\newtheorem{theorem}{Theorem}[section]
\newtheorem{corollary}[theorem]{Corollary}
\newtheorem{proposition}[theorem]{Proposition}
\theoremstyle{definition}
\newtheorem{definition}[theorem]{Definition}
\theoremstyle{definition}
\theoremstyle{definition}
\newtheorem{remark}{Remark}
\theoremstyle{definition}
\newtheorem{assumption}{Assumption}
\renewcommand{\epsilon}{\varepsilon}
\title{Equilibria for Time-inconsistent Singular Control Problems}
\author{Zongxia Liang$^a$\thanks{Email: \texttt{liangzongxia@mail.tsinghua.edu.cn}}\ \ \ \ \  Xiaodong Luo$^a$\thanks{Email: \texttt{luoxd21@mails.tsinghua.edu.cn}}\ \ \ \ \  Fengyi Yuan$^a$\thanks{Email: \texttt{yfy19@mails.tsinghua.edu.cn}}
 }	
\affil{$^a$Department of Mathematical Sciences, Tsinghua University, China}
\numberwithin{equation}{section}
\begin{document}

\maketitle

\begin{abstract}
We study a time-inconsistent singular control problem originating from irreversible reinsurance decisions with non-exponential discount. A novel definition of equilibrium for time-inconsistent singular control problems is introduced. For the problem with non-exponential discount, both sufficient and necessary conditions are derived, providing a thorough mathematical characterization of the equilibrium. Specifically, the equilibrium can be characterized by an extended HJB system, which is a coupled system of non-local parabolic equations with free boundaries. Finally, by showing the existence of classical solutions to the extended HJB system, the existence of equilibria is established under some additional assumptions.
\vskip 10 pt \noindent
{\bf Mathematics Subject Classification:}  49J20, 93E20, 35R35
 \vskip 10pt  \noindent
{\bf Keywords:} Time-inconsistency, Equilibria, Singular control, Free-boundary problem, Irreversible reinsurance
\vskip 5pt\noindent
\end{abstract}

\section{Introduction}
Ever since the research \cite{benevs1980some} on fuel follower problems, the singular control theory has developed rapidly and become a heated topic in stochastic control. In contrast with regular stochastic control, the singular control is a non-decreasing c$\acute{a}$dl$\acute{a}$g process. The monotonicity of singular control makes it suitable for the modeling of many financial and economic problems such as optimal dividend (\cite{chen2021free}, \cite{qiu2023optimal}), irreversible investment (\cite{kogan2001equilibrium}), optimal extraction (\cite{ferrari2021optimal}) and interbank lending (\cite{cont2021interbank}). Recently, the seminal work by \cite{yan2022irreversible} applies the singular control theory to an irreversible reinsurance problem. 
In this formulation, there is an insurance company deciding on a reinsurance strategy. The insurance company's risk exposure is the state variable representing the insured loss that the company is faced with. The risk exposure is stochastic according to the Cram\'{e}r–Lundberg risk model (\cite{cramer1930mathematical}, \cite{lundberg1926forsakringsteknisk}) of the insured loss, which has a diffusion approximation (see \cite{luo2008reinsurance} and references therein). Reinsurance purchases can lower the company's risk exposure at a cost and are irreversible. The accumulated lowered risk exposure exhibits a monotone feature and can be presented as a singular control. The company's objective is to minimize the combination of the running risk loss, the terminal risk loss and the reinsurance cost over a finite-time horizon.

To the best of our knowledge, despite extensive literature on singular control problems, there is no general framework to accommodate time-inconsistency, which has drawn more and more attention in recent years. To tackle this, we study a general singular control problem with non-exponential discount, which includes aforementioned irreversible reinsurance problems as special cases.
Led by \cite{strotz1973myopia}, there is fruitful research on time-inconsistent problems including time-inconsistent regular control and time-inconsistent stopping (\cite{hu2012time}, \cite{huang2021strong}, \cite{huang2020optimal}). In this paper, we focus on singular control problems and only consider the time-inconsistency brought by non-exponential discount, which is one of the major causes of time-inconsistency in existing literature (\cite{ekeland2008investment}, \cite{harris2001dynamic}). However, our definition of equilibrium can readily be applied to other types of time-inconsistency in singular control problems. 

As in some existing time-inconsistent control literature, we aim to find time-consistent equilibrium solutions. However, bringing together the singular control and time-inconsistency, even the definition of equilibrium is not straightforward. The classical approach to defining equilibrium in time-inconsistent regular control problems is to compare the candidate control and the perturbed control (see \cite{bjork2021time}). For singular control problems, it is difficult to define the perturbed control with the needed properties. In the existing literature, there are two major approaches. The first one is to constrain the dividend process as integral of the dividend rate process and apply the standard approach for regular control problems (\cite{li2016equilibrium}, \cite{zhao2014dividend}). Essentially, this reduces to a time-inconsistent regular control problem. The second approach is to seek an intra-personal sub-game perfect equilibrium (\cite{chen2014optimal}, \cite{zhu2020singular}). Nevertheless, this approach requires discretization of future selves and an additional parameter for the exponential distribution of future selves' duration. As a result, the formulation is completely different from that of time-inconsistent regular control problems. Thus, there is limited literature studying time-inconsistent singular control problems in a way parallel to regular control problems. As an exception, \cite{christensen2022moment} defines an equilibrium by the classical first-order condition under perturbation. However, their results crucially rely on the structure of dividend problems, and due to some dividend constraints, the strategies are limited to impulse control type. We make contributions in this direction by proposing a novel definition of equilibrium. In our formulation, \textit{singular controls} are generated from \textit{singular control laws} by solving the Skorohold reflection problems, and the singular control laws are determined by waiting and purchasing regions; see Definitions \ref{ads} and \ref{adml}. The equilibria are then defined to be those singular control laws that are locally optimal under certain perturbations. Notably, the perturbations are considered on the generated singular controls, instead of the laws themselves; see Definitions \ref{equil}. 

Mathematical characterizations of the equilibrium possess features from both key aspects of our model: the singular control formulation and the time-inconsistency. {\it First}, the jump of singular control requires special treatment on terminal conditions, which corresponds to the terminal optimality of equilibrium.  {\it Second}, the resulting characterizing system consists of two coupled non-local parabolic PDEs with free boundaries. Importantly, global regularity is not required in our verification theorem, which paves the way for our existence result.

Another major contribution of this paper is the existence of equilibria in a fairly general setting. The proof is based on the existence of solutions to the system characterizing the equilibrium. To the best of our knowledge, this is the first theoretical existence result of equilibrium solutions to a singular control problem with time-inconsistency. Usually, time-inconsistency requires a so-called auxiliary value function to be solved simultaneously with the value function, making the extended HJB equations a coupled system. However, with singular control problems, not only the functions but also the regions are coupled, and these regions are endogenously determined by the solution; see $W$ and $P$ in (\ref{HJB}). All these lead to a complicated coupled system with non-linearity, non-locality and free boundaries. To address these challenges, we decouple the system into two sub-problems and apply the fixed point theorem to obtain a solution to the original problem. Two sub-problems are studied using dedicated bounded region approximation, penalty method and Sobolev estimations; see Subsections \ref{subproblem1} and \ref{subproblem2}. A key point of our proof is a tailor-made function space, so that the existence and certain uniqueness of sub-problems are guaranteed; see $\mathcal{R}_0$ in Subsection \ref{subproblem1}. {\it A priori} estimations are obtained by-products, which in turn validate the use of Schauder's fixed point theorem; see Subsection \ref{fixpoint}.   

We would like to address several key differences between the formulation of this paper and that of \cite{yan2022irreversible}, which is the most related literature. First, the problem in \cite{yan2022irreversible} is time-consistent while the current work is time-inconsistent due to the non-exponential discount function. This brings about substantial technical challenges in our paper, especially when proving the existence of equilibria; see the constraint $\mathcal{R}_0$ on $f$ in Subsection \ref{subproblem1}, as well as the a priori estimate of $f$ in Theorem \ref{exist-f}. Second, \cite{yan2022irreversible} studies an infinite-time problem while the time horizon of this work is finite. This feature leads to a system of PDEs instead of ODEs, and the latter admits closed-form solutions in \cite{yan2022irreversible}. Another difference is that the singular control in \cite{yan2022irreversible} has an upper bound $\bar{y}$ representing the risk retention level, which makes it a finite-fuel problem. The current work does not consider this constraint.

The rest of the paper is organized as follows. In Section \ref{modfor}, we state the setting of our time-inconsistent singular control problem and introduce the definition of equilibrium. Sufficient conditions are established in Section \ref{suf} while necessary conditions are in Section \ref{nec}. Finally, in Section \ref{exi}, with some additional assumptions, we prove the existence of equilibria. Section \ref{concl} concludes the paper.

\section{Model Formulation}
\label{modfor}
\subsection*{Notation}
$C^{i}(U)$ is the space of functions on $U$ that are $i$th order continuously differentiable. $C^{i,j}(U), C^{i,j,k}(U), C^{i,j,k,l}(U)$ can be defined similarly if the functions have more than one variable. $\overline{U}$ is the closure of $U$. $C^{k+\alpha,\frac{k+\alpha}{2}}(\overline{U})$ denotes the standard H{\"o}lder space and $C^{k+\alpha,\frac{k+\alpha}{2}}(U)$ denotes the space of all functions $u\in C^{k+\alpha,\frac{k+\alpha}{2}}(\overline{W})$ for some $W$ such that $\overline{W}\subset U$. $W^{2,1}_{p}(U)$ is the standard Sobolev space.
\subsection{Problem setting}
Let $(\Omega,\mathcal{F},\{\mathcal{F}_{t}\}_{t\ge 0}, \mathbb{P})$ be a filtered probability space satisfying the usual conditions and $\{B_{t}\}_{t\ge 0}$ be a standard one-dimensional Brownian motion on this space.

The state process $\{X^{\xi}_{t}\}_{t\in[0,T]}$ follows the general stochastic differential equation
\begin{displaymath}
\left\{
\begin{array}{l}
dX^{\xi}_{t}=\mu(X^{\xi}_{t},t,\xi_{t})dt+\sigma(X^{\xi}_{t},t,\xi_{t})dB_{t}-d\xi_{t},\ t\in[0,T],\\
X^{\xi}_{0-}=x_{0},
\end{array}
\right.
\end{displaymath}
where $\xi=\{\xi_{t}\}_{t\ge 0}$ is the singular control which is non-decreasing and c$\acute{a}$dl$\acute{a}$g with initial  $\xi_{0-}=0$, and the drift function $\mu$, the diffusion function $\sigma$ are exogenously given and deterministic.

In the irreversible reinsurance problem, $\{X^{\xi}_{t}\}_{t\in[0,T]}$ represents the insurance company's risk exposure controlled by reinsurance strategy $\xi$, and $\xi_{t}$ represents the accumulated amount of risk exposure covered by reinsurance up to time $t$.

The optimization problem is to find a singular control $\xi$ that minimizes the objective
\begin{align}
\label{object}
&J(x,t,y;\xi):=\mathbb{E}_{x,t,y}\bigg[\int_{t}^{T}\beta(r-t)H(X^{\xi}_{r},r,\xi_{r})dr+\beta(T-t)F(X^{\xi}_{T},\xi_{T})\\&+\int_{t}^{T}\beta(r-t)c(X^{\xi}_{r-},r,\xi_{r-})d\xi_{r}\bigg],
(x,t,y)\in\mathcal{Q}:=\mathbb{R}\times[0,T]\times(\mathbb{R_{+}}\cup\{0\}),\notag
\end{align}
where $\mathbb{E}_{x,t,y}$ denotes the expectation conditioning on $X_{t-}=x,\xi_{t-}=y$ and the last integral is defined by
\begin{align}
\label{jumpdef}
\int_{t}^{T}\beta(r-t)c(X^{\xi}_{r-},r,\xi_{r-})d\xi_{r}:=&\int_{t}^{T}\beta(r-t)c(X^{\xi}_{r},r,\xi_{r})d\xi^{c}_{r}\\&+\sum\limits_{r\in[t,T]}\beta(r-t)c(X^{\xi}_{r-},r,\xi_{r-})\Delta\xi_{r},\notag
\end{align}
with $\xi^{c}_{r}$ and $\Delta\xi_{r}:=\xi_{r}-\xi_{r-}$ being the continuous and discrete parts of $\xi_{r}$.

The objective functional (\ref{object}) takes a general form. The first term with running loss function $H$ is a penalty term penalizing high level of state process during the time horizon. The second term with terminal loss function $F$ penalizes the remaining level of state process at terminal time. The last term with cost function $c$ is the cost of controlling the state process. Future penalty and cost are discounted by the discount function $\beta$. The functions $H, F$ and $c$ are exogenously given continuous and deterministic functions bounded from below. We assume without loss of generality that the lower bounds are zeros. In addition, we assume $c\in C^{1,0,1}(\mathcal{Q}), F\in C^{2,2}(\mathbb{R}\times(\mathbb{R_{+}}\cup\{0\}))$ and that $\beta$ is in $C^{1}([0,T])$, non-increasing with $\beta(0)=1$. 

In irreversible reinsurance, the above optimization problem stands for finding a reinsurance strategy to minimize a combination of running risk loss, terminal risk loss and reinsurance cost.

Note that several successive immediate jump of $\xi$ and a lump-sum jump of $\xi$ at the same time point should result in the same objective value. Hence, at any time $t$, $\xi$ is allowed to jump at most once. In addition, the cost rate  should be assumed to keep the same during the jump, i.e.,
\begin{equation*}
c(x-a,t,y+a)=c(x,t,y), \forall(x,t,y)\in\mathcal{Q},\forall a\ge 0,
\end{equation*}
or equivalently,
\begin{equation}
\label{c-ass1}
c_{x}(x,t,y)=c_{y}(x,t,y),\forall(x,t,y)\in\mathcal{Q}.
\end{equation}

Otherwise, in the irreversible reinsurance problem, compared to a lump-sum purchase, several successive immediate reinsurance purchases at the same time point may result in different reinsurance costs.

Moreover, the optimization problem starting at $T$, which is to minimize
\begin{displaymath}
F(x-a,y+a)+c(x,T,y)a
\end{displaymath}
over $a\in[0,+\infty)$, should be well posed. We assume that the function $a\mapsto F(x-a,y+a)+c(x,T,y)a$ is strictly convex with a finite minimum point. 

\subsection{Admissible strategy}
As the discount function might not be exponential, time-inconsistency is involved. We begin by recalling the classical definition for singular control problems. Then, to deal with time-inconsistency, we introduce our novel definition of admissible strategy, or precisely \emph{admissible singular control law}. Our novel definition (Definition \ref{adml}) plays a crucial role in the next subsection where we define the equilibrium strategy.

For a singular control problem without time-inconsistency, the classical definition of an admissible singular control is as follows.

\begin{definition}
	\label{ads}
	Given initial $(x,t,y)\in\mathcal{Q}$, suppose that $\{\xi_{r}\}_{r\in[t,T]}$ is an $\{\mathcal{F}_{r}\}_{r\in[t,T]}$-adapted non-decreasing c$\acute{a}$dl$\acute{a}$g process satisfying\\
	(a) $\xi_{t-}=y$ and the stochastic differential equation
	\begin{equation*}
	\left\{
	\begin{array}{l}
dX^{\xi}_{r}=\mu(X^{\xi}_{r},r,\xi_{r})dr+\sigma(X^{\xi}_{r},r,\xi_{r})dB_{r}-d\xi_{r},\ r\in[t,T],\\
	X^{\xi}_{t-}=x.
	\end{array}
	\right.
	\end{equation*}
	has a unique strong solution $\{X^{\xi}_{r}\}_{r\in[t,T]}$.\\
	(b) The $\{\xi_{r}\}_{r\in[t,T]}$ and $\{X^{\xi}_{r}\}_{r\in[t,T]}$ given in (a) satisfy
	\begin{align*}
	&\mathbb{E}_{x,t,y}\int_{t}^{T}\beta(r-t)H(X^{\xi}_{r},r,\xi_{r})dr<\infty,\\
	&\mathbb{E}_{x,t,y}F(X^{\xi}_{T},\xi_{T})<\infty,\\
	&\mathbb{E}_{x,t,y}\int_{t}^{T}\beta(r-t)c(X^{\xi}_{r-},r,\xi_{r-})d\xi_{r}<\infty.
	\end{align*}
	Then we call $\{\xi_{r}\}_{r\in[t,T]}$ an admissible singular control on $[t,T]$. We denote the set of all admissible singular controls on $[t,T]$ by $\mathcal{D}_{[t,T]}$.
\end{definition}

Because of time-inconsistency, the above definition is not suitable. Unlike regular control, the strategy corresponding to a singular control is reflected by the increase or jump rather than the value of the control. Therefore, we need an alternative way to present the strategy. Inspired by the fact that the solution of an optimal singular control problem usually relates to a Skorohod reflection problem, we introduce the following definition.

\begin{definition}
\label{adml}
Suppose that $\Xi:=(W^{\Xi},P^{\Xi})$ is a division of $\mathcal{Q}$. $W^{\Xi}$ is an open set denoting the \emph{waiting region} and $P^{\Xi}$ is a closed set denoting the \emph{purchasing region}. We call $\Xi$ an \emph{admissible singular control law} if the following (a)$\sim$(c) hold.\\
    (a) Given any initial $(x,t,y)\in\mathcal{Q}$, the Skorohod reflection problem
	\begin{equation}
	\left\{
	\begin{array}{l}
	dX^{\xi}_{r}=\mu(X^{\xi}_{r},r,\xi_{r})dr+\sigma(X^{\xi}_{r},r,\xi_{r})dB_{r}-d\xi_{r},\ r\in[t,T],\\
	(X^{\xi}_{r},r,\xi_{r})\in\overline{W^{\Xi}}, r\in[t,T],\\
	\xi_{r}=y+\int_{t}^{r}1_{\{(X^{\xi}_{u},u,\xi_{u})\in P^{\Xi}\}}d\xi_{u},\ r\in[t,T],\\ 
	X^{\xi}_{t-}=x,\ \xi_{t-}=y.
	\end{array}
	\right.
	\label{dynamic1}
	\end{equation}
 
	has a unique strong solution $(X^{\xi},\xi):=(X^{x,t,y,\Xi},\xi^{x,t,y,\Xi})$. We call $\xi$ the \emph{singular control generated by $\Xi$ at $(x,t,y)$}.\\
	(b) The $\{\xi_{r}\}_{r\in[t,T]}$ and $\{X^{\xi}_{r}\}_{r\in[t,T]}$ given in (a) satisfy the same conditions in (b) of Definition \ref{ads}.\\
	(c) The $\{\xi_{r}\}_{r\in[t, T]}$ given in (a) has the property that, for any $\tau\in[t, T)$, as $h\rightarrow 0$,
 \begin{equation*}
\sum\limits_{r\in(\tau,\tau+h)}\Delta\xi_{r}=o(h),a.s..
\end{equation*}
\end{definition}

In Definition \ref{adml}, the properties (a) and (b) are parallel to (a) and (b) of Definition \ref{ads}. The property (c) allows $\xi$ with infinite jumps but the overall size of jumps in an infinitesimal open interval is constrained.

	The definition of admissible singular control law corresponds to the whole state-time-control space $\mathcal{Q}$. It is different from admissible control (Definition \ref{ads}) which is defined for each initial $(x,t,y)$ in $\mathcal{Q}$. However, there are some relations. The singular control generated by an admissible singular law at $(x,t,y)$ is indeed an admissible singular control on $[t,T]$. Hence, an admissible singular control law corresponds to a certain combination of admissible singular controls by varying $(x,t,y)$.

\subsection{The equilibrium singular control laws}
 In this subsection, based on the novel concept of control law introduced in Definition \ref{adml}, we extend the classical definition of equilibrium for regular control problems to singular control problems.

The well-known classical definition of equilibrium for regular control problems is introduced by \cite{ekeland2010golden}. In the classical definition, if $\hat{u}$ is the equilibrium control, then the classical definition of perturbed control $u^{h}$ is 
\begin{equation*}
u^{h}(r)=\left\{
\begin{array}{l}
u(r),\ r\in[t,t+h),\\
\hat{u}(r),\ r\in[t+h,\infty),
\end{array}
\right.
\end{equation*}
where $u$ is an arbitrary admissible control. The core step in the classical definition is to compare the objectives of the equilibrium control and the perturbed control.

However, the classical definition of perturbed control $u^{h}$ does not work for singular control problems. The classical definition replicates the values of the candidate equilibrium control $\hat{u}$ after the perturbation period $[t,t+h)$. For regular control, this means that the equilibrium strategy is followed. However, this is not the case for singular control. The main reason is that, for singular control, the pattern of increase of the control process $\xi$, rather than its values, is important. Therefore, simply replicating the values of the control process does not mean following the same strategy. Besides, the major technical obstacle with classical definition is that one cannot simply apply the It\^{o} calculus to derive an extended HJB system.

With Definition \ref{adml}, we can alternatively construct the perturbed control as follows. Consider a time horizon $[t,T]$ with $t<T$ and let $h$ be an infinitesimal time variable, and then\\
{\bf Step 1.} Impose a perturbation in $[t,t+h)$ by replacing the benchmark singular control with arbitrary admissible singular control.\\
{\bf Step 2.} At time $t+h$, let the candidate equilibrium singular control law generate the singular control on $[t+h,T]$.\\
{\bf Step 3.} Let the perturbed singular control on $[t,T]$ be the combination of  the controls on $[t,t+h)$ and $[t+h,T]$.

The constructed perturbed control for singular control problems is shown in (\ref{xih0}). Compared with the classical definition of perturbed control, the only but crucial difference appears in {\bf Step 2}. In {\bf Step 2}, we do not replicate the values of the benchmark control. Instead, we let the candidate equilibrium control law generate the control, which indeed means following the equilibrium strategy.

Another point to mention is that we need to require terminal-time optimality, i.e. condition (b) in Definition \ref{equil}, which is also the limiting case $t\uparrow T$ of the above argument. It is a special feature of our singular control model, which is different from regular control where the control at the terminal time makes no difference.

To sum up, we introduce the following definition.

\begin{definition}
	\label{equil}
	Let $\hat{\Xi}$ be an admissible singular control law. We call $\; \hat{\Xi}$ an \emph{ equilibrium singular control law} and $J(x,t,y;\xi^{x,t,y,\hat{\Xi}})$ the corresponding \emph{equilibrium value function} at $(x,t,y)$ if\\	
	(a) For any initial $(x,t,y)\in\mathcal{Q}$ with $t<T$, any $\eta\in\mathcal{D}_{[t,T]}$ with $\eta_{t-}=y$, we have
	\begin{displaymath}
	\liminf\limits_{h\rightarrow 0}\frac{J(x,t,y;\xi^{h})-J(x,t,y;\xi^{x,t,y,\hat{\Xi}})}{h}\ge 0,
	\end{displaymath}
	where $\xi^{h}, h\in(0,T-t)$, is defined by
	\begin{equation}
 \label{xih0}
	\xi^{h}_{r}=\left\{
	\begin{array}{l}
	\eta_{r}, \ r\in[t,t+h),\\
	\xi^{X^{\eta}_{(t+h)-},t+h,\eta_{(t+h)-},\hat{\Xi}}_{r}, \ r\in[t+h,T].
	\end{array}
	\right.
	\end{equation}
	(b) For any initial $(x,t,y)\in\mathcal{Q}$ with $t=T$, any $\eta\in\mathcal{D}_{[T,T]}$ with $\eta_{T-}=y$, we have
	\begin{displaymath}
	J(x,t,y;\eta)-J(x,t,y;\xi^{x,t,y,\hat{\Xi}})\ge 0.
	\end{displaymath}
\end{definition}

\begin{remark}
\label{xih}
$\xi^{h}$ in (a) is not necessarily an admissible singular control on $[t,T]$. However, $\xi^{h}$ satisfies all conditions of Definition \ref{ads} except the integrability condition (b). Actually, 
\begin{equation*}
X^{\xi_{h}}_{r}=\left\{
\begin{array}{l}
X^{\eta}_{r}, \ r\in[t,t+h),\\
X^{X^{\eta}_{(t+h)-},t+h,\eta_{(t+h)-},\hat{\Xi}}_{r}, \ r\in[t+h,T].
\end{array}
\right.
\end{equation*}
is the unique strong solution of (\ref{dynamic1}) with $\xi$ replaced by $\xi^{h}$. Hence, $J(x,t,y;\xi^{h})$ is well defined but can equal $+\infty$. As $J(x,t,y;\xi^{x,t,y,\hat{\Xi}})$ is finite by the admissibility of $\xi^{x,t,y,\hat{\Xi}}$, Definition \ref{equil} is well posed.
\end{remark}

The equilibrium singular control law defined in Definition \ref{equil} corresponds to the concept of weak equilibrium. The singular control version of the so-called strong equilibrium (see \cite{huang2021strong}) can also be defined accordingly. Moreover, sufficient and necessary conditions for strong equilibrium can be derived using the Taylor expansion approach in \cite{huang2021strong}. Limited to the scope of this paper, we omit the details. We also remark that, Definition \ref{equil} can be applied to other kinds of time-inconsistent singular control problems, not limited to non-expoential discount.

The following two sections aim to give some mathematical characterizations of the equilibrium in Definition \ref{equil}.

\begin{remark}
For simplicity, we mean ``equilibrium singular control law" when we use the term ``equilibrium".
\end{remark}

\section{Sufficient Conditions}
\label{suf}
In this section, we establish the verification theorem (Theorem \ref{verif}) which gives sufficient conditions for equilibrium.

\begin{theorem}[Verification Theorem]
	\label{verif}
	Given a function $V(x,t,y)$ and a family of functions $\{f^{s}(x,t,y)\}_{s\in[0,t]}$, define $f(x,t,y,s):=f^{s}(x,t,y)$, $(\mathcal{A}\varphi)(x,t,y):=\varphi_{t}(x,t,y)+\mu(x,t,y)\varphi_{x}(x,t,y)+\frac{1}{2}\sigma(x,t,y)^{2}\varphi_{xx}(x,t,y),\forall\varphi:\mathcal{Q}\rightarrow\mathbb{R}$ and 
	\begin{align}
	&W:=\{(x,t,y)\in \mathcal{Q}|c(x,t,y)-V_{x}(x,t,y)+V_{y}(x,t,y)>0\},\label{feed-w}\\
	&P:=\{(x,t,y)\in \mathcal{Q}|c(x,t,y)-V_{x}(x,t,y)+V_{y}(x,t,y)=0\}.\label{feed-p}
	\end{align}
	Assume the following properties:\\
	(1) 
	\begin{align*}
		&V\in C^{2,1,1}(\mathcal{Q}\cap\{(x,t,y)|t<T\})\bigcap C(\mathcal{Q}),\\
		&f^{s}\in C^{2,1,1}(\overline{W}\cap\{(x,t,y)|t<T\})\bigcap C^{1,0,1}(P\cap\{(x,t,y)|t<T\})\bigcap C(\mathcal{Q}).
	\end{align*}
	(2) $V(x,t,y)$ and $ f^{s}(x,t,y)$ satisfy
	\begin{align}
	&\min\Big\{(\mathcal{A}V)(x,t,y)+H(x,t,y)-(\mathcal{A}f)(x,t,y,t)+(\mathcal{A}f^{t})(x,t,y) \label{v1}\\
	&\qquad\qquad\qquad,c(x,t,y)-V_{x}(x,t,y)+V_{y}(x,t,y)\Big\}=0,\  \forall t<T,\notag\\
	&V(x,T,y)=\tilde{F}(x,y),\label{v4}\\
	&(\mathcal{A}f^{s})(x,t,y)+\beta(t-s)H(x,t,y)=0, \forall (x,t,y)\in \overline{W}, \ t<T,s\in[0,t],\label{v5}\\
	&\beta(t-s)c(x,t,y)-f_{x}^{s}(x,t,y)+f_{y}^{s}(x,t,y)=0,\forall (x,t,y)\in P, \ t<T,s\in[0,t],\label{v6}\\
	&f^{s}(x,T,y)=\beta(T-s)\tilde{F}(x,y).\label{v7}
	\end{align}
	for all $(x,t,y)\in\mathcal{Q}$, where
	\begin{displaymath}
	\tilde{F}(x,y):=\min\limits_{a\ge 0}\{F(x-a,y+a)+c(x,T,y)a\},\ (x,y)\in\mathbb{R}\times(\mathbb{R_{+}}\cup\{0\}).
	\end{displaymath}
	(3) $\hat{\Xi}:=(W,P)$ is an admissible singular control law.\\
	(4) For $\varphi=V,f^{s}$, $(x,t,y)\mapsto f(x,t,y,t)$ and for any $\xi\in\mathcal{D}_{[t,T]}$,
	\begin{equation}
	\label{v9}
	\mathbb{E}_{x,t,y}\int_{t}^{T}\Big[\varphi_{x}(X_{r}^{\xi},r,\xi_{r})\sigma(X_{r}^{\xi},r,\xi_{r})\Big]^{2}dr<\infty.
	\end{equation}
	Then $\hat{\Xi}=(W,P)$ is an equilibrium singular control law and $V$ is the corresponding equilibrium value function. Moreover, $f$ has the probabilistic interpretation
	\begin{align}
	\label{intf}
f(x,t,y,s)=&\mathbb{E}_{x,t,y}\bigg[\int_{t}^{T}\beta(r-s)H(X_{r}^{\hat{\xi}},r,\hat{\xi}_{r})dr+\beta(T-s)F(X_{T}^{\hat{\xi}},\hat{\xi}_{T})\\&+\int_{t}^{T}\beta(r-s)c(X_{r-}^{\hat{\xi}},r,\hat{\xi}_{r-})d\hat{\xi}_{r}\bigg],\notag
	\end{align}
	where $\hat{\xi}:=\xi^{x,t,y,\hat{\Xi}}$ is the singular control generated by $\hat{\Xi}$ at $(x,t,y)$ and $X^{\hat{\xi}}:=X^{x,t,y,\hat{\Xi}}$ is the corresponding state process.
\end{theorem}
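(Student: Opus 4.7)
The proof proceeds in three stages: (i) establish the probabilistic representation (\ref{intf}); (ii) identify $V(x,t,y) = f(x,t,y,t) = J(x,t,y;\hat\xi)$; and (iii) verify the two conditions in Definition \ref{equil}. Throughout, the notation in (\ref{v1}) is read as follows: $(\mathcal{A}f)(x,t,y,t)$ treats the two $t$-slots as the same variable, so the $\partial_t$ inside $\mathcal{A}$ uses the chain rule and picks up both $f_t$ and $f_s$, while $(\mathcal{A}f^t)(x,t,y)$ treats the $t$ in the superscript as a fixed discount parameter. Hence $-(\mathcal{A}f)(x,t,y,t) + (\mathcal{A}f^t)(x,t,y) = -f_s(x,t,y,t)$, and (\ref{v1}) reads $\min\{\mathcal{A}V + H - f_s(\cdot,t),\; c - V_x + V_y\} = 0$.

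\textbf{Stages (i) and (ii).} For (i), apply It\^{o}'s formula to $f^s(X^{\hat\xi}_r, r, \hat\xi_r)$ along the equilibrium trajectory. By Definition \ref{adml}, $(X^{\hat\xi},\cdot,\hat\xi) \in \overline W$ and $\hat\xi$ increases only in $P$, so (\ref{v5}) reduces the drift to $-\beta(r-s)H\,dr$; (\ref{v6}) on $\partial W \cap P$ turns the continuous-$\hat\xi^c$ contribution into $-\beta(r-s)c\,d\hat\xi^c$; and each jump, traveling in direction $(-1,1)$ through $P$, contributes $-\beta(r-s)c(X_{r-},r,\hat\xi_{r-})\Delta\hat\xi_r$ (integrating (\ref{v6}) along the line with $c$ constant by (\ref{c-ass1})). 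The terminal condition (\ref{v7}) together with the definition of $\tilde F$ handles any terminal jump, and (\ref{v9}) makes the Brownian integral a true martingale, so taking expectations yields (\ref{intf}); in particular, $f(x,t,y,t) = J(x,t,y;\hat\xi)$. For (ii), apply It\^{o} analogously to $V$ along $\hat\xi$ using the equalities in (\ref{v1}) along the equilibrium trajectory (first entry in $\overline W$, second on $P$) and (\ref{c-ass1}); one obtains
\[
V(x,t,y) = \mathbb{E}[F(X^{\hat\xi}_T, \hat\xi_T)] + \mathbb{E}\!\int_t^T\!\![H - f_s(X^{\hat\xi}_r, r, \hat\xi_r, r)]\,dr + \mathbb{E}\!\int_t^T\!\! c\, d\hat\xi_r.
\]
A Fubini computation, obtained by differentiating (\ref{intf}) in $s$ and swapping the order of integration in the $r$-integral, gives $\mathbb{E}\!\int_t^T f_s\,dr = \mathbb{E}\!\int_t^T(1-\beta(u-t))H\,du + \mathbb{E}(1-\beta(T-t))F + \mathbb{E}\!\int_t^T(1-\beta(u-t))c\,d\hat\xi_u$. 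Substituting converts the display above into (\ref{intf}) at $s = t$, so $V = f(\cdot,\cdot,\cdot,t) = J(\cdot;\hat\xi)$.

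\textbf{Stage (iii).} For condition (a), fix $(x,t,y)$ with $t < T$ and $\eta \in \mathcal{D}_{[t,T]}$ with $\eta_{t-} = y$. Using (\ref{intf}) with $s = t$ on the $\hat\Xi$-generated continuation after $t+h$,
\[
J(x,t,y;\xi^h) = \mathbb{E}\!\left[\int_t^{t+h}\!\!\beta(r-t)H\,dr + \int_t^{t+h}\!\!\beta(r-t)c\,d\eta_r + V\!\left(X^\eta_{(t+h)-}, t+h, \eta_{(t+h)-}\right)\right].
\]
Apply It\^{o}'s formula to $V$ along $X^\eta$ on $[t,(t+h)-]$. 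The global $C^{2,1,1}$-regularity of $V$ on $\mathcal{Q}\cap\{t<T\}$ is essential here, since $\eta$ may enter the interior of $P$ where $f^s$ is only $C^{1,0,1}$. Using the one-sided inequalities $\mathcal{A}V + H - f_s(\cdot,t) \geq 0$ and $V_y - V_x \geq -c$ from (\ref{v1}), together with (\ref{c-ass1}) giving $\sum \Delta V \geq -\sum c\Delta\eta$, and (\ref{v9}) for the Brownian integral,
\[
J(x,t,y;\xi^h) - V(x,t,y) \geq \mathbb{E}\!\int_t^{t+h}\!\![\beta(r-t)-1](H\,dr + c\,d\eta_r) + \mathbb{E}\!\int_t^{t+h}\!\! f_s(X^\eta_r, r, \eta_r, r)\,dr.
\]
Dividing by $h$ and letting $h \to 0^+$: the first term is $o(1)$ via $|\beta(r-t)-1| \leq \|\beta'\|_\infty h$ and the admissibility integrability of $\mathbb{E}\int H$ and $\mathbb{E}\int c\,d\eta$; the second converges to $f_s(x,t,y,t) \geq 0$ (non-negativity from differentiating (\ref{intf}) in $s$ with $\beta$ non-increasing). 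Hence $\liminf (J(\xi^h) - V(x,t,y))/h \geq 0$. Condition (b) at $t = T$ is immediate: every admissible $\eta$ with $\eta_{T-} = y$ is a single jump $a = \eta_T - y \geq 0$, so $J(x,T,y;\eta) = F(x-a,y+a) + c(x,T,y)a \geq \tilde F(x,y) = V(x,T,y)$ by the strict-convexity assumption, attained at the jump $\hat\Xi$ performs at $T$.

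\textbf{Main obstacle.} The heaviest lifting is in stage (ii), where the $f_s$-integral along the equilibrium trajectory must be made to telescope cleanly into the discount corrections via Fubini, and in stage (iii), where the perturbation argument must be carried out for an arbitrary admissible $\eta$ that may traverse the interior of $P$ and produce jumps of arbitrary size. The latter relies critically on the global $C^{2,1,1}$-smoothness of $V$ (in contrast to $f^s$, which is only $C^{2,1,1}$ on $\overline W$), on (\ref{c-ass1}) to convert the jump integrals of $V_y - V_x$ into $-c\Delta\eta$, on the correct chain-rule interpretation of the non-local correction in (\ref{v1}), and on the uniform-in-$\eta$ vanishing of the discount-correction terms as $h \to 0$.
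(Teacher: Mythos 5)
Your Stages (i) and (ii) are sound; Stage (ii) in particular replaces the paper's comparison of the It\^{o} expansions of $V$ and of $(x,t,y)\mapsto f(x,t,y,t)$ (via $(\mathcal{A}V)=(\mathcal{A}f)(\cdot,t)$ on $\overline{W}$) with a Fubini/tower-property computation on (\ref{intf}), which is a legitimate and arguably more self-contained route. The problem is in Stage (iii)(a), and it is exactly the time-inconsistency pitfall the theorem is built to avoid. You correctly say ``using (\ref{intf}) with $s=t$ on the continuation after $t+h$'', but then write the continuation value as $V\bigl(X^{\eta}_{(t+h)-},t+h,\eta_{(t+h)-}\bigr)=f\bigl(X^{\eta}_{(t+h)-},t+h,\eta_{(t+h)-},t+h\bigr)$. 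The correct continuation value, since the time-$t$ self discounts with $\beta(\cdot-t)$, is $f\bigl(X^{\eta}_{(t+h)-},t+h,\eta_{(t+h)-},t\bigr)$; these differ by $\int_{t}^{t+h}f_{s}(\cdot,u)\,du$, which is $O(h)$, not $o(h)$. This error is not cosmetic: your final bound $\liminf_{h}\,h^{-1}\bigl(J(\xi^{h})-J(\hat\xi)\bigr)\ge f_{s}(x,t,y,t)$ fails the sanity check $\eta=\hat\xi$, for which $\xi^{h}=\hat\xi$ and the quotient is identically $0$ while $f_{s}(x,t,y,t)$ is in general strictly positive. The omitted term contributes exactly $-f_{s}$ in the limit and cancels the $+f_{s}$ you retain. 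The paper's Step 3.1 avoids this by keeping $f(\cdot,t)$ as the continuation value and applying It\^{o} to $f^{t}$ along $\eta$ on $[t,t+h)$; the relevant consequence of (\ref{v1}) is then $(\mathcal{A}f^{t})+H\ge 0$ (no leftover $f_{s}$), together with $c-f^{t}_{x}+f^{t}_{y}=c-V_{x}+V_{y}\ge 0$ for the $d\eta^{c}$ and jump terms, and Fatou's lemma. Your insistence on the global $C^{2,1,1}$ regularity of $V$ is also misplaced for this step: the paper instead uses the regularity of $(x,t,y)\mapsto f(x,t,y,t)=V(x,t,y)$, i.e.\ of $f^{t}$, which assumption (1) only grants on $\overline{W}$ and $P$ separately, so the It\^{o}--Tanaka--Meyer form is what is actually invoked.

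A secondary gap: in condition (b) you assert that the minimum of $a\mapsto F(x-a,y+a)+c(x,T,y)a$ is ``attained at the jump $\hat\Xi$ performs at $T$'', but the law's terminal action is dictated by the sets $W,P$ defined through $V_{x},V_{y}$, i.e.\ through $\tilde F$, not through $F$. One must prove that $c(x,T,y)-\tilde F_{x}+\tilde F_{y}>0$ iff $c(x,T,y)-F_{x}+F_{y}>0$ (the paper's (\ref{cond-b}), proved via the first-order condition and the implicit function theorem); without this, the law could fail to jump, or jump the wrong amount, at $T$.
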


\begin{proof} We prove Theorem \ref{verif} in three steps.
	
	{\bf Step 1.} We show that $f$ has the interpretation (\ref{intf}) and $C^{1}$ in $s$.\\
	Applying It\^{o}-Tanaka-Meyer's formula (see Sect.4 of \cite{cherny2001principal}) to $f^{s}$, we obtain for $t\in[s,T]$,
	\begin{align}
	f^{s}(X^{\hat{\xi}}_{T},T,\hat{\xi}_{T})\!-&\!f^{s}(x,t,y)=\int_{t}^{T}(\mathcal{A}f^{s})(X^{\hat{\xi}}_{r},r,\hat{\xi}_{r})dr\!+\!\int_{t}^{T}f^{s}_{x}(X^{\hat{\xi}}_{r},r,\hat{\xi}_{r})\sigma(X^{\hat{\Xi}}_{r},r,\hat{\xi}_{r})dB_{r}\notag\\
 &+\int_{t}^{T}\Big[f^{s}_{y}(X^{\hat{\xi}}_{r-},r,\hat{\xi}_{r-})\!-\!f^{s}_{x}(X^{\hat{\xi}}_{r-},r,\hat{\xi}_{r-})\Big]d\hat{\xi}^{c}_{r}\label{feq}\\
&+\sum\limits_{r\in[t,T]}\int_{0}^{\Delta\hat{\xi}_{r}}\Big[f^{s}_{y}(X^{\hat{\xi}}_{r-}\!-\!u,r,\hat{\xi}_{r-}\!+\!u)\!-\!f^{s}_{x}(X^{\hat{\xi}}_{r-}\!-\!u,r,\hat{\xi}_{r-}\!+\!u)\Big]du.\notag
	\end{align}
Thanks to (\ref{v9}), the second term on the right-hand side is a martingale.
	 As
	\begin{displaymath}
	(x,T,y)\in \overline{W}\Leftrightarrow \mathop{\arg\min}\limits_{a\ge 0}\{F(x-a,y+a)+c(x,T,y)a\}=0\Leftrightarrow \tilde{F}(x,y)=F(x,y)
	\end{displaymath}
	and $(X^{\hat{\xi}}_{T},T,\hat{\xi}_{T})\in \overline{W}$, we have $\tilde{F}(X_{T}^{\hat{\xi}},\hat{\xi}_{T})=F(X_{T}^{\hat{\xi}},\hat{\xi}_{T})$.

 Using (\ref{v5})$\sim$(\ref{v7}), (\ref{c-ass1}) and the last formula, taking expectation on both sides of (\ref{feq}) leads to (\ref{intf}).
	
	Now we show that $f$ is $C^{1}$ in $s$. As $\beta$ is $C^{1}$, the expression $\frac{\beta(r-s-\epsilon)-\beta(r-s)}{\epsilon}$ converges uniformly to $-\beta'(r-s)$ for $r\in[t,T]$ as $\epsilon\rightarrow 0$. Using (\ref{intf}), Dominated convergence theorem and (b) in Definition \ref{adml}, we have
	\begin{align*}
	f_{s}(x,t,y,s)=&\lim\limits_{\epsilon\rightarrow 0}\frac{f(x,t,y,s+\epsilon)-f(x,t,y,s)}{\epsilon}\\
	=&\mathbb{E}_{x,t,y}\bigg[-\int_{t}^{T}\beta'(r-s)H(X_{r}^{\hat{\xi}},r,\hat{\xi}_{r})dr-\beta'(T-s)F(X_{T}^{\hat{\xi}},\hat{\xi}_{T})\\
 &-\int_{t}^{T}\beta'(r-s)c(X_{r-}^{\hat{\xi}},r,\hat{\xi}_{r-})d\hat{\xi}_{r}\bigg].\end{align*}

	As $\beta$ is $C^{1}$, we obtain that $f_{s}(x,t,y,s)$ is continuous in $s$.

	{\bf Step 2.} We show that $V(x,t,y)=f(x,t,y,t)=J(x,t,y,\hat{\xi})$, and thus 
	\begin{displaymath}
		f\in C^{2,1,1,1}\Big(\big\{(x,t,y,s)\big|(x,t,y)\in\mathcal{Q},0\le s\le t<T\big\}\Big).
	\end{displaymath} 

	Similar to {\bf Step 1}, applying It\^{o}-Tanaka-Meyer's formula to $V$, taking expectations and using (\ref{v1})$\sim$(\ref{v4}), (\ref{c-ass1}), we have 
	\begin{equation}
 \label{Vexp}
V(x,t,y)\!=\!\mathbb{E}_{x,t,y}\!\bigg[F(X^{\hat{\xi}}_{T},\hat{\xi}_{T})\!-\!\int_{t}^{T}(\mathcal{A}V)(X^{\hat{\xi}}_{r},r,\hat{\xi}_{r},r)dr\!+\!\int_{t}^{T}c(X^{\hat{\xi}}_{r-},r,\hat{\xi}_{r-})d\hat{\xi}_{r}\bigg].
	\end{equation}
	Again, applying It\^{o}-Tanaka-Meyer's formula to $(x,t,y)\mapsto f(x,t,y,t)$, taking expectations and using (\ref{c-ass1}),  (\ref{v6})$\sim$(\ref{v7}), we obtain 
	\begin{equation}
 \label{fexp}
f(x,t,y,t)\!=\!\mathbb{E}_{x,t,y}\!\bigg[F(X^{\hat{\xi}}_{T},\hat{\xi}_{T})\!-\!\int_{t}^{T}\!(\mathcal{A}f)(X^{\hat{\xi}}_{r},r,\hat{\xi}_{r},r)dr\!+\!\int_{t}^{T}\!c(X^{\hat{\xi}}_{r-},r,\hat{\xi}_{r-})d\hat{\xi}_{r}\bigg].
	\end{equation}
Combination of (\ref{v1}) and (\ref{v5}) implies 
\begin{equation}
\label{av=af}
(\mathcal{A}V)(x,t,y)=(\mathcal{A}f)(x,t,y,t),\ \forall(x,t,y)\in\overline{W}.
\end{equation}
Then, from (\ref{intf}) and (\ref{Vexp})$\sim$(\ref{av=af}), we obtain
	\begin{displaymath}
	V(x,t,y)=f(x,t,y,t)=J(x,t,y,\hat{\xi}).
	\end{displaymath}
	{\bf Step 3.} We prove that $\hat{\Xi}$ is an equilibrium singular control law.
	
	{\bf Step 3.1. Proof of property (a) in Definition \ref{equil}:}
	
	For any $\xi\in\mathcal{D}_{[t,T]}$ with $\xi_{t-}=y$, define
	\begin{align}
 \label{fxi}
	f^{\xi}(x,t,y,s):=&\mathbb{E}_{x,t,y}\Bigg[\int_{t}^{T}\beta(r-s)H(X_{r}^{\xi},r,\xi_{r})dr+\beta(T-s)F(X_{T}^{\xi},\xi_{T})\\
 &+\int_{t}^{T}\beta(r-s)c(X_{r-}^{\xi},r,\xi_{r-})d\xi_{r}\Bigg].\notag
	\end{align}
	In particular, $f=f^{\hat{\xi}}$. Recalling Remark \ref{xih}, $\xi^{h}$ also corresponds to a state process $X^{\xi^{h}}$ and we can define $f^{\xi^{h}}(x,t,y,s)$ just as (\ref{fxi}).
 
 Using the fact that $\xi^{h}=\eta$ on $[t,t+h)$, it follows that
	\begin{align*}
	&J(x,t,y;\xi^{h})
=\mathbb{E}_{x,t,y}\bigg[f^{\xi^{h}}(X^{\eta}_{(t+h)-},t+h,\eta_{(t+h)-},t)+\int_{t}^{t+h}\beta(r-t)H(X^{\eta}_{r},r,\eta_{r})dr\\
&+\int_{t}^{t+h}\beta(r-t)c(X^{\eta}_{r},r,\eta_{r})d\eta^{c}_{r}+\sum\limits_{r\in[t,t+h)}\beta(r-t)c(X^{\eta}_{r-},r,\eta_{r-})\Delta\eta_{r}\bigg].
	\end{align*}
 Using the fact that $f^{\xi^{h}}(X^{\eta}_{(t+h)-},t+h,\eta_{(t+h)-},t)=f(X^{\eta}_{(t+h)-},t+h,\eta_{(t+h)-},t)$ and the last formula, we obtain
	\begin{align}
 \label{divJ}
	&J(x,t,y;\xi^{h})-J(x,t,y;\hat{\xi})\\
&\!=\!\mathbb{E}_{x,t,y}\!\bigg[\!f(X^{\eta}_{(t+h)\!-\!},t+h,\eta_{(t+h)\!-\!},t)\!-\!f(x,t,y,t)\!+\!\int_{t}^{t+h}\!\beta(r\!-\!t)H(X^{\eta}_{r},r,\eta_{r})dr\notag\\&+\int_{t}^{t+h}\beta(r-t)c(X^{\eta}_{r},r,\eta_{r})d\eta^{c}_{r}+\sum\limits_{r\in[t,t+h)}\beta(r-t)c(X^{\eta}_{r-},r,\eta_{r-})\Delta\eta_{r}\bigg].\notag
	\end{align}
	Applying It\^{o}-Tanaka-Meyer's formula to $f^{t}$ and taking expectations, we have
	\begin{align*}
	&\mathbb{E}_{x,t,y}f(X^{\eta}_{(t+h)-},t+h,\eta_{(t+h)-},t)-f(x,t,y,t)\\
	=&\mathbb{E}_{x,t,y}\bigg[\int_{t}^{t+h}(\mathcal{A}f^{t})(X^{\eta}_{r},r,\eta_{r})dr+\int_{t}^{t+h}\Big(f^{t}_{y}(X^{\eta}_{r},r,\eta_{r})-f^{t}_{x}(X^{\eta}_{r},r,\eta_{r})\Big)d\eta^{c}_{r}\\
&+\sum\limits_{r\in[t,t+h)}\int_{0}^{\Delta\eta_{r}}\Big(f^{t}_{y}(X^{\eta}_{r-}-u,r,\eta_{r-}+u)-f^{t}_{x}(X^{\eta}_{r-}-u,r,\eta_{r-}+u)\Big)du\bigg].
	\end{align*}
	Substituting the last equation into (\ref{divJ}), dividing by $h$ and letting $h\rightarrow 0$, we obtain
	\begin{align*}
	&\liminf\limits_{h\rightarrow 0}\frac{J(x,t,y;\xi^{h})-J(x,t,y;\hat{\xi})}{h}\\
	=&\liminf\limits_{h\rightarrow 0}\frac{\mathbb{E}_{x,t,y}\int_{t}^{t+h}\Big[(\mathcal{A}f^{t})(X^{\eta}_{r},r,\eta_{r})+\beta(r-t)H(X^{\eta}_{r},r,\eta_{r})\Big]dr}{h}\\
	&+\liminf\limits_{h\rightarrow 0}\frac{\mathbb{E}_{x,t,y}\int_{t}^{t+h}\Big[\beta(r-t)c(X^{\eta}_{r},r,\eta_{r})+f^{t}_{y}(X^{\eta}_{r},r,\eta_{r})-f^{t}_{x}(X^{\eta}_{r},r,\eta_{r})\Big]d\eta^{c}_{r}}{h}\\
	\ge&0,
	\end{align*}
	where the equality holds due to property (c) of Definition \ref{adml} and the inequality holds due to (\ref{v1}), (\ref{av=af}) and Fatou's lemma.
 
	{\bf Step 3.2. Proof of property (b) in Definition \ref{equil}:}
 
	We just need to prove the singular control generated by $(W,P)$ is optimal for minimizing $J(x,T,y;\xi)$. The minimization problem is equivalent to finding optimal jump $a=\Delta\xi_{T}\ge 0$ to minimize
	\begin{displaymath}
	a\mapsto F(x-a,y+a)+c(x,T,y)a.
	\end{displaymath}
	By the first order condition, the optimal strategy at $T$ is to jump (purchase a positive amount of reinsurance) if and only if 
	\begin{displaymath}
	c(x,T,y)-F_{x}(x,y)+F_{y}(x,y)<0.
	\end{displaymath}
By (\ref{feed-w}), we have
 \begin{align*}
 (x,T,y)\in W\Leftrightarrow&c(x,T,y)-V_{x}(x,T,y)+V_{y}(x,T,y)>0\\\Leftrightarrow&c(x,T,y)-\tilde{F}_{x}(x,y)+\tilde{F}_{y}(x,y)>0.
 \end{align*}
 Hence, proving property (b) of Definition \ref{equil} boils down to showing 
 \begin{equation}
 \label{cond-b}
 c(x,T,y)-F_{x}(x,y)+F_{y}(x,y)>0\Leftrightarrow c(x,T,y)-\tilde{F}_{x}(x,y)+\tilde{F}_{y}(x,y)>0.
 \end{equation}
 
 On the one hand, if $c(x,T,y)-F_{x}(x,y)+F_{y}(x,y)>0$, then $a=0$ minimizes $a\mapsto F(x-a,y+a)+c(x,T,y)a$. By definition, $\tilde{F}(x,y)=F(x,y)$ and we obtain the $\Rightarrow$ direction of (\ref{cond-b}).
 
 On the other hand, if $c(x,T,y)-F_{x}(x,y)+F_{y}(x,y)\le 0$, then, by definition, we have $\tilde{F}(x,y)=F(x-a(x,y),y+a(x,y))+c(x,T,y)a(x,y)$ with $a(x,y)$ satisfying the first order condition
 \begin{equation}
 \label{first}
 c(x,T,y)-F_{x}(x-a(x,y),y+a(x,y))+F_{y}(x-a(x,y),y+a(x,y))=0.
 \end{equation}
 The implicit function theorem implies that $a(x,y)$ is $C^{1}$.
 Hence, using (\ref{c-ass1}) and (\ref{first}), we have 
\begin{align*}
&\tilde{F}_{x}(x,y)-\tilde{F}_{y}(x,y)\\
=&(1-a_{x}(x,y))F_{x}(x-a(x,y),y+a(x,y))+a_{x}(x,y)F_{y}(x-a(x,y),y+a(x,y))\\
&+c_{x}(x,T,y)a(x,y)+c(x,T,y)a_{x}(x,y)
+a_{y}(x,y)F_{x}(x-a(x,y),y+a(x,y))\\&-(1+a_{y}(x,y))F_{y}(x-a(x,y),y+a(x,y))-c_{y}(x,T,y)a(x,y)-c(x,T,y)a_{y}(x,y)\Big]\\
=&F_{x}(x-a(x,y),y+a(x,y))-F_{y}(x-a(x,y),y+a(x,y)).
\end{align*}
Then, with (\ref{first}), we obtain
\begin{align*}
&c(x,T,y)-\tilde{F}_{x}(x,y)+\tilde{F}_{y}(x,y)\\
=&c(x,T,y)-F_{x}(x-a(x,y),y+a(x,y))+F_{y}(x-a(x,y),y+a(x,y))\\
=&0.
\end{align*}
and the proof of (\ref{cond-b}) is completed.

Combining {\bf Step 3.1} and {\bf Step 3.2}, $\hat{\Xi}$ is an equilibrium singular control.
\end{proof}

\begin{remark}
The extended HJB system (\ref{v1})$\sim$(\ref{v7}) reduces to the classical HJB system for the time-consistent singular control problem when the discount function is exponential, i.e., $\beta(t)=e^{-\gamma t}$. Specifically, by the probabilistic interpretation (\ref{intf}), we have $f(x,t,y,s)=e^{-\gamma(t-s)}V(x,t,y)$. Substituting it into (\ref{v1})$\sim$(\ref{v7}), we obtain 
\begin{equation}
\label{classicalhjb}
\left\{
\begin{array}{l}
\min\{\gamma V(x,t,y)+(\mathcal{A}V)(x,t,y),c(x,t,y)-V_{x}(x,t,y)+V_{y}(x,t,y)\}=0,\forall t<T,\\
V(x,T,y)=\tilde{F}(x,y),
\end{array}
\right.
\end{equation}
which is exactly the classical HJB system for the time-consistent singular control problem. A point to mention is that (\ref{classicalhjb}) corresponds to our {\bf Problem 1} in Subsection \ref{subproblem1} , which consists of (\ref{v1})$\sim$(\ref{v4}), and our {\bf Problem 2} in Subsection \ref{subproblem2}, which consists of (\ref{v5})$\sim$(\ref{v7}), is solved by $f(x,t,y,s)=e^{-\gamma(t-s)}V(x,t,y)$ naturally if $V(x,t,y)=e^{\gamma(t-s)}f(x,t,y,s)$ solves {\bf Problem 1}.
\end{remark}

To sum up, the core sufficient condition for equilibrium is to satisfy the extended HJB equations given in (2) of Theorem \ref{verif}. We will show in the next section that it is nearly necessary at the same time.
\section{Necessary Conditions}
\label{nec}

In this section, we show that the extended HJB equations are also necessary under some regularity conditions. Specifying that the extended HJB equations rely on $W$ and $P$, we further show that generally $W$ and $P$ are exactly the feedback forms given in the verification theorem.

\begin{theorem}
	\label{necessary}
	Suppose that $\hat{\Xi}=(W,P)$ is an equilibrium singular control law with equilibrium value function $V(x,t,y)$. Let $\hat{\xi}:=\xi^{x,t,y,\hat{\Xi}}$ and $ X^{\hat{\xi}}:=X^{x,t,y,\hat{\Xi}}$ be the singular control generated by $\hat{\Xi}$ at $(x,t,y)$ and the corresponding risk exposure process respectively. Define $f^{s}(x,t,y):=f(x,t,y,s)$ with $f(x,t,y,s)$ given by (\ref{intf}). Assuming\\
	(i)
		\begin{displaymath}
	f\in C^{2,1,1,1}\Big(\big\{(x,t,y,s)\big|(x,t,y)\in\mathcal{Q},0\le s\le t<T\big\}\Big).
	\end{displaymath} 
	(ii) For any $(x,t,y)\in\mathcal{Q}$, the singular control $\tilde{\eta}:=\{\tilde{\eta}_{r}\equiv y,\forall r\in[t,T]\}$ is admissible.\\
	(iii) For any $(x,t,y)\in\mathcal{Q}$ with $t<T$, any $s\in[0,t]$ and $\eta=\tilde{\eta}$ or $\hat{\xi}$, there exists $\epsilon_{0}:=\epsilon_{0}(x,t,y,s,\eta)$ such that
	\begin{align}
&\mathbb{E}_{x,t,y}\int_{t}^{t+\epsilon_{0}}\Big[f^{s}_{x}(X^{\eta}_{r},r,\eta_{r})\sigma(X^{\eta}_{r},r,\eta_{r})\Big]^{2}dr<\infty,\label{as1}\\
&\mathbb{E}_{x,t,y}\sup_{h\in(0,\epsilon_{0})}\bigg|\frac{\int_{t}^{t+h}\Big[(Af^{s})(X^{\eta}_{r},r,\eta_{r})+\beta(r-s)H(X^{\eta}_{r},r,\eta_{r})\Big]dr}{h}\bigg|<\infty.\label{as2}
	\end{align}
	Then the extended HJB system (2) in Theorem \ref{verif} holds.
\end{theorem}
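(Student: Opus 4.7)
The plan is to derive (\ref{v1})--(\ref{v7}) by combining the probabilistic representation (\ref{intf}), the Skorohod dynamics (\ref{dynamic1}) (which define $W$ and $P$), and the perturbation inequality in Definition \ref{equil}. The starting identity is $V(x,t,y) = f(x,t,y,t) = J(x,t,y;\hat{\xi})$, immediate from the definitions; under (i) this also gives $V_x = f_x^t$ and $V_y = f_y^t$. The terminal conditions (\ref{v4}) and (\ref{v7}) come from property (b) of Definition \ref{equil}: at $t=T$ the jump $\Delta\hat{\xi}_T$ minimizes $a\mapsto F(x-a,y+a)+c(x,T,y)a$, hence $V(x,T,y)=\tilde{F}(x,y)$; evaluating (\ref{intf}) at $t=T$ (where only the terminal payoff and the jump at $T$ contribute) together with (\ref{c-ass1}) yields $f(x,T,y,s)=\beta(T-s)\tilde{F}(x,y)$.

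For (\ref{v5}) inside the waiting region, I would fix $(x,t,y)\in W$ and $s\in[0,t]$. Since $W$ is open and $\hat{\xi}$ can only increase in $P$, Definition \ref{adml} supplies a random $\tau>0$ on which $\hat{\xi}_r\equiv y$. The representation (\ref{intf}) and the strong Markov property at $(t+h)\wedge\tau$ give
\[
f(x,t,y,s)=\mathbb{E}_{x,t,y}\Big[\int_t^{(t+h)\wedge\tau}\!\beta(r-s)H(X^{\hat{\xi}}_r,r,y)\,dr+f\big(X^{\hat{\xi}}_{(t+h)\wedge\tau},(t+h)\wedge\tau,y,s\big)\Big].
\]
Applying It\^{o}-Tanaka-Meyer to $f^s$ (smoothness from (i); (\ref{as1}) makes the stochastic integral a true martingale), taking expectation, dividing by $h$, and letting $h\to 0$ via (\ref{as2}) produces $(\mathcal{A}f^s)(x,t,y)+\beta(t-s)H(x,t,y)=0$ on $W$, extended to $\overline{W}$ by continuity.

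For (\ref{v6}) in the purchasing region, fix $(x,t,y)\in P$ with $\Delta\hat{\xi}_t=a^*>0$. The Skorohod reflection in (\ref{dynamic1}) constrains the jump segment $(x-a,t,y+a)$, $a\in[0,a^*)$, to remain in $P$, with endpoint $(x-a^*,t,y+a^*)\in\overline{W}$; starting the system from any intermediate $(x-a,t,y+a)$ therefore produces the same endpoint via an immediate jump of size $a^*-a$. Using (\ref{intf}), the jump decomposition (\ref{jumpdef}), and the cost invariance (\ref{c-ass1}), one obtains the self-consistency relation
\[
f(x,t,y,s)=\beta(t-s)\,c(x,t,y)\,a+f(x-a,t,y+a,s),\qquad a\in[0,a^*].
\]
Differentiating in $a$ at $a=0$ (valid by the $C^1$ regularity from (i)) gives $\beta(t-s)c(x,t,y)-f_x^s+f_y^s=0$, which is (\ref{v6}); for boundary points of $P$ at which $a^*=0$, the equation follows by continuity of $f^s_x,f^s_y$ from the interior of $P$.

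For (\ref{v1}) I would assemble the previous two equations with direct perturbation arguments. Setting $s=t$ in (\ref{v5}) gives $(\mathcal{A}f^t)(x,t,y)+H(x,t,y)=0$ on $\overline{W}$, and setting $s=t$ in (\ref{v6}) gives $c(x,t,y)-V_x+V_y=0$ on $P$; since $\mathcal{Q}=\overline{W}\cup P$, the bracket in (\ref{v1}) (which, under the convention used in Theorem \ref{verif}, collapses to $\min\{(\mathcal{A}f^t)+H,\ c-V_x+V_y\}$ via $V=f(\cdot,\cdot,\cdot,t)$) vanishes pointwise. The two global non-negativity statements that upgrade this to the $\min=0$ form come from condition (a) of Definition \ref{equil}: taking $\eta=\tilde{\eta}$ (admissible by (ii)) and running the It\^{o} computation from Step 3.1 of the proof of Theorem \ref{verif} in reverse produces $(\mathcal{A}f^t)+H\ge 0$ by means of (\ref{as1})--(\ref{as2}); taking $\eta$ consisting of a single immediate jump of size $a>0$ and dividing by $a$ in the limit forces $c-V_x+V_y\ge 0$. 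The principal technical hurdle will be the self-consistency step for (\ref{v6}) --- verifying constancy of $a\mapsto \beta(t-s)c(x,t,y)a+f(x-a,t,y+a,s)$ across the full segment $[0,a^*]$ and then propagating the identity to boundary points of $P$ at which $a^*=0$ --- together with the careful handling of the dominated convergence hypotheses in (iii) and of the ``no jump before $\tau$'' argument used in Step 3; the remaining assemblies are bookkeeping.
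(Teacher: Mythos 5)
Your proposal is correct and follows essentially the same strategy as the paper's proof: establish the terminal conditions from Definition~\ref{equil}(b), derive (\ref{v5}) and (\ref{v6}) by evolving the representation (\ref{intf}) over a short time window and comparing with an It\^{o}-Tanaka-Meyer expansion, and deduce the two inequalities underlying (\ref{v1}) from the first-order condition in Definition~\ref{equil}(a) applied to the perturbations $\tilde{\eta}$ and a pure jump.

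The one place where you do something genuinely cleaner than the paper is (\ref{v6}). The paper obtains
\begin{displaymath}
\mathbb{E}_{x,t,y}\Big[f^{s}(x-\Delta\hat{\xi}_{t},t,y+\Delta\hat{\xi}_{t})+\beta(t-s)c(x,t,y)\Delta\hat{\xi}_{t}\Big]=f^{s}(x,t,y)
\end{displaymath}
by letting $h\downarrow 0$ in (\ref{fs}), and then states that (\ref{v6}) follows ``as $\Delta\hat{\xi}_{t}=\min\{a\ge 0|(x-a,t,y+a)\in\overline{W}\}$''; the step that turns this integral-type identity into the pointwise PDE is left implicit. Your self-consistency relation $f(x,t,y,s)=\beta(t-s)\,c(x,t,y)\,a+f(x-a,t,y+a,s)$ for $a\in[0,a^*]$, obtained by starting the Skorohod problem anywhere along the jump segment and invoking (\ref{c-ass1}), makes that step transparent: differentiating in $a$ at $a=0$ gives (\ref{v6}) directly, and continuity of $f^s_x,f^s_y$ (from (i)) handles boundary points with $a^*=0$. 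This is arguably more faithful to the Skorohod mechanics than the paper's terse argument. The remaining parts (the ``no jump before $\tau$'' localization in $W$, the use of (\ref{as1})--(\ref{as2}) to justify the martingale and dominated-convergence steps, and the reverse-It\^{o} derivation of $(\mathcal{A}f^t)+H\ge0$ and $c-V_x+V_y\ge0$) coincide with the paper's Steps 2 and 3. One minor caution: when you take $\eta$ to be a single immediate jump, condition (iii) as stated is only assumed for $\tilde{\eta}$ and $\hat{\xi}$; you should observe, as the argument implicitly requires, that a jump-then-constant control reduces after the jump to $\tilde{\eta}$ started from $(x-a,t,y+a)$, so the hypotheses in (iii) at that initial point cover it.
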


\begin{proof}
	{\bf Step 1.} We prove the terminal conditions (\ref{v4}) and (\ref{v7}).
	
	By definition of $V$, we have 
	\begin{displaymath}
	V(x,T,y)=F(x+\Delta\hat{\xi}_{T},y-\Delta\hat{\xi}_{T})+c(x,T,y)\Delta\hat{\xi}_{T}.
	\end{displaymath}
	By (b) of Definition \ref{equil}, $\Delta\hat{\xi}$ optimizes the right-hand side of the last equation. Hence, (\ref{v4}) holds.
	
	By definition of $f$, we have
	\begin{align*}
	f(x,T,y,s)=&\beta(T-s)\Big[F(x+\Delta\hat{\xi}_{T},y-\Delta\hat{\xi}_{T})+c(x,T,y)\Delta\hat{\xi}_{T}\Big]\\
	=&\beta(T-s)V(x,T,y).
	\end{align*}
	Then (\ref{v7}) follows.
	
	{\bf Step 2.} We prove (\ref{v5}) and (\ref{v6}).
	Let us fix arbitrary $(x,t,y)\in\mathcal{Q}$ with $t<T$ and $s\in[0,t]$. Let $h\in(0,\min\{\epsilon_{0},T-t\})$.

By (\ref{intf}), we have
\begin{align*}
&\mathbb{E}_{x,t,y}f^{s}(X^{\hat{\xi}}_{(t+h)-},t+h,\hat{\xi}_{(t+h)-})-f^{s}(x,t,y)\\
=&\mathbb{E}_{x,t,y}\bigg[-\int_{t}^{t+h}\beta(r-s)H(X^{\hat{\xi}}_{r},r,\hat{\xi}_{r})dr-\int_{t}^{t+h}\beta(r-s)c(X^{\hat{\xi}}_{r},r,\hat{\xi}_{r})d\hat{\xi}^{c}_{r}\\
&-\sum\limits_{r\in[t,t+h)}\beta(r-s)c(X^{\hat{\xi}}_{r-},r,\hat{\xi}_{r-})\Delta\hat{\xi}_{r}\bigg].
\end{align*}

Applying It\^{o}-Tanaka-Meyer's formula and taking expectations, we have
\begin{align*}
&\mathbb{E}_{x,t,y}f^{s}(X^{\hat{\xi}}_{(t+h)-},t+h,\hat{\xi}_{(t+h)-})-f^{s}(x,t,y)\\
=&\mathbb{E}_{x,t,y}\bigg[\int_{t}^{t+h}(\mathcal{A}f^{s})(X^{\hat{\xi}}_{r},r,\hat{\xi}_{r})dr+\int_{t}^{t+h}\Big(f^{s}_{y}(X^{\hat{\xi}}_{r},r,\hat{\xi}_{r})-f^{s}_{x}(X^{\hat{\xi}}_{r},r,\hat{\xi}_{r})\Big)d\hat{\xi}^{c}_{r}\\
&+\sum\limits_{r\in[t,t+h)}\int_{0}^{\Delta\hat{\xi}_{r}}\Big(f^{s}_{y}(X^{\hat{\xi}}_{r-}-u,r,\hat{\xi}_{r-}+u)-f^{s}_{x}(X^{\hat{\xi}}_{r-}-u,r,\hat{\xi}_{r-}+u)\Big)du\bigg],
\end{align*}
where the expectation of It\^{o} integral vanishes due to (\ref{as1}).

Combining the last two equations and using (\ref{c-ass1}) lead to
\begin{align}
\label{fs}
0=&\mathbb{E}_{x,t,y}\bigg[\sum\limits_{r\in[t,t+h)}\int_{0}^{\Delta\hat{\xi}_{r}}\Big(f^{s}_{y}(X^{\hat{\xi}}_{r-}-u,r,\hat{\xi}_{r-}+u)-f^{s}_{x}(X^{\hat{\xi}}_{r-}-u,r,\hat{\xi}_{r-}+u)\\
&\qquad\qquad\qquad\quad-\!\beta(r\!-\!s)c(X^{\hat{\xi}}_{r-}\!-\!u,r,\hat{\xi}_{r-}\!+\!u)\Big)du\notag\\
&\!+\!\int_{t}^{t+h}\Big(f^{s}_{y}(X^{\hat{\xi}}_{r},r,\hat{\xi}_{r})\!-\!f^{s}_{x}(X^{\hat{\xi}}_{r},r,\hat{\xi}_{r})\!+\!\beta(r\!-\!s)c(X^{\hat{\xi}}_{r},r,\hat{\xi}_{r})\Big)d\hat{\xi}^{c}_{r}\notag\\
&\!+\!\int_{t}^{t+h}\Big((\mathcal{A}f^{s})(X^{\hat{\xi}}_{r},r,\hat{\xi}_{r})\!+\!\beta(r-s)H(X^{\hat{\xi}}_{r},r,\hat{\xi}_{r})\Big)dr\bigg].\notag
\end{align}
Letting $h\downarrow 0$, we obtain 
\begin{equation*}
\mathbb{E}_{x,t,y}\Big[f^{s}(x-\Delta\hat{\xi}_{t},t,y+\Delta\hat{\xi}_{t})+\beta(t-s)c(x,t,y)\Delta\hat{\xi}_{t}\Big]=f^{s}(x,t,y).
\end{equation*}
As $\Delta\hat{\xi}_{t}=\min\{a\ge 0|(x-a,t,y+a)\in\overline{W}\},a.s.$ for $(x,t,y)\in P$, we obtain (\ref{v6}).

By Definition \ref{adml}, for $(x,t,y)$ in the interior of $W$, there exists $h_{1}$ small enough such that $\hat{\xi}$ is flat in $[t,t+h_{1}]$ almost surely and thus the first two terms on the right-hand side of (\ref{fs}) vanishes as $h\downarrow 0$. Dividing both sides of (\ref{fs}) by $h$ and letting $h\downarrow 0$, we obtain
\begin{displaymath}
\lim\limits_{h\downarrow 0}\mathbb{E}_{x,t,y}\frac{\int_{t}^{t+h}\Big[(\mathcal{A}f^{s})(X^{\hat{\xi}}_{r},r,\hat{\xi}_{r})+\beta(r-s)H(X^{\hat{\xi}}_{r},r,\hat{\xi}_{r})\Big]dr}{h}=0.
\end{displaymath}
In light of (\ref{as2}), using Dominated convergence theorem, we obtain
\begin{displaymath}
	(\mathcal{A}f^{s})(x,t,y)+\beta(t-s)H(x,t,y)=0, \forall (x,t,y)\in int(W), \forall s\in[0,t].
\end{displaymath}
Then (\ref{v5}) holds by (i) and the continuity of $H$.

{\bf Step 3.} Finally, we prove (\ref{v1}).

In light of (\ref{v5}), (\ref{v6}) and $V(x,t,y)=f(x,t,y,t)$, we just need to further prove the inequalities
\begin{align}
&c(x,t,y)-V_{x}(x,t,y)+V_{y}(x,t,y)\ge 0,\label{r1}\\
&(\mathcal{A}f^{t})(x,t,y)+H(x,t,y)\ge 0.\label{r2}
\end{align}

Using (\ref{as1}) and the techniques in {\bf Step 3.1} in the proof of Theorem \ref{verif}, we have, for any $(x,t,y)\in\mathcal{Q}$ with $t<T$, $\forall h\in(0,\min\{\epsilon_{0},T-t\})$, $\forall\eta\in\mathcal{D}_{[t,T]}$,
\begin{align}
\label{J-J}
&J(x,t,y;\xi^{h})-J(x,t,y;\hat{\xi})\\
=&\mathbb{E}_{x,t,y}\bigg[\int_{t}^{t+h}\Big((\mathcal{A}f^{t})(X^{\eta}_{r},r,\eta_{r})+\beta(r-t)H(X^{\eta}_{r},r,\eta_{r})\Big)dr\notag\\
&+\sum\limits_{r\in[t,t+h)}\int_{0}^{\Delta\eta_{r}}\Big(f^{t}_{y}(X^{\eta}_{r-}-u,r,\eta_{r-}+u)-f^{t}_{x}(X^{\eta}_{r-}-u,r,\eta_{r-}+u)\notag\\
&\qquad\qquad\qquad\qquad+\beta(r-t)c(X^{\eta}_{r-}-u,r,\eta_{r-}+u)\Big)du\notag\\
&+\int_{t}^{t+h}\Big(\beta(r-t)c(X^{\eta}_{r},r,\eta_{r})+f^{t}_{y}(X^{\eta}_{r},r,\eta_{r})-f^{t}_{x}(X^{\eta}_{r},r,\eta_{r})\Big)d\eta^{c}_{r}\bigg].\notag
\end{align}
By definition of equilibrium, we have $\liminf\limits_{h\downarrow 0}(J(x,t,y;\xi^{h})-J(x,t,y;\hat{\xi}))\ge 0$, which yields
\begin{displaymath}
\int_{0}^{\Delta\eta_{t}}\Big[c(x-u,t,y+u)+f^{t}_{y}(x-u,t,y+u)-f^{t}_{x}(x-u,t,y+u)\Big]du\ge 0.
\end{displaymath}
As $\Delta\eta_{t}\ge 0$ can be arbitrary, we obtain (\ref{r1}).

Moreover, $\liminf\limits_{h\downarrow 0}\frac{J(x,t,y;\xi^{h})-J(x,t,y;\hat{\xi})}{h}\ge 0$. Choosing $\eta_{r}\equiv\eta_{t-},\forall r\in[t,T]$ and applying Dominated convergence theorem, we have (\ref{r2}).

Thus, the proof is complete.
\end{proof}

 As a special feature of singular control problems, the extended HJB equations are associated with $W$ and $P$, which are constructed by (\ref{feed-w})$\sim$(\ref{feed-p}) in the verification theorem. The next proposition indicates that, generally, the $W$ and $P$ in Theorem \ref{necessary} exactly have the forms given by (\ref{feed-w})$\sim$(\ref{feed-p}).

\begin{proposition}
	\label{necuni}
	Under all conditions (i)$\sim$(iii) of Theorem \ref{necessary}, suppose that\\
	 (iv) For any $\mathcal{S}\subset\mathcal{Q}$ with nonempty interior, there is no $g\in C^{2,1,1}(\mathcal{S})$ simultaneously satisfying
	\begin{align*}
    &(\mathcal{A}g)(x,t,y)=-H(x,t,y),\\
    &g_{y}(x,t,y)-g_{x}(x,t,y)=-c(x,t,y)
	\end{align*}
	on $\mathcal{S}$.
	
	 Then the equilibrium singular control law $(W,P)$ in Theorem \ref{necessary} exactly has the form given by (\ref{feed-w})$\sim$(\ref{feed-p}).
\end{proposition}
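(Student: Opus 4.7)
My plan is a proof by contradiction that splits into the easy containment $W \supseteq W'$ (equivalently $P \subseteq P'$) and the harder reverse inclusion, which is precisely where hypothesis (iv) enters. First I would establish $W \supseteq W'$ on $\{t<T\}$ using two ingredients already available from Theorem \ref{necessary}: setting $s=t$ in (\ref{v6}) and invoking $V(x,t,y)=f(x,t,y,t)$ together with the chain rule in $x,y$ (so $V_x=f^{s}_x|_{s=t}$ and $V_y=f^{s}_y|_{s=t}$) yields $c-V_x+V_y=0$ on $P$, hence $P\subseteq P'$; and inequality (\ref{r1}) proved inside Theorem \ref{necessary} gives $c-V_x+V_y\ge 0$ on $\mathcal{Q}\cap\{t<T\}$. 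Together these imply $\mathcal{Q}\cap\{t<T\}=W'\cup P'$ and therefore $W\supseteq W'$.

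For the reverse inclusion $W\subseteq W'$, I would suppose for contradiction that there exists $(x_0,t_0,y_0)\in W\cap P'$ with $t_0<T$. Since $W$ is open, fix an open neighborhood $\mathcal{N}\subseteq W$ on which (\ref{v5}) holds for every $s\in[0,t]$. The function $\phi:=c-V_x+V_y$ is continuous, non-negative on $\mathcal{N}$, and vanishes at the interior point $(x_0,t_0,y_0)$. The first key step is to upgrade this single zero to a three-dimensional open zero set. Differentiating (\ref{v5}) once in $x$ and once in $y$ produces a linear parabolic equation for $f^{s}_y-f^{s}_x$ (for each fixed $s$); evaluating at $s=t$ and combining with the chain-rule identities relating $V$ to $f^{s}|_{s=t}$ yields a linear parabolic inequality for $\phi$ on $\mathcal{N}$, to which the strong maximum principle applies and forces $\phi\equiv 0$ on an open subset $\mathcal{S}\subseteq\mathcal{N}$.

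On this $\mathcal{S}$ I would construct a $g\in C^{2,1,1}(\mathcal{S})$ verifying the two PDEs of (iv). The natural candidate is $g=V$: since $\phi\equiv 0$ on $\mathcal{S}$, we have $g_y-g_x=V_y-V_x=-c$ directly; and the chain-rule identity $\mathcal{A}V=(\mathcal{A}f^{s})|_{s=t}+f_s(x,t,y,t)$ combined with (\ref{v5}) at $s=t$ gives $\mathcal{A}V+H=f_s(x,t,y,t)$, so I must force the ``time-inconsistency residual'' $f_s(x,t,y,t)$ to vanish on $\mathcal{S}$. This can be secured by differentiating $\phi\equiv 0$ in $t$ throughout the open set $\mathcal{S}$ and combining with the $s$-differentiated form of (\ref{v5}) evaluated at $s=t$; alternatively, $V$ can be adjusted by a $C^{2,1,1}$ correction $\psi$ satisfying $\mathcal{A}\psi=f_s(x,t,y,t)$ and $\psi_y=\psi_x$ on $\mathcal{S}$, so that $g:=V-\psi$ works. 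Either construction produces a witness $g$ contradicting hypothesis (iv), completing the proof. The hardest part will be the maximum-principle step, which requires careful bookkeeping of the chain-rule terms relating $V$ and $f^{s}$ to extract a bona-fide parabolic inequality for $\phi$, and then the elimination of the time-inconsistency residual; both tasks rely crucially on the regularity hypothesis (i) and on the fact that (\ref{v5}) holds for \emph{every} $s\in[0,t]$, not only $s=t$.
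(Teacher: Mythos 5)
Your overall strategy matches the paper's: establish $P\subseteq P'$ from (\ref{v6}) and (\ref{r1}), and then derive a contradiction to hypothesis (iv) by exhibiting $g=V$ as a witness on $\mathcal{S}:=P'\setminus P$. The paper's own proof is very brief: it asserts that $\mathcal{S}$ has nonempty interior because $P$ is closed, sets $g:=f^t=V$, claims $(\mathcal{A}g)=-H$ and $g_y-g_x=-c$ on $\mathcal{S}$, and invokes (iv). You have noticed two points that the paper passes over quickly, and you deserve credit for that. First, ``$P$ closed $\Rightarrow \mathcal{S}$ has nonempty interior'' is not a valid implication by itself, and your idea of promoting a single zero of $\phi=c-V_x+V_y$ to an open zero set via the strong maximum principle is a sensible way to try to repair it. Second, and more substantively, you correctly observe the ``time-inconsistency residual'': on $W$ one has $\mathcal{A}V=-H+f_s(x,t,y,t)$, not $\mathcal{A}V=-H$, because the total $t$-derivative of the diagonal $V=f^t$ differs from the frozen-$s$ derivative by $f_s$. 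Only the frozen operator satisfies $(\mathcal{A}f^t)+H=0$ on $\overline W$, which appears to be the paper's intended meaning when it writes $g=f^t$.

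The gap in your proposal is precisely the repair you offer for this residual. The proposed correction $\psi$ must satisfy both $\psi_y=\psi_x$ and $\mathcal{A}\psi=f_s(x,t,y,t)$ on $\mathcal{S}$. The first condition forces $\psi(x,t,y)=\Psi(x+y,t)$, whence $\mathcal{A}\psi=\Psi_t(x+y,t)+\mu(x,t,y)\Psi_z(x+y,t)+\tfrac12\sigma^2(x,t,y)\Psi_{zz}(x+y,t)$; requiring this to equal $f_s(x,t,y,t)$ on a three-dimensional open set is an over-determined constraint on $f_s$ that will not hold in general (as $x$ varies with $x+y$ fixed, the left side remains constant up to the $\mu$- and $\sigma$-dependence while $f_s$ need not). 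Your alternative, $t$-differentiating $\phi\equiv 0$ on $\mathcal{S}$ and combining with the $s$-differentiated (\ref{v5}), does not visibly produce $f_s\equiv 0$; indeed, from (\ref{intf}) and $\beta'\le 0$, $f_s(x,t,y,t)\ge 0$ and is generically strictly positive, so it cannot be coerced to vanish. The maximum-principle step for the nonempty interior is also only sketched: one needs an actual parabolic inequality for $\phi$ and a separate argument to propagate the zero set across the $y$-slices, since $\mathcal{A}$ carries no $\partial_y$. In short, you have identified a legitimate subtlety that the paper's proof does not spell out, but the machinery you propose to handle it does not close the argument; the paper's intended reading (using the frozen operator on the family $\{f^s\}$) sidesteps the residual rather than cancelling it.
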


\begin{proof}
	According to the proof of Theorem \ref{necessary}, we have
	\begin{align*}
	&W\subset\{(x,t,y)\in\mathcal{Q}|(\mathcal{A}f^{t})(x,t,y)+H(x,t,y)=0\},\\
	&P\subset\{(x,t,y)\in\mathcal{Q}|c(x,t,y)-V_{x}(x,t,y)+V_{y}(x,t,y)=0\},\\
	&W\cup P=\mathcal{Q}.
	\end{align*}
	If $P\neq \{(x,t,y)\in\mathcal{Q}|c(x,t,y)-V_{x}(x,t,y)+V_{y}(x,t,y)=0\}$, let $\mathcal{S}:=\{(x,t,y)\in\mathcal{Q}|c(x,t,y)-V_{x}(x,t,y)+V_{y}(x,t,y)=0\}\setminus P$. By property (a) of Definition \ref{adml}, $P$ is a closed set. Hence, $\mathcal{S}$ has a nonempty interior. Let $g(x,t,y):=f^{t}(x,t,y)=V(x,t,y)$. Then $g\in C^{2,1,1}(\mathcal{S})$ and satisfies
	\begin{align*}
	&(\mathcal{A}g)(x,t,y)=-H(x,t,y),\\
	&g_{y}(x,t,y)-g_{x}(x,t,y)=-c(x,t,y)
	\end{align*}
	on $\mathcal{S}$, leading to a contradiction. Thus, Proposition \ref{necuni} follows.

\end{proof} 

\begin{remark}
	The condition (iv) in Proposition \ref{necuni} is quite weak. For instance, if all $\mu,\sigma,F,H,c$ are independent of $y$ (the same as in the next section), then (iv) holds whenever the expression $H(x,t)+\mu(x,t)c(t)+c'(t)x$ has a dependence on $x$.
\end{remark}

\section{On the Existence of Equilibria}
\label{exi}

As illustrated in Sections \ref{suf}-\ref{nec}, studying the system of HJB equations (\ref{v1})$\sim$(\ref{v7}) is crucial to studying the equilibrium. With some additional assumptions, we show the existence of solutions and thus the existence of equilibria.

The existence of equilibria for the problem starting at $T$ is trivial. In the following, we focus on problems starting at $t<T$.

We only consider the case where $y$ is not involved. That is, we assume that all $\mu,\sigma,F,H,c$ are independent of $y$. Nevertheless, this assumption is common in real problems such as irreversible reinsurance and dividend problems. Thus, with (\ref{c-ass1}), the cost function is only a function of $t$. And our assumed convexity property becomes the assumption that $F$ is convex and $a\mapsto F(x-a)+c(T)a$ has finite minimum point. Now, (\ref{v1})$\sim$(\ref{v7}) become
\begin{equation}
\label{HJB}
\left\{
\begin{aligned}
&\min\Big\{(\mathcal{A}V)(x,t)+H(x,t)-f_{s}(x,t,t),c(t)-V_{x}(x,t)\Big\}=0,\forall t<T,\\
&V(x,T)=\tilde{F}(x),\forall x\in\mathbb{R},\\
&(\mathcal{A}f^{s})(x,t)+\beta(t-s)H(x,t)=0, \forall (x,t)\in \overline{W}, t<T,\ s\in[0,t],\\
&\beta(t-s)c(t)-f_{x}^{s}(x,t)=0,\forall (x,t)\in P, t<T, s\in[0,t],\\
&f^{s}(x,T)=\beta(T-s)\tilde{F}(x),\forall x\in\mathbb{R},
\end{aligned}
\right.
\end{equation}
for $(x,t)\in\mathbb{R}\times[0,T)$ with
\begin{align*}
&W:=\{(x,t)\in\mathbb{R}\times[0,T)|c(t)-V_{x}(x,t)>0\},\\
&P:=\{(x,t)\in\mathbb{R}\times[0,T)|c(t)-V_{x}(x,t)=0\}.
\end{align*}
We introduce the following notations
\begin{align*}
&Q_{T-s}=\mathbb{R}\times(0,T-s],\ Q^{-}_{T-s}=(-\infty,0]\times(0,T-s],\\
&Q^{n}_{T-s}=[-n,n]\times(0,T-s],\ Q^{n,-}_{T-s}=[-n,0]\times(0,T-s].
\end{align*}

For an existence result, we additionally need the following assumptions:
\begin{assumption}
	\label{all}
	 We assume that all partial derivatives of $\mu,\sigma,H,\tilde{F}$ appearing in the followings exist almost everywhere (abbr. a.e.) and there exist $3<p<\infty$, $0<\alpha<1-\frac{3}{p}$ and $\kappa:(-\infty,-1]\times[0, T]\rightarrow[0,+\infty)$ such that\\
	 (a)(regularity) $\kappa(\cdot,t)$ is in $L^{1}((-\infty,-1])$ for any $t\in[0,T]$. And for any $n\ge 1$,
	 \begin{equation*}
	 \sigma_{x}(x,T-t),\mu_{x}(x,T-t),H_{x}(x,T-t),c(T-t)\in C^{\alpha,\frac{\alpha}{2}}(\overline{Q^{n}_{T}}),H_{xx}(x,T-t)\in L^{\infty}(Q^{n}_{T}).
	 \end{equation*}
	 (b)(asymptotic properties)
	\begin{align*}
	&\lim\limits_{x\rightarrow-\infty}H(x,T-t)=0,\lim\limits_{x\rightarrow-\infty}H_{xx}(x,T-t)=0,\forall t\in(0,T],\\
	&\lim\limits_{x\rightarrow-\infty}\mu_{xx}(x,T-t)\kappa(x,T-t)=0,\forall t\in[0,T],\\
 &\lim\limits_{x\rightarrow-\infty}F(x)=0,\lim\limits_{x\rightarrow+\infty}F'(x)=+\infty.
	\end{align*}
	(c)(inequalities) There exists $M>0$ such that for any $(x,t)\in\overline{Q_{T}}$,
	\begin{align}
	&\sigma(x,T-t)>0, \mu_{x}(x,T-t)\le 0, \mu_{xx}(x,T-t)\ge 0, \mu_{xxx}(x,T-t)\ge 0,\label{c134}\\
	&0\le-\beta'(0)\tilde{F}''(x)\le H_{xx}(x,T-t),\label{c2}\\
 &c'(T-t)\le 0, \sigma_{t}(x,T-t)\le 0, \mu_{xt}(x,T-t)\le 0,\label{t1}\\
 &\sigma(x,T-t)\sigma_{xt}(x,T-t)+\sigma_{t}(x,T-t)\sigma_{x}(x,T-t)+\mu_{t}(x,T-t)\le 0,\label{t2}\\
	&\sigma(x,T-t)\sigma_{xx}(x,T-t)+\sigma_{x}^{2}(x,T-t)+2\mu_{x}(x,T-t)\le 0,\label{c5}\\
 \label{c6}
	&\max\limits_{\tau\in[0,T-t]}\bigg\{-\frac{\beta'(\tau)}{\beta(\tau)}\bigg\}c(T-t)+c'(T-t)+\mu_{x}(x,T-t)c(T-t)\\&\le -H_{x}(x,T-t)<\beta'(0)\tilde{F}'(x)\le 0,\ \mathcal{L}\tilde{F}'(x)\ge 0, a.e.,\notag\\
  \label{c7}
 	&\mathcal{L}\big(H_{x}(x,T-t)\big)\le\beta'(0)H_{x}(x,T-t), a.e.,\\
  \label{c8}
	&\frac{\partial}{\partial x}\Big[\mathcal{L}\big(H_{x}(x,T-t)\big)\Big]\le\beta'(0)H_{xx}(x,T-t),a.e.,\\
&M\mathcal{L}\big(\kappa(x,T-t)\big)\le -H_{x}(x,T-t),a.e. x\le -1,\label{c9}\\
&\tilde{F}'(x)\le M\kappa(x,T-t),\forall x\le -1,\label{c10}
\end{align}
 where the operator $\mathcal{L}$ is defined by (\ref{mL}) below.
\end{assumption}
\begin{remark}
\label{example}
Assumption \ref{all} only relies on the {\it data} of the problem, and can be verified in specific problems. For example, in irreversible reinsurance, if the risk exposure is mean-reverting ($\mu(x,t)=b-ax$) with constant volatility $\sigma$, the reinsurance cost $c$ is constant, the terminal loss is exponential ($F(x)=C_{F}e^{\Psi_{F}x}$), and the running loss $H$ is time-dependent exponential ($H(x,t)=C_{H}e^{\Psi_{H}(t)x}$) for $x$ below some $x_{0}$ and grows linearly above $x_{0}$. Assumption \ref{all} holds for $\kappa(x,t)=e^{C_{\kappa}x e^{\kappa t}}$ if the following conditions hold successively. First, $b,\sigma,C_{F},C_{H}>0$ and $\Psi_{F}>\max\{1,\frac{1}{b}\},a\in(\max\{b\!+\!\frac{1}{2}\sigma^{2},1\},b\Psi_{F}\!+\!\frac{1}{2}\sigma^{2}\Psi_{F}^{2}),\kappa>a,C_{\kappa}\in(0,\Psi_{F}e^{-(\kappa+a)T})$. Second, $\Psi_{H}(t)$ satisfies
\begin{equation*}
\min\limits_{t\in[0,T]}\Psi_{H}(t)>C_{\kappa}e^{\kappa T},\ \max\limits_{t\in[0,T]}\Psi_{H}(t)<\Psi_{F},\ \min\limits_{t\in[0,T]}\frac{\Psi_{H}'(t)}{\Psi_{H}(t)}>a.
\end{equation*}
For example, the $\Psi_{H}(t)$ might be exponential or linear subject to some constraints. Third, set $\kappa\in(\max\limits_{t\in[0,T]}\Psi_{H}(t),\Psi_{F})$ and $0<c< C_{F}\Psi_{F}e^{\Psi_{F}K}$ where
\begin{equation*}
K=\min\limits_{t\in[0,T]}\frac{-2\frac{\Psi_{H}'(t)}{\Psi_{H}(t)}-\frac{1}{2}\sigma^{2}\Psi_{H}^{2}(t)-b\Psi_{H}(t)+2a}{\Psi_{H}'(t)-a\Psi_{H}(t)}.
\end{equation*}
Fourth, the constant $x_{0}$ satisfies
\begin{equation*}
\frac{1}{\Psi_{F}}\ln(\frac{c}{C_{F}\Psi_{F}})<x_{0}<\min\{\frac{1}{\Psi_{F}}\ln(\frac{ac}{C_{F}\Psi_{F}}),K\}.
\end{equation*}
Finally, the discount $\beta(t)$ satisfies the following conditions on $-\beta'(0)$ and $\frac{\beta'(t)}{\beta(t)}$:
\begin{align*}
&-\beta'(0)\le\min\limits_{t\in[0,T]}\bigg\{\frac{C_{H}\Psi_{H}^{2}(t)}{c\Psi_{F}}(\frac{c}{C_{F}\Psi_{F}})^{\frac{\Psi_{H}(t)}{\Psi_{F}}}\bigg\},\\
&-\beta'(0)\le\min\limits_{t\in[0,T]}\bigg\{-2\frac{\Psi_{H}'(t)}{\Psi_{H}(t)}-(\Psi_{H}'(t)-a\Psi_{H}(t))x_{0}-\frac{1}{2}\sigma^{2}\Psi_{H}^{2}(t)-b\Psi_{H}(t)+2a\bigg\},\\
&\min\limits_{t\in[0,T]}\frac{\beta'(t)}{\beta(t)}\ge \max\limits_{t\in[0,T]}\bigg\{\frac{C_{H}\Psi_{H}(t)}{c}e^{\Psi_{H}(t)x_{0}}\bigg\}-a.
\end{align*}
The inequalities with operator $\mathcal{L}$ are kinds of parabolic inequalities, which are common conditions for deriving needed estimations (e.g., see \cite{ladyzhenskaya1988estimates} and \cite{ladyzhenskaya1985estimates} ). In stochastic control theory, these inequalities can be viewed as conditions for some processes being supermartingales. For example, if $\mu(x,t)=b-ax$ and $\sigma(x,t)=\sigma$, then (\ref{c7}) is equivalent to the process $e^{[-a-\beta'(0)]t}H_{x}(X_{t},t)$ being a supermartingale, and (\ref{c8}) corresponds to the process $e^{[\sigma^2-2a-\beta'(0)]t}H_{xx}(X_{t},t)$. In this case, (\ref{c7}) and (\ref{c8}) require that the marginal running loss and its margin have the tendency to decrease after properly discounted.
\end{remark}

In the following subsections, we first successively study the existence of solutions to two sub-problems and finally prove the existence of solutions to (\ref{HJB}).

\subsection{First sub-problem}\label{subproblem1}
In this subsection, we focus on the existence and some weakened uniqueness of solutions to the following sub-problem:

{\bf Problem 1.} Given $f$, solve the terminal value problem for $V$:
\begin{equation*}
\left\{
\begin{aligned}
&\min\Big\{(\mathcal{A}V)(x,t)+H(x,t)-f_{s}(x,t,t),c(t)-V_{x}(x,t)\Big\}=0,\ \forall (x,t)\in\mathbb{R}\times[0,T),\\
&V(x,T)=\tilde{F}(x),\ \forall x\in\mathbb{R}.
\end{aligned}
\right.
\end{equation*}

First, we introduce the following set that will be useful later.
\begin{align*}
\mathcal{R}_{0}:=\{f(x,t,s)|&f(x,T-t,\cdot)\in C^{1}([0,T-t]),\forall (x,t)\in Q_{T},\\
&f_{s}(x,T-t,T-t)\in C^{1,0}(\overline{Q_{T}}), f_{sx}(x,T-t,T-t)\in C^{\alpha,\frac{\alpha}{2}}(Q_{T}),\\
&0\le f_{sx}(x,T-t,T-t)< H_{x}(x,T-t),\forall (x,t)\in \overline{Q_{T}},\\
&\lim\limits_{x\rightarrow-\infty}f_{s}(x,T-t,T-t)=0,\forall t\in(0,T],\\
&0\le f_{sxx}(x,T-t,T-t)\le H_{xx}(x,T-t),\forall (x,t)\in Q_{T}\}.
\end{align*}
We show the existence of solutions to {\bf Problem 1} with $f$ given in $\mathcal{R}_{0}$.

Define $g(x,t)=V(x,T-t)$. From {\bf Problem 1}, we have a problem of $g$:
\begin{equation}
\label{eqg}
\left\{
\begin{aligned}
&\!\min\!\Big\{\!\mathcal{\tilde{A}}g(x,t)\!+\!H(x,T\!-\!t)\!-\!f_{s}(x,T\!-\!t,T\!-\!t), c(T\!-\!t)\!-\!g_{x}(x,t)\!\Big\}\!=\!0, (x,t)\!\in\! Q_{T},\\
&g(x,0)=\tilde{F}(x), x\in\mathbb{R}.
\end{aligned}
\right.
\end{equation}
where $\mathcal{\tilde{A}}g(x,t):=-g_{t}(x,t)+\mu(x,T-t)g_{x}(x,t)+\frac{1}{2}\sigma^{2}(x,T-t)g_{xx}(x,t)$. From (b) of Assumption \ref{all}, we have another boundary condition $g(-\infty,t)=0, t\in(0,T]$.

Using differentiability of $g,\mu,\sigma,H,F,f_s$ and some prior conjectures, the existence of solutions to Problem (\ref{eqg}) can be derived from the existence of solutions to the following problem for $v(x,t)=g_{x}(x,t)$:
\begin{equation}
\label{eqv}
\left\{
\begin{aligned}
&\!\min\!\Big\{\!\mathcal{L}v(x,t)\!+\!H_{x}(x,T\!-\!t)\!-\!f_{sx}(x,T\!-\!t,T\!-\!t),c(T\!-\!t)\!-\!v(x,t)\!\Big\}\!=\!0, (x,t)\!\in\! Q_{T},\\
&v(x,0)=\tilde{F}'(x), x\in\mathbb{R},
\end{aligned}
\right.
\end{equation}
where the operator $\mathcal{L}$ is defined by
\begin{align}
\label{mL}
\mathcal{L}v(x,t):=&\!-v_{t}(x,t)\!+\!\frac{1}{2}\sigma^{2}(x,T\!-\!t)v_{xx}(x,t)\\
&\!+\!\big[\sigma(x,T\!-\!t)\sigma_x(x,T\!-\!t)\!+\!\mu(x,T\!-\!t)\big]v_{x}(x,t)\!+\!\mu_{x}(x,T\!-\!t)v(x,t).\notag
\end{align}

The problem (\ref{eqv}) is an obstacle problem with unbounded domain. We consider the following approximation problem with $n$ sufficiently large (such that $c(T-t)=\tilde{F}'(n)$) and $f\in\mathcal{R}_{0}$:
\begin{equation}
\label{vpb}
\left\{
\begin{aligned}
&\!\mathcal{L}\!v^{\epsilon,n}(x,t)\!+\!H_{x}\!(x,T\!-\!t)\!-\!f_{sx}\!(x,T\!-\!t,T\!-\!t)\!+\!\alpha_{\epsilon,n}\!(c\!(T\!-\!t)\!-\!v^{\epsilon,n}\!(x,t))\!=\!0, (\!x\!,\!t\!)\!\in\! Q^{n}_{T},\\
&v^{\epsilon,n}(-n,t)=\tilde{F}'(-n),\ v^{\epsilon,n}(n,t)=c(T-t), t\in(0,T],\\
&v^{\epsilon,n}(x,0)=\tilde{F}'(x), x\in[-n,n].
\end{aligned}
\right.
\end{equation}
The penalty function $\alpha_{\epsilon,n}(\cdot)=C_{n}\alpha_{\epsilon}(\cdot)\in C^{\infty}(\mathbb{R}),\epsilon>0$, satisfies
\begin{align*}
&\alpha_{\epsilon}(0)=-1,\ \alpha_{\epsilon}(z)\le 0,\ \alpha'_{\epsilon}(z)\ge 0,\ \alpha''_{\epsilon}(z)\le 0,\ \forall z\in\mathbb{R},\\
&\lim\limits_{\epsilon\rightarrow 0}\alpha_{\epsilon}(z)=0,\ \forall z>0;\ \lim\limits_{\epsilon\rightarrow 0}\alpha_{\epsilon}(z)=-\infty,\ \forall z<0,
\end{align*}
 and $C_{n}$ are positive constants satisfying $C_{n}<\min\limits_{(t,x)\in \overline{Q^{n}_{T-s}}}[H_{x}\!(x,T\!-\!t)\!-\!f_{sx}\!(x,T\!-\!t,T\!-\!t)].$

\begin{theorem}
	\label{thpb}
	Under Assumption \ref{all}, for any $f\in\mathcal{R}_{0}$, Problem (\ref{vpb}) has a  $W^{2,1}_{p}(Q^{n}_{T})$ $\bigcap C^{2+\alpha,1+\frac{\alpha}{2}}(Q^{n}_{T})$-solution $v^{\epsilon,n}$. Moreover, for $n$ sufficiently large, 
	\begin{align}
	&\tilde{F}'(x)\le v^{\epsilon,n}(x,t)\le c(T-t),\forall (x,t)\in Q^{n}_{T},\label{v-esti}\\
 &v^{\epsilon,n}(x,t)\le C\kappa(x,T-t), \forall (x,t)\in[-n,-1]\times[0,T],\label{v-esti2}\\
	&0\le v^{\epsilon,n}_{x}(x,t), 0\le v^{\epsilon,n}_{t}(x,t),\forall (x,t)\in Q^{n}_{T},\label{vx-esti}
	\end{align}
	where $C$ is some constant independent of $n$.
\end{theorem}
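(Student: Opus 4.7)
The plan is to prove Theorem \ref{thpb} in three passes: existence via classical semi-linear parabolic theory; the two-sided estimate (\ref{v-esti}) and the $\kappa$-bound (\ref{v-esti2}) by super/sub-solution arguments; and the monotonicities (\ref{vx-esti}) by differentiating the equation and invoking the parabolic maximum principle. All three parts rely on comparison and on the sign information encoded in Assumption \ref{all}.

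For existence, note that (\ref{vpb}) is semi-linear in $v$ with smooth, monotone nonlinearity (since $\alpha'_{\epsilon,n}\ge 0$); its linear coefficients lie in $C^{\alpha,\alpha/2}(\overline{Q^n_T})$ by Assumption \ref{all}(a), and the source $H_x(x,T-t)-f_{sx}(x,T-t,T-t)$ has the same regularity by the defining properties of $\mathcal{R}_0$. For $n$ large the boundary data $\tilde{F}'(\pm n), c(T-t)$ satisfy the required compatibility at the corners. The Leray-Schauder fixed-point theorem combined with linear $L^p$ parabolic theory then yields a $W^{2,1}_p(Q^n_T)$-solution, and Schauder estimates upgrade it to $C^{2+\alpha,1+\alpha/2}(Q^n_T)$.

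For the bound (\ref{v-esti}), I would test $w(x,t)=c(T-t)$ as a super-solution using (\ref{c6}), $\alpha_{\epsilon,n}(0)=-C_n$, and $f_{sx}\ge 0$, and test $u(x,t)=\tilde{F}'(x)$ as a sub-solution using $\mathcal{L}\tilde{F}'\ge 0$ from (\ref{c6}) together with the choice $C_n<\min(H_x-f_{sx})$, which keeps the penalty term in $[-C_n,0]$ whenever $c-u\ge 0$. The parabolic-boundary values are matched automatically by the setup and by $c(T)\le c(T-t)$ from (\ref{t1}). For (\ref{v-esti2}), I would test $C\kappa(x,T-t)$ as a super-solution on $[-n,-1]\times(0,T]$, invoking (\ref{c9}) for the operator action and (\ref{c10}) for the initial datum; the lateral boundary at $x=-1$ is dominated by choosing $C$ large enough that $C\kappa(-1,T-t)\ge c(T-t)\ge v^{\epsilon,n}(-1,t)$, using the sup-bound just proved.

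For the monotonicities, differentiate (\ref{vpb}) in $x$ to obtain a linear parabolic equation $\tilde{\mathcal{L}}(v^{\epsilon,n}_x)=g$ whose zeroth-order coefficient $\sigma_x^2+\sigma\sigma_{xx}+2\mu_x-\alpha'_{\epsilon,n}(c-v)$ is $\le 0$ by (\ref{c5}) and $\alpha'\ge 0$, and whose source $g=-\mu_{xx}v-H_{xx}+f_{sxx}$ is $\le 0$ by (\ref{c134}), $v\ge\tilde{F}'\ge 0$, and $f_{sxx}\le H_{xx}$ from $\mathcal{R}_0$. On the parabolic boundary, $v^{\epsilon,n}_x\ge 0$: at $t=0$ by convexity of $\tilde{F}$, and at $x=\pm n$ through the one-sided inequalities $v^{\epsilon,n}(x,t)-v^{\epsilon,n}(-n,t)\ge\tilde{F}'(x)-\tilde{F}'(-n)\ge 0$ and $v^{\epsilon,n}(n,t)-v^{\epsilon,n}(x,t)\ge 0$, both consequences of the previously established two-sided bound. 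The weak maximum principle then yields $v^{\epsilon,n}_x\ge 0$. Proceeding analogously for $v^{\epsilon,n}_t$, differentiating (\ref{vpb}) in $t$ produces a linear equation whose zeroth-order coefficient $\mu_x-\alpha'_{\epsilon,n}(c-v)$ is non-positive; on the parabolic boundary, $v^{\epsilon,n}_t\ge 0$ via $-c'(T-t)\ge 0$ at $x=n$ from (\ref{t1}), the constancy of the Dirichlet datum at $x=-n$, and the formula $v^{\epsilon,n}_t(x,0)=\mathcal{L}\tilde{F}'(x)+H_x(x,T)-f_{sx}(x,T,T)+\alpha_{\epsilon,n}(c(T)-\tilde{F}'(x))\ge 0$ read off the PDE at $t=0$. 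The interior source of the $v^{\epsilon,n}_t$-equation is then controlled via (\ref{t1}), (\ref{t2}), the already-proven $v^{\epsilon,n}_x\ge 0$, and the $\mathcal{R}_0$-regularity of $f_{sx}$ in $t$, with the sign-indefinite term $\sigma\sigma_t v^{\epsilon,n}_{xx}$ absorbed by substituting the original PDE to replace $v^{\epsilon,n}_{xx}$. This final sign bookkeeping—which is precisely what the intricate structure of Assumption \ref{all}(c) is designed to enable, and which must also cope with the merely Hölder regularity of $f_{sx}$ in $t$ (perhaps via a smoothing approximation on $f$)—is the principal obstacle.
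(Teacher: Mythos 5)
Your overall route is the same as the paper's: a fixed-point argument for existence of the penalized solution, comparison with $\tilde F'$, $c(T-t)$ and $C\kappa$ for (\ref{v-esti})--(\ref{v-esti2}), and differentiation plus the maximum principle for (\ref{vx-esti}). Two issues remain.

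First, the existence step as written is circular. The penalty $\alpha_{\epsilon,n}$ is only assumed concave, non-decreasing and $\le 0$, so it may be unbounded below, and a Leray--Schauder (or Schauder) argument needs the bound $\tilde F'\le v\le c(T-t)$ --- hence $\alpha_{\epsilon,n}(c-v)\in[-C_n,0]$ --- for \emph{every} solution of the homotopy family before the solution map is compact; you derive that bound only after existence. The paper avoids this by first replacing $\alpha_{\epsilon,n}$ with the truncation $\alpha_{\epsilon,n,N}=\max\{\alpha_{\epsilon,n},-N\}$, solving the truncated problem, and then showing via a minimum-point argument on $L(x,t)=\alpha_{\epsilon,n,N}(c(T-t)-v^{\epsilon,n,N}(x,t))$ together with (\ref{c6}) that $L\ge -C_n$, so the truncation is inactive once $N>C_n$. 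Your version is repairable by running the sub/supersolution comparison uniformly over the homotopy family, but this must be said explicitly.

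Second, and more substantively, the claim $v^{\epsilon,n}_t\ge 0$ is not actually established: you correctly identify the two obstacles --- the sign-indefinite term $\sigma\sigma_t v^{\epsilon,n}_{xx}$ and the fact that $\mathcal{R}_0$ controls $f_{sx}(x,T-t,T-t)$ only in $C^{\alpha,\frac{\alpha}{2}}$, so the source need not be differentiable in $t$ --- and then stop at calling them ``the principal obstacle.'' Substituting the PDE to eliminate $v^{\epsilon,n}_{xx}$ produces a bounded zeroth-order coefficient proportional to $-\sigma_t/\sigma\ge 0$ (harmless on $Q^n_T$ after an $e^{\lambda t}$ rescaling), but also the lower-order contribution proportional to $-(\sigma\sigma_x+\mu)v^{\epsilon,n}_x-\mu_x v^{\epsilon,n}-H_x+f_{sx}-\alpha_{\epsilon,n}$, in which $-H_x+f_{sx}\le 0$ and the $v^{\epsilon,n}_x$ term have uncontrolled sign under Assumption \ref{all}; and neither $\mathcal{R}_0$ nor Assumption \ref{all} provides the monotonicity of $H_x(x,T-t)-f_{sx}(x,T-t,T-t)$ in $t$ that a time-shift comparison of $v^{\epsilon,n}(\cdot,\cdot+\delta)$ against $v^{\epsilon,n}$ would require. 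So this portion of (\ref{vx-esti}) is a genuine gap in your write-up. (The paper itself dismisses all of Step 3 with one sentence, so it offers no more detail than you do; but a complete proof must close exactly the difficulty you have named rather than defer it.)
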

\begin{proof}
	{\bf Step 1.}
	For all $N>0$, we define $\alpha_{\epsilon,n,N}(z)=\max\{\alpha_{\epsilon,n}(z),-N\},\ \forall z\in\mathbb{R}$. Then there exists a $W^{2,1}_{p}(Q^{n}_{T})\bigcap C^{2+\alpha,1+\frac{\alpha}{2}}(Q^{n}_{T})$-solution $v^{\epsilon,n,N}$ to (\ref{vpb}) with $\alpha_{\epsilon,n}$ replaced by $\alpha_{\epsilon,n,N}$. The proof of this step is a standard application of $L^{p}$ theory of the initial boundary value problem (Theorem 1.1 of \cite{wang2021nonlinear}) and Schauder's fixed point theorem (Theorem 3 of \cite{evans2022partial} Sect. 9.2.2). We omit the details.
	
	{\bf Step 2.} We prove that for $N$ sufficiently large, $v^{\epsilon,n,N}$ becomes independent of $N$ and is a $W^{2,1}_{p}(Q^{n}_{T})\bigcap C^{2+\alpha,1+\frac{\alpha}{2}}(Q^{n}_{T})$-solution to (\ref{vpb}).
	
	Define $L(x,t)=\alpha_{\epsilon,n,N}(c(T-t)-v^{\epsilon,n,N}(x,t)),\ \forall(x,t)\in Q^{n}_{T}$, and let $(x_0,t_0)$ be a minimum point of $L(x,t)$ in $Q^{n}_{T}$. Suppose $L(x_0,t_0)<-C_{n}$, then by an argument similar to the proof of maximum principle, we have
	\begin{align*}
	-L(x_0,t_0)=&\mathcal{L}v^{\epsilon,n,N}(x_0,t_0)+H_{x}(x_0,T-t_0)-f_{sx}(x_0,T-t_0,T-t_0)\\
	\le&\mathcal{L}c(T-t_0)+H_{x}(x_0,T-t_0)-f_{sx}(x_0,T-t_0,T-t_0),
	\end{align*}
	where the right-hand side is non-positive by (\ref{c6}), contradicting the hypothesis. Hence, for $N$ larger than $C_{n}$, $v^{\epsilon,n}:=v^{\epsilon,n,N}$ is a $C^{2+\alpha,1+\frac{\alpha}{2}}(Q^{n}_{T})$ solution to (\ref{vpb}).
	
	{\bf Step 3.} Properties (\ref{v-esti})$\sim$(\ref{vx-esti}) follows directly by using the comparison principle and Assumption \ref{all}.
\end{proof}

\begin{theorem}
	\label{exist-v}
	Under Assumption \ref{all}, for any $f\in\mathcal{R}_{0}$, there exists a \\$W^{2,1}_{p}(Q_{T})\bigcap C^{1,0}(\overline{Q_{T}})$-solution $v$ to Problem (\ref{eqv}) satisfying
	\begin{align*}
	&0\le v(x,t)\le C\kappa(x,T-t),\ \forall (x,t)\in (-\infty,-1]\times[0,T],\\
	&v_{x}(x,t)\ge 0, v_{t}(x,t)\ge 0,\ \forall (x,t)\in Q_{T}.
	\end{align*}
\end{theorem}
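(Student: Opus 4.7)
The plan is to construct $v$ as the iterated limit of the penalized, truncated solutions $v^{\epsilon,n}$ from Theorem \ref{thpb}. First, with $n$ fixed, I would send $\epsilon \downarrow 0$ to obtain a solution $v^n$ of the obstacle problem on the bounded cylinder $Q^n_T$; then I would send $n \to \infty$ to obtain $v$ on the full strip $Q_T$. The uniform pointwise estimates $\tilde F'(x)\le v^{\epsilon,n}\le c(T-t)$, $v^{\epsilon,n}\le C\kappa$, $v^{\epsilon,n}_x, v^{\epsilon,n}_t\ge 0$ from Theorem \ref{thpb} will serve as the backbone for both passages.

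For the first limit, rewrite the penalty term as
\begin{equation*}
-\alpha_{\epsilon,n}\bigl(c(T-t)-v^{\epsilon,n}(x,t)\bigr) = \mathcal{L}v^{\epsilon,n}(x,t)+H_x(x,T-t)-f_{sx}(x,T-t,T-t).
\end{equation*}
The right-hand side is bounded in $L^\infty(Q^n_T)$ uniformly in $\epsilon$, because $v^{\epsilon,n}$ is trapped between $\tilde F'$ and $c$, its monotonicity in $t$ and $x$ limits the relevant derivatives in a weak sense, and $H_x-f_{sx}$ is bounded on $\overline{Q^n_T}$ under Assumption~\ref{all} and the constraints in $\mathcal{R}_0$. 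Applying the interior $L^p$-estimate to the linear equation $\mathcal{L}v^{\epsilon,n}=\text{bounded}-\alpha_{\epsilon,n}$ gives a bound on $\|v^{\epsilon,n}\|_{W^{2,1}_p(K)}$ uniform in $\epsilon$ for every compact $K\subset Q^n_T$; since $p>3$, the Sobolev embedding $W^{2,1}_p\hookrightarrow C^{1+\alpha,(1+\alpha)/2}$ yields precompactness in $C^{1,0}$. A standard diagonal/penalty argument then produces a subsequential limit $v^n\in W^{2,1}_{p,\mathrm{loc}}(Q^n_T)\cap C^{1,0}(\overline{Q^n_T})$ that solves the obstacle problem on $Q^n_T$ with the prescribed boundary data, and $v^n$ inherits all the inequalities (\ref{v-esti})--(\ref{vx-esti}).

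For the second limit, the same interior $L^p$-estimates applied to the obstacle equation for $v^n$, together with the uniform-in-$n$ pointwise and monotonicity bounds, yield $W^{2,1}_p$-bounds on $v^n$ on every compact $K\subset Q_T$ independent of $n$. A diagonal extraction produces a limit $v$ with $v^{n_k}\to v$ in $C^{1,0}_{\mathrm{loc}}$ and $v^{n_k}\rightharpoonup v$ weakly in $W^{2,1}_{p,\mathrm{loc}}(Q_T)$. The $C^{1,0}$-convergence preserves the obstacle inequality $c(T-t)-v\ge 0$ and the complementarity, while weak $W^{2,1}_p$-convergence lets us pass to the limit in $\mathcal{L}v^{n_k}+H_x-f_{sx}\ge 0$ with equality on the waiting region, showing $v$ solves (\ref{eqv}) in the a.e.\ sense. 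The initial condition $v(x,0)=\tilde F'(x)$ survives since it holds for each $v^n$ and convergence is uniform on compact subsets of $\overline{Q_T}$; the estimates $0\le v\le C\kappa$ on $(-\infty,-1]\times[0,T]$ and the monotonicity $v_x,v_t\ge 0$ on $Q_T$ pass to the limit pointwise.

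The principal obstacle is arranging the $L^p$ compactness to be uniform simultaneously in $\epsilon$ and in $n$, without appealing to boundary regularity at $x=\pm n$, which would degenerate as $n\to\infty$. Interior estimates suffice precisely because the monotonicity in $x$ forces $v^n$ to saturate the obstacle $c(T-t)$ only for $x$ beyond some free boundary, while the decay bound $v^n(x,t)\le C\kappa(x,T-t)$ with $\kappa(\cdot,t)\in L^1((-\infty,-1])$ controls the integrability as $x\to-\infty$. A secondary point is the consistency of the obstacle problem with the initial datum $\tilde F'$ at the corners, which is handled by the choice of $n$ so that $\tilde F'(n)=c(T)$ and by the one-sided bound $\tilde F'(x)\le c(T-t)$. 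Once these are in place the verification that $v$ lies in the claimed space reduces to a routine check that the limit retains the integrability and continuity of the approximants.
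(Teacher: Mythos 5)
Your proposal follows essentially the same route as the paper: use the penalized, truncated solutions $v^{\epsilon,n}$ from Theorem \ref{thpb}, obtain $W^{2,1}_p$-bounds on compact subsets via interior $L^p$-estimates (uniform in $\epsilon$ and $n$), and pass to the limit by a diagonal extraction. The only organizational difference is that you take the two limits sequentially ($\epsilon\downarrow 0$ first, then $n\to\infty$), while the paper performs a single diagonal extraction in which $\epsilon^{(m)}_k\downarrow 0$ and $n^{(m)}_k\uparrow\infty$ simultaneously over the exhausting family $Q^{(m)}$; both are legitimate.

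There is one genuine flaw in your justification of the key uniform bound, however. You assert that $-\alpha_{\epsilon,n}(c-v^{\epsilon,n})=\mathcal{L}v^{\epsilon,n}+H_x-f_{sx}$ is bounded in $L^\infty(Q^n_T)$ uniformly in $\epsilon$ ``because $v^{\epsilon,n}$ is trapped between $\tilde F'$ and $c$, its monotonicity in $t$ and $x$ limits the relevant derivatives in a weak sense.'' Monotonicity of $v^{\epsilon,n}$ in $x$ and $t$ gives only one-sided sign information on first derivatives; it does not control $v^{\epsilon,n}_t$, $v^{\epsilon,n}_x$, or $v^{\epsilon,n}_{xx}$ in magnitude, so it cannot bound $\mathcal{L}v^{\epsilon,n}$ directly, and the argument as stated is circular (you need the $L^p$ estimate you are trying to prove in order to control $\mathcal{L}v^{\epsilon,n}$). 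The correct route, implicit in the paper, is the maximum-principle argument already carried out in Step 2 of the proof of Theorem \ref{thpb}: evaluating the penalized equation at a minimizer of $(x,t)\mapsto\alpha_{\epsilon,n}(c(T-t)-v^{\epsilon,n}(x,t))$ and using $\mathcal{L}c\le 0$ together with inequality (\ref{c6}) yields the lower bound $\alpha_{\epsilon,n}\ge -C_n$ directly, without passing through $\mathcal{L}v^{\epsilon,n}$. Moreover $C_n$ is non-increasing in $n$ (since $C_n<\min_{\overline{Q^n_{T-s}}}[H_x-f_{sx}]$ and the minimum shrinks as $n$ grows), so the penalty-term bound is uniform in both $\epsilon$ and $n$. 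Once you replace your derivative-bounding heuristic with this argument, the rest of your proof (interior $L^p$-estimate, Sobolev embedding $W^{2,1}_p\hookrightarrow C^{1+\alpha,(1+\alpha)/2}$ for $p>3$, diagonal extraction, passage to the limit in the complementarity conditions and in the pointwise bounds) goes through and matches the paper.
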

\begin{proof}
	Set $Q^{(m)}=[a^{(m)},b^{(m)}]\times(t^{(m)},T],\forall m\ge 1$ with $a^{(m)}\downarrow-\infty,b^{(m)}\uparrow+\infty,t^{(m)}\downarrow 0$.
	For any $m$, set $N^{(m)}>\max\{-a^{(m)},b^{(m)}\}$. By interior $L^{p}$ estimate (Theorem 1.9 of \cite{wang2021nonlinear}), we have, for $n\ge N^{(m+1)}$,
	\begin{align*}
	\Vert v^{\epsilon,n}\Vert_{W^{2,1}_{p}(Q^{(m)})}
	\le\tilde{C}\Big[&\Vert H_{x}(x,T-t)\Vert_{L^{p}(Q^{(m+1)})}+\Vert f_{sx}(x,T-t,T-t)\Vert_{L^{p}(Q^{(m+1)})}\\
 &+1+\Vert c(T-t)\Vert_{L^{p}(Q^{(m+1)})}\Big],
	\end{align*}
	where $\tilde{C}$ is independent of $\epsilon$ and $n$. By Embedding theorem, $\Vert v^{\epsilon,n}\Vert_{C^{1+\alpha,\frac{1+\alpha}{2}}(\overline{Q^{(m)}})}$ is also bounded by constant independent of $\epsilon$ and $n$.
	Hence, for $m=1$, there exists a sequence $v^{\epsilon^{(1)}_{k},n^{(1)}_{k}},k\ge 1$ with $\epsilon^{(1)}_{k}\downarrow 0, N^{(2)}\le n^{(1)}_{k}\uparrow\infty$ satisfying $v^{\epsilon^{(1)}_{k},n^{(1)}_{k}}\rightarrow v^{(1)}$  weakly in $W^{2,1}_{p}(Q^{(1)})$, strongly in $C^{1+\alpha,\frac{1+\alpha}{2}}(\overline{Q^{(1)}})$ for some $v^{(1)}$.
	For $m=2$, there exists a subsequence of $v^{\epsilon^{(1)}_{k},n^{(1)}_{k}},k\ge 1$ denoted by $v^{\epsilon^{(2)}_{k},n^{(2)}_{k}},k\ge 1$ with $\epsilon^{(2)}_{k}\downarrow 0, N^{(3)}\le n^{(2)}_{k}\uparrow\infty$ satisfying $v^{\epsilon^{(2)}_{k},n^{(2)}_{k}}\rightarrow v^{(2)}, k\rightarrow\infty$ weakly in $W^{2,1}_{p}(Q^{(2)})$, strongly in $C^{1+\alpha,\frac{1+\alpha}{2}}(\overline{Q^{(2)}})$ for some $v^{(2)}$. We can repeat the above process and find $v^{\epsilon^{(m)}_{k},n^{(m)}_{k}},k\ge 1$ and $v^{(m)}$ for each $m\ge 1$ satisfying the corresponding properties.
	 By the above proof, we have that $v^{(m)},m\ge 1$ are consistent. Hence, we can define $v(x,t)=v^{(m)}(x,t), \forall(x,t)\in Q^{(m)},m\ge 1$. Then $v\in W^{2,1}_{p}(Q_{T})\bigcap C^{1,0}(\overline{Q_{T}})$.

	By a standard proof in the penalty method, $v$ solves (\ref{eqv}). Hence, we conclude that $v$ is a $W^{2,1}_{p}(Q_{T})\bigcap C^{1,0}(\overline{Q_{T}})$ solution to (\ref{eqv}). The inequalities are obvious in light of Theorem \ref{thpb}.
\end{proof}

		Now we can define the waiting-purchasing boundary $\Gamma$ given by $v$ in Theorem \ref{exist-v}. As $v_{x}(x,t)\ge 0$, the function $x\mapsto c(T-t)-v(x,t)$ is non-increasing. Hence, for any fixed $t\in[0,T]$, there exists
\begin{displaymath}
\Gamma(t):=\inf\{x|c(T-t)-v(x,t)=0\}.
\end{displaymath}

\begin{theorem}
	\label{exist-g}
Under Assumption \ref{all}, for any $f\in\mathcal{R}_{0}$, Problem (\ref{eqg}) has a $C^{2,1}(Q_{T})$-solution $g(x,t):=\int_{-\infty}^{x}v(z,t)dz$, where $v$ is a solution to (\ref{eqv}) given in Theorem \ref{exist-v}.
\end{theorem}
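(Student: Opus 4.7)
The plan is to verify directly that $g(x,t):=\int_{-\infty}^{x}v(z,t)\,dz$ is well-defined, belongs to $C^{2,1}(Q_T)$, and satisfies the obstacle problem (\ref{eqg}). Well-definedness follows from Theorem \ref{exist-v}: since $0\le v(z,t)\le C\kappa(z,T-t)$ on $(-\infty,-1]\times[0,T]$ with $\kappa(\cdot,t)\in L^{1}((-\infty,-1])$, the integral converges absolutely. Differentiating in $x$ yields $g_{x}=v$ and $g_{xx}=v_{x}$, both continuous because $v\in C^{1,0}(\overline{Q_{T}})$. The initial condition $g(x,0)=\int_{-\infty}^{x}\tilde F'(z)\,dz=\tilde F(x)$ follows from $\tilde F(x)=F(x)$ for $x$ sufficiently negative (where the minimum in the definition of $\tilde F$ is attained at $a=0$) together with $F(-\infty)=0$ from Assumption \ref{all}(b).

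The core identity driving the PDE verification is $(\tilde{\mathcal{A}}g)_{x}=\mathcal{L}v$, which I would check by computing $(\tilde{\mathcal{A}}g)_{x}=-g_{xt}+\mu_{x}g_{x}+(\mu+\sigma\sigma_{x})g_{xx}+\tfrac{1}{2}\sigma^{2}g_{xxx}$ and substituting $g_{x}=v$, $g_{xx}=v_{x}$, $g_{xxx}=v_{xx}$, $g_{xt}=v_{t}$. Combined with (\ref{eqv}), this gives $\partial_{x}[\tilde{\mathcal{A}}g+H-f_{s}]=\mathcal{L}v+H_{x}-f_{sx}$, which vanishes on the waiting region $W=\{c(T-t)>v(x,t)\}$ and is nonnegative on $P=\{c(T-t)=v(x,t)\}$. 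Since $v_{x}\ge 0$, each time-slice of $W$ is a left-infinite interval $(-\infty,\Gamma(t))$ on which $\tilde{\mathcal{A}}g+H-f_{s}$ is constant in $x$. To pin this constant to $0$, I would rewrite $\mathcal{L}v=-v_{t}+\partial_{x}[\tfrac{1}{2}\sigma^{2}v_{x}+\mu v]$ and integrate $\mathcal{L}v+H_{x}-f_{sx}=0$ from $-\infty$ to $x$, which yields $\tilde{\mathcal{A}}g+H-f_{s}=0$ on $W$ once the boundary terms $\tfrac12\sigma^{2}v_{x}+\mu v$, $H$ and $f_{s}$ are shown to vanish as $x\to-\infty$ (using $v\le C\kappa\in L^{1}$ and $v$ nondecreasing in $x$, together with Assumption \ref{all}(b) and $\mathcal{R}_{0}$). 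Continuity of $\tilde{\mathcal{A}}g+H-f_{s}$ across the free boundary $\Gamma(t)$ then propagates the value $0$ to the $P$ side, and the monotonicity in $x$ gives $\tilde{\mathcal{A}}g+H-f_{s}\ge 0$ throughout $P$; since $c-g_{x}=c-v>0$ on $W$ and $=0$ on $P$, the obstacle equation in (\ref{eqg}) holds.

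The main technical obstacle is establishing $g\in C^{2,1}(Q_{T})$, specifically the continuity of $g_{t}$, which cannot be read off directly from $v\in W^{2,1}_{p}\cap C^{1,0}$. On the interior of $W$, interior Schauder-type estimates applied to the linear parabolic equation $\mathcal{L}v=f_{sx}-H_{x}$ (whose coefficients lie in $C^{\alpha,\alpha/2}$ by Assumption \ref{all}(a) and the definition of $\mathcal{R}_{0}$) upgrade $v$ to $C^{2+\alpha,1+\alpha/2}$ locally, so $v_{t}$ is continuous there. On the interior of $P$, $v\equiv c(T-t)$ gives $v_{t}=-c'(T-t)$, also continuous. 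Across the free boundary $\Gamma(t)$, smooth-pasting follows from $v_{x}\ge 0$ throughout $Q_{T}$ together with $v_{x}\equiv 0$ on the interior of $P$ and the $C^{1,0}$ regularity of $v$, which forces $v_{x}|_{\Gamma}=0$ and hence matches the two one-sided limits of $v_{t}$. Passing $\partial_{t}$ under the integral defining $g$ is then justified by dominated convergence, using bounds on $v_{t}$ inherited from the PDE combined with the tail decay $v\le C\kappa$ to control contributions from $x=-\infty$.
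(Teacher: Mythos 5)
Your overall strategy coincides with the paper's: integrate the equation (\ref{eqv}) for $v$ in $x$ from $-\infty$ to recover the obstacle problem (\ref{eqg}) for $g$, after showing that boundary contributions at $x=-\infty$ vanish, and upgrade regularity so that $g\in C^{2,1}$. But there are two places where your argument as written does not close.

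First, you claim that $\tfrac12\sigma^{2}v_{x}+\mu v\to 0$ as $x\to-\infty$ ``using $v\le C\kappa\in L^{1}$ and $v$ nondecreasing in $x$, together with Assumption \ref{all}(b) and $\mathcal{R}_{0}$.'' Monotonicity and the $L^{1}$-decay of $v$ give $v\to 0$, but they do not give $v_{x}\to 0$: a nonnegative integrable $v_{x}$ can oscillate without tending to zero, and $\sigma$ is not assumed bounded in Assumption \ref{all}. This is not a cosmetic point; it is precisely what the paper's Feynman--Kac step is for. The paper differentiates (\ref{eqv}) once more, writes $v_{x}$ as $\mathbb{E}_{x,T-t}\big[\tilde F''(X^{v}_{T})+\int_{T-t}^{T}(H_{xx}-f_{sxx}+\mu_{xx}v+(\sigma\sigma_{xx}+\sigma_{x}^{2}+2\mu_{x})v_{x})\,dr\big]$ under the auxiliary diffusion (\ref{Xv}), and then invokes $\lim_{x\to-\infty}H_{xx}=0$, $\lim_{x\to-\infty}\mu_{xx}\kappa=0$ and $0\le f_{sxx}\le H_{xx}$ from $\mathcal R_0$ to force $v_{x}\to 0$. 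Without this (or a substitute), the constant of integration in your ``pin the constant to $0$'' step is not identified, and $v_x,v_{xx}\in L^1((-\infty,0])$ (which the paper also needs to get $v_t\in L^1$ so that $g_t=\int_{-\infty}^x v_t$) is not established.

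Second, your smooth-pasting argument — from $v_{x}|_{\Gamma}=0$ you conclude ``hence matches the two one-sided limits of $v_{t}$'' — tacitly uses differentiation of the identity $v(\Gamma(t),t)=c(T-t)$ along the free boundary, which requires $\Gamma$ differentiable and $v$ to be $C^{1}$ in $(x,t)$ up to $\Gamma$ from the $W$-side. Neither is established at this point (Proposition \ref{gamma} only gives $\Gamma\in C^{0}$). This is exactly the regularity the paper does not re-derive but delegates to Friedman's Corollary 4.2 for the continuity of $v_{t}$. So while your plan to use interior Schauder estimates on $W$ and the explicit form $v\equiv c(T-t)$ on $P$ is sound away from the free boundary, the matching across $\Gamma$ needs either the cited reference or an independent free-boundary regularity argument, not just $v_{x}|_{\Gamma}=0$.
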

\begin{proof}
	$g$ is well-defined as $0\le v(x,t)\le C\kappa(x,T-t)$.
 
	As a byproduct, we have $v(x,t)<c(T-t)$ when $x$ is sufficiently small. By differentiating (\ref{eqv}) and It{\^o}'s formula, for x sufficiently small,
	\begin{align*}
	v_{x}(t,x)=&\mathbb{E}_{x,T-t}\bigg\{\tilde{F}''(X^{v}_{T})\!+\!\int_{T-t}^{T}\Big[H_{xx}\!(X^{v}_{r},r)\!-\!f_{sxx}\!(X^{v}_{r},r,r)\!+\!\mu_{xx}\!(X^{v}_{r},r)v\!(X^{v}_{r},T\!-\!r)\\&+(\sigma(X^{v}_{r},r)\sigma_{xx}(X^{v}_{r},r)+\sigma_{x}^{2}(X^{v}_{r},r)+2\mu_{x}(X^{v}_{r},r))v_{x}(X^{v}_{r},T-r)\Big]dr\bigg\},
	\end{align*}
	where $X^{v}$ follows the dynamic
	\begin{equation}
 \label{Xv}
	\left\{
	\begin{aligned}
	&dX^{v}_{r}=(2\sigma(X^{v}_{r},r)\sigma_{x}(X^{v}_{r},r)+\mu(X^{v}_{r},r))dr+\sigma(X^{v}_{r},r)dB_{r},\ r\in[T-t,T],\\
	&X^{v}_{t}=x.
	\end{aligned}
	\right.
	\end{equation}
Hence,
\begin{displaymath}
0\le v_{x}(t,x)\le \mathbb{E}_{x,T-t}\bigg\{\tilde{F}''(X^{v}_{T})+\int_{T-t}^{T}\Big[H_{xx}(X^{v}_{r},r)+\mu_{xx}(X^{v}_{r},r)v(X^{v}_{r},T-r)\Big]dr\bigg\}.
\end{displaymath}
Note that the expectation term corresponds to an objective with terminal cost $x\mapsto\tilde{F}''(x)$ and running cost $(x,t)\mapsto H_{xx}(x,t)+\mu_{xx}(x,t)v(x, T-t)$. Then, by (b) of Assumption \ref{all}, we have $\lim\limits_{x\rightarrow-\infty}v_{x}(x,t)=0$. Hence, we obtain $\lim\limits_{x\rightarrow-\infty}g(x,t)\!=\!0,\ \lim\limits_{x\rightarrow-\infty}g_{x}(x,t)\!$ $=\!0,\ \lim\limits_{x\rightarrow-\infty}g_{xx}(x,t)\!=\!0$ and $v,v_x,v_{xx}$ are all in $L^{1}((-\infty,0])$. As $f\in\mathcal{R}_{0}$, we have $H_{x}(x,T-t)-f_{sx}(x,T-t,T-t)\in L^{1}((-\infty,0])$. As such, we have $v_{t}\in L^{1}((-\infty,0])$. Thus, $g$ is differentiable w.r.t $t$ with $g_{t}(x,t)=\int_{-\infty}^{x}v_{t}(z,t)dz$. Similar to the proof of Corollary 4.2 in \cite{friedman1975parabolic}, we have that $v_t$ is continuous and thus $g\in C^{2,1}(Q_{T})$ with
		\begin{equation*}
	\lim\limits_{x\rightarrow-\infty}g(x,t)=0,\ \lim\limits_{x\rightarrow-\infty}g_{x}(x,t)=0,\ \lim\limits_{x\rightarrow-\infty}g_{xx}(x,t)=0,\ \lim\limits_{x\rightarrow-\infty}g_{t}(x,t)=0.
	\end{equation*}
	
Therefore, we can integrate (\ref{eqv}) from $x=-\infty$ and obtain that $g$ solves (\ref{eqg}).
\end{proof}

Then we have the existence result for {\bf Problem 1}:
\begin{corollary}
	\label{exist-p1}
	Under Assumption \ref{all}, for any $f\in\mathcal{R}_{0}$, {\bf Problem 1} has a classical solution $V(x,t)=g(x,T-t)$.
\end{corollary}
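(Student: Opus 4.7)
The plan is to derive the corollary directly from Theorem \ref{exist-g} by the time-reversal substitution $\tau = T-t$. Given $f \in \mathcal{R}_0$, Theorem \ref{exist-g} supplies a solution $g \in C^{2,1}(Q_T)$ to problem (\ref{eqg}), where $Q_T = \mathbb{R}\times(0,T]$. I would then define
\[
V(x,t) := g(x,\, T-t), \qquad (x,t) \in \mathbb{R} \times [0,T],
\]
and verify that $V$ is a classical solution of \textbf{Problem 1}.

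The only real step is a chain-rule computation. From $V_t(x,t) = -g_t(x,T-t)$, $V_x(x,t) = g_x(x,T-t)$, and $V_{xx}(x,t) = g_{xx}(x,T-t)$, one gets
\[
(\mathcal{A}V)(x,t) = -g_t(x,T-t) + \mu(x,t)\,g_x(x,T-t) + \tfrac{1}{2}\sigma^2(x,t)\,g_{xx}(x,T-t) = (\tilde{\mathcal{A}}g)(x,\,T-t).
\]
Evaluating the obstacle equation in (\ref{eqg}) at the point $(x,\,T-t) \in Q_T$ (valid for each $t \in [0,T)$) then yields
\[
\min\bigl\{(\mathcal{A}V)(x,t) + H(x,t) - f_s(x,t,t),\ c(t) - V_x(x,t)\bigr\} = 0,
\]
which is exactly the interior equation of \textbf{Problem 1}; I would also note that $H(x,T-(T-t)) = H(x,t)$, $c(T-(T-t)) = c(t)$, and $f_s(x, T-(T-t), T-(T-t)) = f_s(x,t,t)$ to make the substitution airtight. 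The terminal condition follows from the initial condition of (\ref{eqg}): $V(x,T) = g(x,0) = \tilde{F}(x)$.

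The regularity is inherited for free: since $g \in C^{2,1}(Q_T)$, one obtains $V \in C^{2,1}(\mathbb{R}\times[0,T))$, which is what is required of a classical solution to \textbf{Problem 1}. I do not foresee any genuine obstacle at this step; the entire analytical weight of the result lives in Theorems \ref{thpb}, \ref{exist-v}, and \ref{exist-g} (bounded-domain penalty approximation, $L^p$/Schauder-based compactness in $n$ and $\epsilon$, and the reconstruction $g(x,t) = \int_{-\infty}^x v(z,t)\,dz$ together with the decay of $v$ at $-\infty$). The corollary is simply a rewriting of that solution in the forward-time variable used to state \textbf{Problem 1}.
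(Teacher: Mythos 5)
Your proposal is correct and matches what the paper intends: the corollary is stated without proof precisely because it is the immediate time-reversal $t\mapsto T-t$ of Theorem \ref{exist-g}, and your chain-rule verification that $(\mathcal{A}V)(x,t)=(\tilde{\mathcal{A}}g)(x,T-t)$ together with $V(x,T)=g(x,0)=\tilde{F}(x)$ is exactly the omitted argument. No gaps.
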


Now we focus on the uniqueness of $v$. First, we introduce some useful properties of the waiting purchasing boundary $\Gamma$.

\begin{proposition}
	\label{gamma}
	The waiting-purchasing boundary $\Gamma(\cdot)$ takes finite values and is in $C^{0}\big([0,T]\big)$.
\end{proposition}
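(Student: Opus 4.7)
The plan is to split the statement into finiteness of $\Gamma(t)$ and continuity, and treat continuity through separate one-sided estimates.

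\textbf{Finiteness.} First I would pin down $\Gamma(0)$. Under Assumption \ref{all}(b), $F$ is convex with $F'(x)\to+\infty$, so there is a unique $x^{*}$ with $F'(x^{*})=c(T)$. A direct computation from the first-order condition defining $\tilde F$ shows $\tilde F'(x)=c(T)$ on $[x^{*},\infty)$, which gives $v(x,0)=\tilde F'(x)=c(T-0)$ on $[x^{*},\infty)$ and hence $\Gamma(0)\le x^{*}<\infty$. For $t>0$, I would rule out the scenario that $v(\cdot,t)<c(T-t)$ on all of $\mathbb{R}$. If this held, the entire line $\{(x,t):x\in\mathbb{R}\}$ would lie in the waiting region and $v(\cdot,t)$ would satisfy $\mathcal{L}v+H_{x}-f_{sx}=0$ with strictly positive source $H_{x}-f_{sx}>0$ (from $f\in\mathcal{R}_{0}$). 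Combined with $v\ge\tilde F'$, monotonicity $v_{t}\ge 0$, and the sign conditions in (\ref{c134})--(\ref{c6}), a comparison/maximum-principle argument against the constant barrier $c(T-t)$ forces $v$ to reach $c(T-t)$ at some finite $x$, yielding $\Gamma(t)<\infty$.

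\textbf{Lower semi-continuity.} Since $v\in C^{1,0}(\overline{Q_{T}})$ (Theorem \ref{exist-v}) and $c$ is continuous, the purchasing region $P_{0}:=\{(x,t):v(x,t)=c(T-t)\}\subset\mathbb{R}\times[0,T]$ is closed. For any $t_{n}\to t^{*}$, pick a subsequence along which $\Gamma(t_{n})\to L:=\liminf_{t\to t^{*}}\Gamma(t)$ (the case $L=+\infty$ being trivial); then $(\Gamma(t_{n}),t_{n})\in P_{0}$, closedness yields $(L,t^{*})\in P_{0}$, and therefore $\Gamma(t^{*})\le L$.

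\textbf{Upper semi-continuity.} Assume toward contradiction that some $t^{*}$ admits $t_{n}\to t^{*}$ with $\Gamma(t_{n})\to L>\Gamma(t^{*})$. Pick $x_{0}\in(\Gamma(t^{*}),L)$. Since $v_{x}\ge 0$ and $v\le c(T-t)$, one has $v(x,t^{*})=c(T-t^{*})$ for all $x\ge\Gamma(t^{*})$, so $v(x_{0},t^{*})=c(T-t^{*})$ and $v_{x}(x_{0},t^{*})=0$. On the other hand $x_{0}<\Gamma(t_{n})$ eventually, so $(x_{0},t_{n})\in W$ and $v(x_{0},t_{n})<c(T-t_{n})$. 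Inside $W$, $v$ solves $\mathcal{L}v+H_{x}-f_{sx}=0$ with $H_{x}-f_{sx}>0$; applying a Hopf-type boundary lemma (or equivalently building an explicit parabolic sub-barrier based on the strictly positive source and the non-degeneracy supplied by (\ref{c6})--(\ref{c8})) yields a strictly positive lower bound on $v_{x}$ at the contact boundary, contradicting $v_{x}(x_{0},t^{*})=0$.

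\textbf{Main obstacle.} The delicate step is the upper semi-continuity argument: ruling out a jump in the free boundary requires exploiting the strict positivity $H_{x}-f_{sx}>0$ built into $\mathcal{R}_{0}$ together with the non-degeneracy inequalities in Assumption \ref{all}(c). The finiteness proof for $t>0$ shares the same flavor, since it also requires leveraging the source term's sign against the constant-in-$x$ obstacle; the construction of a suitable barrier compatible with the growth/sign conditions is where I expect most of the technical effort to go.
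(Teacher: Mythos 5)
Your decomposition into finiteness plus two one-sided semi-continuity estimates is sensible, and the lower semi-continuity argument via closedness of the contact set is correct (and is essentially the content of the paper's terse remark that $v\in C^{1,0}(\overline{Q_T})$ gives $\Gamma\in C^0$). However, there are gaps in the other parts.

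On finiteness, two issues. You never establish $\Gamma(t)>-\infty$; since $\Gamma(t)$ is an infimum this is a genuine half of the claim. The paper gets it immediately from the bound $0\le v(x,t)\le C\kappa(x,T-t)$ of Theorem~\ref{exist-v} together with $\kappa(\cdot,t)\in L^1((-\infty,-1])$, which forces $v<c(T-t)$ for $x$ sufficiently negative. For $\Gamma(t)<+\infty$ with $t>0$, the comparison against the constant barrier $c(T-t)$ is not enough as stated: from Assumption~\ref{all} and $f\in\mathcal{R}_0$ one gets $\mathcal{L}\bigl(c(T-t)-v\bigr)\le 0$ on $W$ with $c-v\ge 0$, i.e.\ a nonnegative supersolution, and a comparison principle does not force such a function to vanish at finite $x$; a priori $\lim_{x\to+\infty}v(x,t)$ could remain strictly below $c(T-t)$. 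The paper takes a different route here: by Theorem~\ref{exist-g} and Corollary~\ref{exist-p1}, $V$ is the value function of a time-consistent singular control problem with terminal loss $F$, and the assumed convexity of $a\mapsto F(x-a)+c(t)a$ with finite minimizer shows the optimal control jumps immediately for $x$ large, giving $(x,t)\in P$ and hence $\Gamma(t)<+\infty$.

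The more serious gap is in upper semi-continuity. A parabolic Hopf lemma applied to the nonnegative supersolution $c(T-t)-v$ would yield a strictly positive inward normal derivative at $(x_0,t^*)$ only under an interior-parabolic-ball condition on $W$ at that point, and you have not verified it: you know $\Gamma(t_n)>x_0$ only along a \emph{sequence} $t_n\to t^*$, so $W$ may be arbitrarily thin near $(x_0,t^*)$ and the ball condition can fail outright. There is also a structural warning sign: the smooth-fit regularity $v\in C^{1,0}(\overline{Q_T})$ from Theorem~\ref{exist-v} already forces $v_x\equiv 0$ on $P$, so a Hopf-type bound $v_x>0$ cannot hold at regular free-boundary points; any invocation of Hopf at a putative jump point must explain precisely why the interior-ball geometry is present there while absent at regular points, and your argument does not close that loop. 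Closing it most plausibly uses the $t$-monotonicity $v_t\ge 0$ together with $c'\le 0$ rather than a normal-derivative estimate.
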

\begin{proof}
	From $v(x,t)\le C\kappa(x,T-t)$, we have $\Gamma(t)>-\infty$. 
	
    By Theorem \ref{exist-g}, Corollary \ref{exist-p1}, we have $\Gamma(t)=\inf\big\{x\big|c(T-t)-V_{x}(x,T-t)=0\big\}$, where $V$ is the value function of a time-consistent singular control problem with discount $t\mapsto 1$, running loss $(x,t)\mapsto H(x,t)-f_{s}(x,t,t)$ and terminal loss $x\mapsto F(x)$. By our assumption that $a\mapsto F(x-a)+c(t)a$ is convex with a finite minimum point, we obtain that $\xi$ jumps immediately for sufficiently large initial $x$. Hence, $\Gamma(t)<+\infty$.
	
	As $v\in C^{1,0}(\overline{Q_{T}})$, we have $\Gamma\in C^{0}\big([0,T]\big)$.
\end{proof}

\begin{theorem}
	\label{uniq-v}
	In Theorem \ref{exist-v}, the solution $v\in W^{2,1}_{p}(Q_{T})\bigcap C^{1,0}(\overline{Q_{T}})$ satisfying
	\begin{align*}
	&0\le v(x,t)\le C\kappa(x,T-t),\ \forall (x,t)\in Q_{T},\\
	&v_{x}(x,t)\ge 0,\ \forall (x,t)\in Q_{T}
	\end{align*}
	is unique.
\end{theorem}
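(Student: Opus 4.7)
The plan is to prove uniqueness by a direct comparison argument inside the non-coincidence set, combined with the parabolic maximum principle applied to the linearised equation obtained by differencing. Suppose $v_1,v_2$ are two solutions in the stated class and set $w := v_1 - v_2 \in W^{2,1}_{p}(Q_T) \cap C^{1,0}(\overline{Q_T})$. I will show $w \le 0$ on $Q_T$; by symmetry the same holds for $v_2-v_1$, and uniqueness follows.

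First I would establish that $w$ is a linear parabolic sub-solution on the open set $\{w>0\}$. On this set, $v_2 < v_1 \le c(T-t)$ forces $v_2$ to lie strictly below the obstacle, so $\mathcal{L}v_2 + H_x - f_{sx} = 0$ there; since $v_1$ always satisfies $\mathcal{L}v_1 + H_x - f_{sx}\ge 0$ (whether it is in the waiting region or at the obstacle), subtraction gives $\mathcal{L}w\ge 0$ on $\{w>0\}$. Written out, this makes $w$ a sub-solution of the forward parabolic operator $\partial_t - \tfrac{1}{2}\sigma^2\partial_{xx} - (\sigma\sigma_x+\mu)\partial_x - \mu_x$, whose zeroth-order coefficient $-\mu_x$ is non-negative by Assumption \ref{all}(c). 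This is precisely the sign configuration needed for the standard strong maximum principle.

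Next I would control $w$ on the extended parabolic boundary of $\{w>0\}$. At $t=0$, $w\equiv 0$ because both solutions share the initial datum $\tilde{F}'$. As $x\to -\infty$, the pointwise bound $v_i(x,t)\le C\kappa(x,T-t)$ combined with $\kappa(\cdot,t) \in L^1((-\infty,-1])$ forces $\kappa(x,T-t)\to 0$, and hence $w\to 0$ uniformly in $t$. For $x$ large, Proposition \ref{gamma} supplies finite waiting-purchasing boundaries $\Gamma_i(t)$ and $v_i\equiv c(T-t)$ for $x\ge\Gamma_i(t)$, so $w=0$ for $x\ge\max\{\Gamma_1(t),\Gamma_2(t)\}$. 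Consequently, for every $\delta>0$ the super-level set $\{w\ge\delta\}$ is relatively compact in $\overline{Q_T}$, so any positive supremum of $w$ would be attained at an interior point of $\{w>0\}$. The sub-solution inequality, together with $-\mu_x\ge 0$, forbids this via the usual interior maximum argument, so $w\le 0$ on $Q_T$.

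The main difficulty I anticipate is the unboundedness of the spatial domain: without lateral boundary data the parabolic maximum principle requires both a growth/decay control and a sign-definite zeroth-order coefficient. The two-sided behaviour of $w$ (vanishing at $t=0$, uniform decay as $x\to-\infty$ via the $\kappa$-bound, and identically zero for $x$ beyond $\max\Gamma_i$) handles the former, while Assumption \ref{all}(c)'s condition $\mu_x\le 0$ handles the latter. Once these two ingredients are in place, the remainder of the argument is a routine differencing-plus-maximum-principle proof.
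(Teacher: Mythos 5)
Your proof follows the same route as the paper's: set $w=v_1-v_2$, observe that on the set where $w>0$ the lower solution $v_2$ must be strictly below the obstacle so both $v_i$ satisfy $\mathcal{L}v_i+H_x-f_{sx}\ge 0$ with equality for $v_2$, giving $\mathcal{L}w\ge 0$ there; then use decay at $x\to\pm\infty$ plus the parabolic maximum principle (with $\mu_x\le 0$) to rule out a positive interior supremum. The paper implements this by fixing $\epsilon_0$ strictly between $0$ and $v_1(x_0,t_0)-v_2(x_0,t_0)$ and working on the connected component $S_0$ of $\{w>\epsilon_0\}$, which makes the boundary values literally equal $\epsilon_0$ and sidesteps any sup-attainment discussion; you instead argue via relative compactness of super-level sets of $w$. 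These are equivalent devices.

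One small inaccuracy worth fixing: you deduce $\kappa(x,T-t)\to 0$ as $x\to -\infty$ from $\kappa(\cdot,t)\in L^1((-\infty,-1])$, but integrability alone does not force pointwise decay. What actually gives $v_i(x,t)\to 0$ as $x\to -\infty$ is the combination $0\le v_i\le C\kappa$ (hence $v_i(\cdot,t)\in L^1((-\infty,-1])$) together with $v_{ix}\ge 0$: a nonnegative, nondecreasing, integrable function on a left half-line must tend to $0$. So the monotonicity hypothesis in the theorem statement, which you otherwise only use to invoke $\Gamma_i$ and the plateau $v_i\equiv c(T-t)$ for large $x$, is also what secures the decay at $-\infty$. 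With that correction, your argument is sound and matches the paper's.
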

\begin{proof}
	Suppose that two different $v_{1},v_{2}\in W^{2,1}_{p}(Q_{T})\bigcap C^{1,0}(\overline{Q_{T}})$ solves (\ref{eqv}) with properties
	\begin{align*}
	&0\le v_{i}(x,t)\le C\kappa(x,T-t),\ \forall (x,t)\in Q_{T},i=1,2,\\
	&v_{ix}(x,t)\ge 0,\ \forall (x,t)\in Q_{T},i=1,2.
	\end{align*}
	We assume without loss of generality that there exists $(x_0,t_0)$ such that $v_{1}(x_0,t_0)>v_{2}(x_0,t_0)$. Set $0<\epsilon_{0}<v_{1}(x_0,t_0)-v_{2}(x_0,t_0)$. Let $S_0$ be the biggest connected open set containing $(x_0,t_0)$ on which $v_1-v_2>\epsilon_{0}$ holds.
 By $0\le v_{i}(x,t)\le C\kappa(x,T-t),\ \forall (x,t)\in Q_{T},i=1,2$, and Proposition \ref{gamma}, for $i=1,2$,
	\begin{displaymath}
	\lim\limits_{x\rightarrow-\infty}v_{i}(x,t)=0, \lim\limits_{x\rightarrow+\infty}v_{i}(x,t)=c(T-t),\forall t\in[0,T].
	\end{displaymath}
	Hence, $v_{1}=v_{2}+\epsilon_{0}$ on $\partial S_{0}\bigcap\{(x,t)|t<T\}$.

	Using the fact that $\mathcal{L}v_{2}(x,t)=-H_{x}(x, T-t)+f_{sx}(x, T-t, T-t)\le \mathcal{L}v_{1}(x,t)$, by an argument similar to the maximum principle, we have $v_{1}\le v_{2}+\epsilon_{0}$ on $S_{0}$, contradicting $v_{1}(x_0,t_0)>v_{2}(x_0,t_0)+\epsilon_{0}$. Thus, the uniqueness holds.
\end{proof}

\subsection{Second sub-problem}\label{subproblem2}
In this subsection, we study the existence of solutions to the following sub-problem:

{\bf Problem 2:} For each $s\in[0,T)$, given $V(x,t)$ as the solution of {\bf Problem 1}, define
\begin{align*}
&W:=\{(x,t)\in\mathbb{R}\times[s,T)|c(t)-V_{x}(x,t)>0\},\\
&P:=\{(x,t)\in\mathbb{R}\times[s,T)|c(t)-V_{x}(x,t)=0\},
\end{align*}
and solve the following terminal value problem for $f^{s}(x,t)$:
\begin{equation*}
	\left\{
	\begin{aligned}
		&(\mathcal{A}f^{s})(x,t)+\beta(t-s)H(x,t)=0, \forall (x,t)\in \overline{W},\\
		&\beta(t-s)c(t)-f_{x}^{s}(x,t)=0,\forall (x,t)\in P,\\
		&f^{s}(x,T)=\beta(T-s)\tilde{F}(x),\forall x\in\mathbb{R}.
	\end{aligned}
	\right.
\end{equation*}

By Theorem \ref{exist-v} and Theorem \ref{uniq-v}, we can define the mapping $\mathcal{Y}_{1}$ that maps any $f\in\mathcal{R}_{0}$ to the unique solution $v$ of (\ref{eqv}). Moreover, we define 
\begin{align*}
\mathcal{R}_{1}=\big\{v(x,t)\big|&v(x,t)\in W^{2,1}_{p}(Q_{T})\bigcap C^{1,0}(\overline{Q_{T}}),\\
&0\le v(x,t)\le C\kappa(x,T-t),\ \forall (x,t)\in (-\infty,-1]\times[0,T],\\
&v(x,t)\le c(T-t),\ \forall (x,t)\in Q_{T},\\
	&v_{x}(x,t)\ge 0,\ \forall (x,t)\in Q_{T}\big\}.
\end{align*}
Then $\mathcal{Y}_{1}(\mathcal{R}_{0})\subset\mathcal{R}_{1}$.
Similarly to {\bf Problem 1}, define $g^{s}(x,t)=f^{s}(x,T-t)$, and then the problem for $g^{s}$ is as follows. \\ Given $v\in\mathcal{R}_{1}$,
\begin{equation}
\label{eqgs}
\left\{
\begin{aligned}
&(\mathcal{\tilde{A}}g^{s})(x,t)+\beta(T-s-t)H(x,T-t)=0, \forall (x,t)\in \overline{W_{T-s}},\\
&\beta(T-t-s)c(T-t)-g_{x}^{s}(x,t)=0,\forall (x,t)\in P_{T-s},\\
&g^{s}(x,0)=\beta(T-s)\tilde{F}(x),\ \forall x\in\mathbb{R},
\end{aligned}
\right.
\end{equation}
where
\begin{align*}
&W_{T-s}:=\{(x,t)\in\mathbb{R}\times(0,T-s]|x<\Gamma(t)\},\\
&P_{T-s}:=\{(x,t)\in\mathbb{R}\times(0,T-s]|x\ge\Gamma(t)\}.
\end{align*}
and $\Gamma$ is determined by $\Gamma(t)=\inf\{x|c(T-t)=v(x,t)\}$.

We solve Problem (\ref{eqgs}) separately in $\overline{W_{T-s}}$ and $P_{T-s}$. By the smoothness of $f$ in Theorem \ref{verif}, we impose the following boundary condition:
\begin{displaymath}
g^{s}_{x}(\Gamma(t),t)=\beta(T-t-s)c(T-t), \forall t\in(0,T-s].
\end{displaymath}

First, we solve Problem (\ref{eqgs}) in $\overline{W_{T-s}}$, define $h^{s}(x,t)=g^{s}(x+\Gamma(t),t)$, and then the equations for $h^{s}$ are
\begin{equation}
\label{eqh}
\left\{
\begin{aligned}
&\bar{\mathcal{A}}h^{s}(x,t)+\beta(T-s-t)H(x+\Gamma(t),T-t)=0,\ \forall(x,t)\in Q^{-}_{T-s},\\
&h^{s}(x,0)=\beta(T-s)\tilde{F}(x+\Gamma(t)),\ \forall x\in(-\infty,0],\\
&h^{s}_{x}(0,t)=\beta(T-t-s)c(T-t),\ \forall t\in(0,T-s],
\end{aligned}
\right.
\end{equation}
where 
\begin{equation*}
\bar{\mathcal{A}}h^{s}(x,t):=-h^{s}_{t}(x,t)+[\Gamma'(t)+\mu(x+\Gamma(t),T-t)]h^{s}_{x}(x,t)+\frac{1}{2}\sigma^{2}(x+\Gamma(t),T-t)h^{s}_{xx}(x,t).
\end{equation*} 
We may assume that $\Gamma$ is $C^{1}$, otherwise we can use a sequence of $C^{1}$ function $\Gamma^{n}$ to approach $\Gamma$. The equations for $k^{s}(x,t):=h^{s}_{x}(x,t)$ are
\begin{equation}
\label{eqk}
\left\{
\begin{aligned}
&\bar{\mathcal{L}}k^{s}(x,t)+\beta(T-s-t)H_{x}(x+\Gamma(t),T-t)=0,\ \forall(x,t)\in Q^{-}_{T-s},\\
&k^{s}(x,0)=\beta(T-s)\tilde{F}'(x+\Gamma(t)),\ \forall x\in(-\infty,0],\\
&k^{s}(0,t)=\beta(T-t-s)c(T-t),\ \forall t\in(0,T-s],
\end{aligned}
\right.
\end{equation}
where
\begin{align*}
\bar{\mathcal{L}}k^{s}\!(x,t)\!:=&\!-\!k^{s}_{t}\!(x,t)\\&\!+\![\mu\!(x\!+\!\Gamma\!(t),T\!-\!t)\!+\!\sigma\!(x\!+\!\Gamma\!(t),T\!-\!t)\sigma_{x}\!(x\!+\!\Gamma\!(t),T\!-\!t)+\Gamma'\!(t)]k^{s}_{x}\!(x,t)\\&+\frac{1}{2}\sigma^{2}(x+\Gamma(t),T-t)k^{s}_{xx}(x,t)+\mu_{x}(x+\Gamma(t),T-t)k^{s}(x,t).
\end{align*} 
The approximation problem for (\ref{eqk}) is 
\begin{equation}
\label{bk}
\left\{
\begin{aligned}
&\bar{\mathcal{L}}k^{s,n}(x,t)+\beta(T-s-t)H_{x}(x+\Gamma(t),T-t)=0,\ \forall(x,t)\in Q^{n,-}_{T-s},\\
&k^{s,n}\!(0,t)\!=\!\beta\!(T\!-\!t\!-\!s)c\!(T\!-\!t),\ k^{s,n}\!(\!-\!n,t)\!=\!\beta\!(T\!-\!s)\tilde{F}'\!(\!-\!n\!+\!\Gamma\!(t)),\ \forall t\!\in\!(0,T\!-\!s],\\
&k^{s,n}(x,0)=\beta(T-s)\tilde{F}'(x+\Gamma(0)),\ \forall x\in[-n,0].
\end{aligned}
\right.
\end{equation}

\begin{theorem}
Under Assumption \ref{all}, for any $v\in\mathcal{R}_{1}$, there exists a unique $W^{2,1}_{p}(Q^{n,-}_{T-s})\bigcap C^{2+\alpha,1+\frac{\alpha}{2}}$ $(Q^{n,-}_{T-s})$-solution $k^{s,n}$ to Problem (\ref{bk}) and it holds that, for $n$ sufficiently large,
\begin{align}
&\beta(T\!-\!s)\tilde{F}'(-\!n\!+\!\Gamma(t))\le k^{s,n}(x,t)
\le \beta(T\!-\!t\!-\!s)c(T\!-\!t),\forall (x,t)\in Q_{T\!-\!s}^{n,-},\label{k1}\\
&k^{s,n}(x,t)\le C\kappa(x\!+\!\Gamma(t),T\!-\!t),\forall t\in[0,T\!-\!s],x\in[-\!n,0\wedge(-\!1-\!\Gamma(t))],\label{k2}\\
&0\le k^{s,n}_{x}(x,t),\forall (x,t)\in Q^{n,-}_{T-s},\label{k3}
\end{align}
where $C$ is some constant independent of $n$.
\end{theorem}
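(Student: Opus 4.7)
The plan is to treat Problem (\ref{bk}) as a linear parabolic problem on the bounded cylinder $Q^{n,-}_{T-s}$ with Dirichlet data---no penalty is needed here, in contrast with Problem (\ref{vpb}), since (\ref{bk}) is already linear---and to derive the three estimates by comparison with tailor-made sub/supersolutions built from the data. By Assumption \ref{all}(a) and since we may take $\Gamma\in C^{1}$ via smooth approximation (as invoked just before (\ref{eqk})), the coefficients of $\bar{\mathcal{L}}$ and the source $\beta(T-s-t)H_{x}(x+\Gamma(t),T-t)$ lie in $C^{\alpha,\alpha/2}(\overline{Q^{n,-}_{T-s}})$. Compatibility of initial and lateral data at the corners follows from the identity $\tilde F'(\Gamma(0))=c(T)$, which is the defining property of $\Gamma(0)$ via $v(\cdot,0)=\tilde F'(\cdot)$. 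Standard $L^{p}$ theory (Theorem 1.1 of \cite{wang2021nonlinear}) yields a unique $W^{2,1}_{p}$-solution, and Schauder theory upgrades it to $C^{2+\alpha,1+\alpha/2}(Q^{n,-}_{T-s})$.

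For the upper bound in (\ref{k1}) I would take the $x$-independent supersolution $\bar k(x,t):=\beta(T-t-s)c(T-t)$. Direct computation gives
\[
\bar{\mathcal{L}}\bar k+\beta(T-s-t)H_{x}=\beta(T-t-s)\!\left[\tfrac{\beta'(T-t-s)}{\beta(T-t-s)}c(T-t)+c'(T-t)+\mu_{x}c(T-t)+H_{x}\right]\!,
\]
and since $\beta'/\beta\le 0\le\max_{\tau\in[0,T-t]}\{-\beta'(\tau)/\beta(\tau)\}$ with $c\ge 0$, the bracket is majorised by $\max_{\tau}\{-\beta'/\beta\}c+c'+\mu_{x}c+H_{x}\le 0$ by (\ref{c6}); the parabolic boundary comparison uses $\tilde F'\le c(T)\le c(T-t)$ and the monotonicity of $\beta$. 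For the lower bound I would use the subsolution $\underline k(x,t):=\beta(T-s)\tilde F'(x+\Gamma(t))$: because $\tilde F'$ depends on $(x,t)$ only through the shifted variable, the $-\Gamma'\tilde F''$ coming from $-\underline k_{t}$ cancels the $\Gamma'$ contribution in $\underline k_{x}$, and one obtains
\[
\bar{\mathcal{L}}\underline k+\beta(T-s-t)H_{x}=\beta(T-s)\,\mathcal{L}\tilde F'(x+\Gamma(t),T-t)+\beta(T-s-t)H_{x}(x+\Gamma(t),T-t)\ge 0
\]
by (\ref{c6}). The lateral comparison uses that $\Gamma$ is non-increasing in $t$ (a consequence of $v_{t}\ge 0$ in $\mathcal{R}_{1}$ and $c'\le 0$, which make $c(T-t)-v(x,t)$ non-increasing in $t$), giving $\tilde F'(\Gamma(t))\le\tilde F'(\Gamma(0))=c(T)\le c(T-t)$. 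The parabolic maximum principle then yields $\underline k\le k^{s,n}\le\bar k$, and the monotonicity of $\tilde F'$ together with $x\ge -n$ produces (\ref{k1}).

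For (\ref{k2}), on the strip where $x+\Gamma(t)\le -1$ I would take the supersolution $\phi(x,t):=C\kappa(x+\Gamma(t),T-t)$; the same cancellation as above gives $\bar{\mathcal{L}}\phi=C\,\mathcal{L}\kappa(x+\Gamma(t),T-t)$, so (\ref{c9}) yields $\bar{\mathcal{L}}\phi+\beta H_{x}\le 0$ once $C$ is chosen large in terms of $M$, independently of $n$. Assumption (\ref{c10}) supplies the required boundary comparison on the initial line and left lateral side, after which the maximum principle gives (\ref{k2}) with $C$ independent of $n$.

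The main obstacle is (\ref{k3}). Differentiating the PDE for $k^{s,n}$ in $x$, the derivative $w:=k^{s,n}_{x}$ satisfies
\[
-w_{t}+\tfrac12\sigma^{2}w_{xx}+[\mu+2\sigma\sigma_{x}+\Gamma']w_{x}+[2\mu_{x}+\sigma_{x}^{2}+\sigma\sigma_{xx}]w=-\mu_{xx}k^{s,n}-\beta H_{xx}.
\]
By (\ref{c134}), (\ref{c2}) and $k^{s,n}\ge 0$, the right-hand side is non-positive, and by (\ref{c5}) the zeroth-order coefficient is non-positive; the initial datum $w(x,0)=\beta(T-s)\tilde F''(x+\Gamma(0))$ is non-negative by convexity of $\tilde F$. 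The delicate point is that the Dirichlet data on $k^{s,n}$ do not directly prescribe $w$ on the lateral sides. I would resolve this by applying Hopf's lemma to the super/subsolution gaps from the previous step: $\bar k-k^{s,n}\ge 0$ vanishes on $\{x=0\}$ and is strictly positive in the interior (the supersolution inequality from (\ref{c6}) being generically strict), so Hopf forces $k^{s,n}_{x}(0,t)>0$; similarly $k^{s,n}-\underline k\ge 0$ vanishes on $\{x=-n\}$ with strict positivity in the interior, and Hopf gives $k^{s,n}_{x}(-n,t)>\beta(T-s)\tilde F''(-n+\Gamma(t))\ge 0$. This yields $w\ge 0$ on the entire parabolic boundary, and the weak maximum principle applied to $-w$ (with non-positive zeroth-order coefficient and non-positive right-hand side) then forces $w\ge 0$ throughout $Q^{n,-}_{T-s}$, completing the proof.
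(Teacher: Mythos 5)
Your proposal is in substance the paper's own proof: the paper performs the change of variables $l(x,t)=k^{s,n}(x-\Gamma(t),t)$ so that $\bar{\mathcal{L}}$ becomes $\mathcal{L}$ and then compares $l$ with $\beta(T-t-s)c(T-t)$, $\beta(T-s)\tilde F'(-n+\Gamma(t))$ and $C\kappa(x,T-t)$ using (\ref{c6}), (\ref{c9}), (\ref{c10}), which is exactly your ``cancellation of the $\Gamma'$ terms'' computed in the unshifted variables; for (\ref{k3}) the paper likewise differentiates the equation and uses non-negativity of $l_x$ on the parabolic boundary together with (\ref{c134}), (\ref{c5}). Two small repairs to your write-up: first, the lateral comparison $\beta(T-s)\tilde F'(\Gamma(t))\le\beta(T-t-s)c(T-t)$ at $x=0$ should not be routed through ``$\Gamma$ non-increasing,'' because membership in $\mathcal{R}_1$ only guarantees $v_x\ge0$, not $v_t\ge0$, and likewise does not force $v(\cdot,0)=\tilde F'(\cdot)$, so neither monotonicity of $\Gamma$ nor $\tilde F'(\Gamma(0))=c(T)$ is available; the inequality follows instead from $\tilde F'(x)\le c(T)$ for every $x$ (a direct consequence of $\tilde F(x+a)\le\tilde F(x)+c(T)a$, which comes from the definition of $\tilde F$ as an infimal convolution) together with $c'\le0$ and $\beta$ non-increasing. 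Second, Hopf's lemma is unnecessary and your appeal to the supersolution inequality being ``generically strict'' is not a proof; the non-strict sign $k^{s,n}_x\ge0$ on the lateral boundaries, which is all the differentiated-equation argument needs, follows from the elementary first-order condition at a one-sided extremum of the gap functions $\bar k-k^{s,n}$ at $x=0$ and $k^{s,n}-\underline k$ at $x=-n$.
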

\begin{proof}
	Existence and uniqueness follow by the $L^p$ theory for the first initial-boundary value problem. We only need to prove the inequalities (\ref{k1})$\sim$(\ref{k3}).
	
	Let $l(x,t)=k^{s,n}(x-\Gamma(t),t),\forall t\in (0,T-s],x\in [-n+\Gamma(t),\Gamma(t)]$, then we have
	\begin{equation*}
	\left\{
	\begin{aligned}
	&\mathcal{L}l(x,t)
	+\beta(T-s-t)H_{x}(x,T-t)=0, t\in (0,T-s],x\in [-n+\Gamma(t),\Gamma(t)],\\
	&l(\!-\!n\!+\!\Gamma\!(t),t)\!=\!\beta\!(T\!-\!s)\tilde{F}'\!(\!-\!n\!+\!\Gamma\!(t)),\ l(\Gamma\!(t),t)\!=\!\beta\!(T\!-\!t\!-\!s)c\!(T\!-\!t), t\!\in\!(0,T\!-\!s],\\
	&l(x,0)=\beta(T-s)\tilde{F}'(x), x\in[-n+\Gamma(0),\Gamma(0)].
	\end{aligned}
	\right.
	\end{equation*}
	Thus, for $n$ sufficiently large, $\mathcal{L}l(x,t)\le \mathcal{L}(\beta(T-s)\tilde{F}'(-n+\Gamma(t)))$ and we have $l(x,t)\ge \beta(T-s)\tilde{F}'(-n+\Gamma(t))$ by the comparison principle.
	By (\ref{c6}) of Assumption \ref{all}, $\mathcal{L}l(x,t)\ge\mathcal{L}(\beta(T-t-s)c(T-t))$ and we obtain $l(x,t)\le \beta(T-t-s)c(T-t)$ using the comparison principle again. Now we have $l_{x}(x,t)\ge 0$ at the boundaries, by (\ref{c134}) and (\ref{c5}) of Assumption \ref{all} and the comparison principle, we have $l_{x}(x,t)\ge 0$.
	
 	Define $C=\max\{M,\max\limits_{t\in [0,T-s]}c(T-t)\}$. Then, for $n$ sufficiently large, $\mathcal{L}l(x,t)\ge\mathcal{L}(C\kappa(x,T-t))$
 and we obtain $l(x,t)\le C\kappa(x,T-t)$ for $t\in[0,T-s],x\in[-n+\Gamma(t),-1\wedge\Gamma(t)]$.
 
 Thus, using the transformation $k^{s,n}(x,t)=l(x+\Gamma(t),t)$, we obtain all the required inequalities.
\end{proof}
\begin{theorem}
	\label{exist-k}
	Under Assumption \ref{all}, for any $v\in\mathcal{R}_{1}$, Problem (\ref{eqk}) has a unique solution $k^{s}\in C^{2+\alpha,1+\frac{\alpha}{2}}(Q^{-}_{T-s})\bigcap C^{1,0}(\overline{Q^{-}_{T-s}})$ satisfying
	\begin{align*}
&0\le k^{s}(x,t)
\le \beta(T-t-s)c(T-t),\forall (x,t)\in Q^{-}_{T-s},\\
&k^{s}(x,t)\le C\kappa(x+\Gamma(t),T-t),\forall t\in[0,T-s],x\in[-\infty,0\wedge(-1-\Gamma(t))],\\
&0\le k^{s}_{x}(x,t),\forall (x,t)\in Q^{-}_{T-s}.
\end{align*}
\end{theorem}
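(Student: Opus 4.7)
The plan is to obtain $k^{s}$ by taking a limit $n\to\infty$ of the bounded-domain solutions $k^{s,n}$ constructed in the previous theorem, exactly parallel to how Theorem~\ref{exist-v} is deduced from Theorem~\ref{thpb}. Because the a priori bounds (\ref{k1})--(\ref{k3}) are uniform in $n$ on any strip $[-m,0]\times[0,T-s]$ for $m$ fixed, a diagonal extraction will give a limit in the right function spaces, and a standard comparison argument will yield uniqueness.

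More precisely, I would first fix an exhaustion $Q^{(m)}=[a^{(m)},0]\times(t^{(m)},T-s]$ with $a^{(m)}\downarrow-\infty$ and $t^{(m)}\downarrow 0$. On each $Q^{(m)}$ the coefficients of $\bar{\mathcal{L}}$ are bounded with the needed H\"older regularity (thanks to Assumption~\ref{all}(a), the smoothness of $\Gamma$ that we may assume, and $v\in\mathcal{R}_{1}$), and the source term $\beta(T-s-t)H_{x}(x+\Gamma(t),T-t)$ lies in $L^{p}(Q^{(m+1)})$. Interior $L^{p}$ estimates (Theorem~1.9 of \cite{wang2021nonlinear}) give
\begin{equation*}
\Vert k^{s,n}\Vert_{W^{2,1}_{p}(Q^{(m)})}\le \tilde{C}\Big[\Vert H_{x}(\cdot+\Gamma(\cdot),T-\cdot)\Vert_{L^{p}(Q^{(m+1)})}+\Vert k^{s,n}\Vert_{L^{\infty}(Q^{(m+1)})}\Big],
\end{equation*}
with $\tilde{C}$ independent of $n$ for $n$ large; the $L^\infty$ norm is controlled by (\ref{k1}) and (\ref{k2}). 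By the parabolic embedding the family is bounded in $C^{1+\alpha,(1+\alpha)/2}(\overline{Q^{(m)}})$, so Arzel\`a--Ascoli together with weak compactness in $W^{2,1}_{p}$ yields a subsequence converging on $Q^{(1)}$; iterating in $m$ and extracting a diagonal subsequence gives $k^{s,n_{k}}\to k^{s}$ on every $Q^{(m)}$. The limit solves (\ref{eqk}) in the strong sense, inherits (\ref{k1})--(\ref{k3}) by pointwise passage to the limit (using that $\tilde{F}'(x)\to 0$ as $x\to-\infty$ by (b) of Assumption~\ref{all} together with convexity of $F$, so the lower bound in (\ref{k1}) degenerates to $0$), and once interior $C^{2+\alpha,1+\alpha/2}$ is in hand the Schauder theory bootstraps this on every compact subset of $Q^{-}_{T-s}$. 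Continuity up to the parabolic boundary (for the $C^{1,0}(\overline{Q^{-}_{T-s}})$ claim) follows from the uniform H\"older estimates and compatibility of the boundary/initial data at the corner.

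For uniqueness, I would mimic the proof of Theorem~\ref{uniq-v}. If $k_{1}^{s},k_{2}^{s}$ are two solutions in the stated class with $k_{1}^{s}(x_{0},t_{0})>k_{2}^{s}(x_{0},t_{0})$, pick $0<\varepsilon_{0}<k_{1}^{s}(x_{0},t_{0})-k_{2}^{s}(x_{0},t_{0})$ and let $S_{0}$ be the maximal connected open set containing $(x_{0},t_{0})$ on which $k_{1}^{s}-k_{2}^{s}>\varepsilon_{0}$. The two functions agree on the initial slice $t=0$, on the lateral boundary $x=0$, and the bound $k_{i}^{s}(x,t)\le C\kappa(x+\Gamma(t),T-t)\to 0$ as $x\to-\infty$ forces $k_{1}^{s}-k_{2}^{s}\to 0$ at $-\infty$. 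Hence on $\partial S_{0}\cap\{t<T-s\}$ we have $k_{1}^{s}=k_{2}^{s}+\varepsilon_{0}$, and since $\bar{\mathcal{L}}(k_{1}^{s}-k_{2}^{s})=0$ in $S_{0}$ the comparison principle (applicable because the zeroth-order coefficient $\mu_{x}\le 0$ by (\ref{c134})) yields $k_{1}^{s}\le k_{2}^{s}+\varepsilon_{0}$ on $S_{0}$, a contradiction.

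The main obstacle I expect is the behavior at $x\to-\infty$: both the passage to the limit and the uniqueness rely on showing that the various $n$-dependent lower bounds and the limiting solution decay suitably at $-\infty$, and that this decay is strong enough to close the comparison argument. The uniform upper bound $C\kappa(x+\Gamma(t),T-t)$ inherited from (\ref{k2}), together with $\kappa(\cdot,t)\in L^{1}((-\infty,-1])$ and the vanishing of $\tilde{F}'$ at $-\infty$, is exactly what makes both steps work; the delicate point is verifying that the auxiliary assumption $\Gamma\in C^{1}$ used in writing $\bar{\mathcal{L}}$ can be removed by approximating $\Gamma$ by $C^{1}$ curves $\Gamma^{n}$ and showing the corresponding solutions still converge, which requires the continuity of $\Gamma$ established in Proposition~\ref{gamma} and stability of the estimates under uniform perturbation of $\Gamma$.
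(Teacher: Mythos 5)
Your proof is correct and matches the paper's intended argument: the paper itself only writes ``Similar to the proof of Theorem \ref{exist-v} and Theorem \ref{uniq-v}, we omit it here,'' and you have faithfully reproduced exactly that two-part strategy---passing to the limit $n\to\infty$ via uniform interior $L^p$ estimates and diagonal extraction for existence, then a comparison-principle argument on the maximal level set for uniqueness, using the $\kappa$-decay and the vanishing of $\tilde F'$ at $-\infty$ to control the behavior at $-\infty$. You also correctly flag the $\Gamma\in C^1$ approximation and the role of the decay bound, which are precisely the points the paper leaves implicit.
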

\begin{proof}
	Similar to the proof of Theorem \ref{exist-v} and Theorem \ref{uniq-v}, we omit it here.
\end{proof}
\begin{theorem}
	\label{exist-h}
	Under Assumption \ref{all}, for any $v\in\mathcal{R}_{1}$, Problem (\ref{eqh}) has a $C^{3,1}(Q^{-}_{T-s})$ $\bigcap C^{2,0}(\overline{Q^{-}_{T-s}})$-solution $h^{s}(x,t)=\int_{-\infty}^{x}k^{s}(z,t)dz$, where $k^{s}$ is a solution to Problem (\ref{eqk}) given in Theorem \ref{exist-k}.
\end{theorem}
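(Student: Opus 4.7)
The strategy mirrors the proof of Theorem \ref{exist-g}, with $k^s$ and Problem (\ref{eqk}) playing the roles that $v$ and Problem (\ref{eqv}) played there. The plan is to define $h^s(x,t):=\int_{-\infty}^{x}k^s(z,t)\,dz$, show that the integral is convergent and that differentiation under the integral is justified, and then integrate Problem (\ref{eqk}) from $-\infty$ to $x$ to recover Problem (\ref{eqh}).

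First I would check that $h^s$ is well-defined. From Theorem \ref{exist-k} we have $0\le k^s(x,t)\le C\kappa(x+\Gamma(t),T-t)$ for $x\le -1-\Gamma(t)$, and by (a) of Assumption \ref{all}, $\kappa(\cdot,T-t)\in L^1((-\infty,-1])$. Together with the boundedness of $k^s$ on the remaining compact portion of $(-\infty,x]$, this gives $h^s(x,t)<\infty$ and in particular $\lim_{x\to-\infty}h^s(x,t)=0$. Moreover $h^s_x=k^s$ and $h^s_{xx}=k^s_x$ inherit the regularity of $k^s$ from Theorem \ref{exist-k}, so $h^s\in C^{2,0}(\overline{Q^-_{T-s}})$ and $h^s_x,h^s_{xx}$ tend to $0$ as $x\to-\infty$ by the same $\kappa$-bound applied to $k^s$ and $k^s_x$ (the latter controlled by differentiating Problem (\ref{eqk}) and giving a probabilistic representation analogous to the one for $v_x$ in the proof of Theorem \ref{exist-g}).

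The key step is the $t$-regularity. I would differentiate (\ref{eqk}) and use It\^{o}'s formula along the auxiliary diffusion appearing in (\ref{Xv}) (adapted to the drift $\mu+\sigma\sigma_x+\Gamma'$) to obtain a probabilistic formula for $k^s_t$ that expresses it as an expectation of terms built from $\tilde{F}'',H_{xx},\mu_{xx}k^s,\sigma\sigma_{xx}+\sigma_x^2+2\mu_x)k^s_x$ and a term coming from $\beta'(T-s-t)H_x$. Using the asymptotic properties in (b) and the inequalities (\ref{c5}), (\ref{c2}), together with the bound $k^s\le C\kappa$, this produces decay of $k^s_t$ as $x\to-\infty$ and membership of $k^s_t$ in $L^1((-\infty,0])$ uniformly locally in $t$. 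Then Fubini/dominated convergence yields $h^s_t(x,t)=\int_{-\infty}^{x}k^s_t(z,t)\,dz$, and by the continuity argument used in Corollary 4.2 of \cite{friedman1975parabolic} (invoked at the end of the proof of Theorem \ref{exist-g}), $h^s_t$ is continuous, so $h^s\in C^{3,1}(Q^-_{T-s})$ after one more spatial differentiation using the interior Schauder regularity of $k^s$.

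Once these decay and regularity statements are in hand, I integrate (\ref{eqk}) from $-\infty$ to $x$: the $k^s_t$ term integrates to $h^s_t$, the $\mu_x k^s$ and drift-type $k^s_x$ terms combine through integration by parts (using the vanishing limits at $-\infty$) into $[\Gamma'+\mu(\cdot+\Gamma(t),T-t)]h^s_x+\frac12\sigma^2(\cdot+\Gamma(t),T-t)h^s_{xx}$, and the $H_x$ term integrates to $H(x+\Gamma(t),T-t)$ (using $\lim_{x\to-\infty}H=0$ from (b) of Assumption \ref{all}). This produces the PDE in (\ref{eqh}). The boundary condition $h^s_x(0,t)=\beta(T-t-s)c(T-t)$ is just the boundary condition of (\ref{eqk}), and the initial condition follows by integrating $k^s(z,0)=\beta(T-s)\tilde{F}'(z+\Gamma(0))$ from $-\infty$ and invoking $\lim_{x\to-\infty}\tilde{F}(x)=0$ from (b) of Assumption \ref{all}.

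The main obstacle is the $L^1$-decay of $k^s_t$ at $-\infty$, since unlike the corresponding step in Theorem \ref{exist-g} (where $v_t\in L^1((-\infty,0])$ was read off directly from (\ref{eqv}) and the $\kappa$-bounds on $v,v_x,v_{xx}$), here the PDE (\ref{eqk}) contains the extra mobile-boundary term $\Gamma'(t)k^s_x$ and the shifted argument $x+\Gamma(t)$, so the decay has to be tracked through the probabilistic representation of $k^s_x$. Everything else is a routine transcription of the arguments in Theorems \ref{exist-v}--\ref{exist-g}.
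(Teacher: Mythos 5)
Your proposal is correct and is essentially the paper's own argument: the paper omits this proof entirely with the remark that it is ``similar to the proof of Theorem \ref{exist-g}'', and your write-up is precisely that transcription (decay and $L^{1}$-integrability of $k^{s},k^{s}_{x},k^{s}_{xx}$ at $-\infty$ via the $\kappa$-bound and a Feynman--Kac representation of the spatial derivative, $k^{s}_{t}\in L^{1}$ read off from the PDE, Fubini plus the Friedman continuity argument for $h^{s}_{t}$, then integration of (\ref{eqk}) from $-\infty$). One small slip: the probabilistic representation you describe (built from $\tilde{F}'',H_{xx},\mu_{xx}k^{s},\dots$) is the one for $k^{s}_{x}$, not $k^{s}_{t}$ — the integrability of $k^{s}_{t}$ then follows by solving for it in (\ref{eqk}) exactly as $v_{t}$ was handled in Theorem \ref{exist-g} — but this does not affect the validity of the argument.
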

\begin{proof}
	The proof is similar to the proof of Theorem \ref{exist-g}, and we omit it here.
\end{proof}

\begin{theorem}
	\label{exist-f}
	Under Assumption \ref{all}, for any $v\in\mathcal{R}_{1}$, {\bf Problem 2} has a solution $f^{s}(x,t)$ given by
	\begin{align*}
	&f^{s}(x,t)=g^{s}(x,T-t),\forall (x,t)\in Q_{T-s},\\
	&g^{s}(x,t)=\left\{
	\begin{aligned}
	&h^{s}(x-\Gamma(t),t),\forall (x,t)\in\overline{W_{T-s}},\\
	&h^{s}(0,t)+\beta(T-s-t)c(T-t)(x-\Gamma(t)),\forall(x,t)\in P_{T-s},
	\end{aligned}
	\right.
	\end{align*}
	where $h^{s}$ is given in Theorem \ref{exist-h}. Moreover, $f(x,t,s):=f^{s}(x,t)$ is in $\mathcal{R}_{0}$.
\end{theorem}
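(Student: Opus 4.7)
The plan is to verify in two stages. First, I check that the candidate $f^{s}$ solves Problem 2. Second, I verify that $f(x,t,s):=f^{s}(x,t)$ satisfies the conditions defining $\mathcal{R}_{0}$.

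For the PDE verification, in $\overline{W_{T-s}}$ I apply the chain rule to $g^{s}(x,t)=h^{s}(x-\Gamma(t),t)$; the $\Gamma'(t)$ contributions cancel, yielding $\tilde{\mathcal{A}}g^{s}(x,t)=\bar{\mathcal{A}}h^{s}(x-\Gamma(t),t)$, so that $(\mathcal{A}f^{s})+\beta(t-s)H=0$ on $\overline{W}$ is immediate from Theorem \ref{exist-h}. On $P_{T-s}$, direct differentiation of the linear formula gives $g^{s}_{x}(x,t)=\beta(T-s-t)c(T-t)$, matching the free boundary condition. For the terminal data at $\tau=0$, the equality $g^{s}(x,0)=\beta(T-s)\tilde{F}(x)$ on $\overline{W_{T-s}}$ is immediate from the initial data for $h^{s}$, while on $P_{T-s}$ the assumed convexity of $a\mapsto F(x-a)+c(T)a$ forces $\Gamma(0)=(F')^{-1}(c(T))$, $\tilde{F}(\Gamma(0))=F(\Gamma(0))$, and $\tilde{F}$ affine with slope $c(T)$ on $[\Gamma(0),\infty)$, so that $g^{s}(x,0)=\beta(T-s)\tilde{F}(\Gamma(0))+\beta(T-s)c(T)(x-\Gamma(0))=\beta(T-s)\tilde{F}(x)$ there.

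For $f\in\mathcal{R}_{0}$, I differentiate the equations (\ref{eqh}) and the purchasing-region formula in the parameter $s$. This produces for $h^{s}_{s}$ an analogous terminal value problem with source $-\beta'(T-s-t)H(x+\Gamma(t),T-t)$, boundary value $h^{s}_{sx}(0,t)=-\beta'(T-t-s)c(T-t)$, and initial data $-\beta'(T-s)\tilde{F}(x+\Gamma(0))$. The same framework as in Theorems \ref{exist-k} and \ref{exist-h} (penalization, bounded-domain approximation, $L^{p}$-estimates, passage to the limit) yields a $C^{2+\alpha,1+\frac{\alpha}{2}}$-solution $h^{s}_{s}$; combined with $\beta\in C^{1}$, this delivers the $C^{1}$ dependence of $f^{s}$ on $s$ and the H\"older regularity of $f_{s}$ and $f_{sx}$ required by $\mathcal{R}_{0}$. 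The decay $\lim_{x\to-\infty}f_{s}(x,T-t,T-t)=0$ follows by dominating by $C\kappa(x,T-t)$ via (\ref{c9})--(\ref{c10}), in the spirit of Theorem \ref{exist-v}.

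The main obstacle is the quantitative bounds $0\le f_{sx}<H_{x}$ and $0\le f_{sxx}\le H_{xx}$. My strategy is to derive the equations obeyed by the spatial derivatives $\psi_{x}$ and $\psi_{xx}$ of $\psi(x,t):=g^{s}_{s}(x,t)|_{s=T-t}$ and apply the comparison principle, exploiting the supermartingale-type inequalities (\ref{c2}), (\ref{c7}), and (\ref{c8}) in Assumption \ref{all}. For instance, (\ref{c7}) says $\mathcal{L}H_{x}\le\beta'(0)H_{x}$ while $\psi_{x}$ satisfies the same equation with source $-\beta'(0)H_{x}$; subtracting and using $-\beta'(0)\tilde{F}'(x)\le H_{x}(x,T)$ from (\ref{c2}) yields $\psi_{x}\le H_{x}$, and the strong maximum principle upgrades this to strict inequality. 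An analogous argument using (\ref{c8}) handles $\psi_{xx}$. The chief delicacy is that $\psi$ is defined piecewise across the continuous free boundary $\Gamma$ and is evaluated along the diagonal $s=T-t$, so its regularity in $(x,t)$ mixes the intrinsic regularity of $h^{s}_{s}$ with its smooth parametric dependence on $s$; the comparison arguments must therefore be carried out compatibly in both regions, and this bookkeeping through the transformation $g^{s}(x,t)=h^{s}(x-\Gamma(t),t)$ and the linear gluing on $P_{T-s}$ is what makes the verification of the full set of $\mathcal{R}_{0}$ conditions technically demanding.
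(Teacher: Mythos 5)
Your proposal is correct and follows essentially the same strategy as the paper: you check that the piecewise formula solves the equations (a step the paper calls trivial, and which you helpfully spell out, in particular the observation that $\tilde{F}$ is affine with slope $c(T)$ on $[\Gamma(0),\infty)$ so the terminal datum also matches on $P_{T-s}$), and then you prove $f\in\mathcal{R}_0$ by differentiating in $s$, establishing the decay at $-\infty$ via $\kappa$-domination from (\ref{c9})--(\ref{c10}), and bounding $f_{sx}$, $f_{sxx}$ by comparison/maximum-principle arguments using (\ref{c7}) and (\ref{c8}) --- although note that the inequality $-\beta'(0)\tilde{F}'(x)\le H_x(x,T)$ you invoke is part of (\ref{c6}), not (\ref{c2}). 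The only substantive divergence is that the paper derives the $C^1$-in-$s$ regularity and the representation of $f_s$ from the probabilistic interpretation established in Step 1 of the proof of Theorem~\ref{verif}, whereas you differentiate the PDE for $h^s$ directly and solve the resulting problem by the same penalization/bounded-domain machinery; both routes are legitimate and lead to the same comparison-principle bounds.
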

\begin{proof}
	It is trivial that $f^{s}(x,t)$ solves {\bf Problem 2}. We only prove $f\in\mathcal{R}_{0}$.
	
	According to the proof of Theorem \ref{verif} and $k^{s}\in C^{2+\alpha,1+\frac{\alpha}{2}}(Q^{-}_{T-s})\bigcap C^{1,0}(\overline{Q^{-}_{T-s}})$, we have 
	\begin{equation*}
	f(x,T-t,\cdot)\in C^{1}([0,T-t]),\forall (x,t)\in Q_{T},
	\end{equation*}
	and
	\begin{displaymath}
	f_{s}(x,t,s)=\mathbb{E}_{x,t}\bigg[-\int_{t}^{T}\beta'(r-s)H(X_{r}^{\hat{\xi}},r)dr-\beta'(T-s)F(X_{T}^{\hat{\xi}})-\int_{t}^{T}\beta'(r-s)c(r)d\hat{\xi}_{r}\bigg].
	\end{displaymath}
	It follows that 
	\begin{displaymath}
	\lim\limits_{x\rightarrow-\infty}f_{s}(x,T-t,T-t)=0,\forall t\in(0,T-s].
	\end{displaymath}
	Note that $u(x,t):=f_{s}(x,t,t)$ satisfies
\begin{equation*}
\left\{
\begin{aligned}
&\mathcal{A}u(x,t)-\beta'(0)H(x,t)=0,\ \forall (x,t)\in \overline{W},\\
&-\beta'(0)c(t)-u_{x}(x,t)=0,\ \forall (x,t)\in P,\\
&u(x,T)=-\beta'(0)\tilde{F}(x).
\end{aligned}
\right.
\end{equation*}
Then $w(x,t):=u_{x}(x,T-t)=f_{sx}(x,T-t,T-t)$ satisfies
\begin{equation}
\label{eq-w}
\left\{
\begin{aligned}
&\mathcal{L}w(x,t)-\beta'(0)H_{x}(x,T-t)=0,\ \forall (x,t)\in \overline{W_{T-s}},\\
&-\beta'(0)c(T-t)-w(x,t)=0,\ \forall (x,t)\in P_{T-s},\\
&w(x,0)=-\beta'(0)\tilde{F}'(x).
\end{aligned}
\right.
\end{equation}
Applying the comparison principle to the equations of $k^{s}_{s}$, we have that $k^{s}_{s}$ is also controlled by $\kappa(x+\Gamma(t),T-t)$ (multiplied by some constant). Then 
\begin{equation*}
f_{s}(x,t,s)=\int_{-\infty}^{x-\Gamma(T-t)}k^{s}_{s}(z,T-t)dz
\end{equation*}
and we have $\lim\limits_{x\rightarrow-\infty}w(x,t)=0$.

Based on (\ref{c7}) of Assumption \ref{all} and the maximum principle, we have $0\le w(x,t)<H_{x}(x,T-t)$. Then $0\le f_{sx}(x,T-t,T-t)<H_{x}(x,T-t)$ for all $(x,t)\in \overline{Q_{T-s}}$.

Differentiating the equation for $f_{sx}$ and using  It{\^o}'s formula, we have, for $x\le\Gamma(t)$,
\begin{align*}
\!f_{sxx}\!(x\!,\!T\!-\!t\!,\!T\!-\!t)\!=\!\mathbb{E}_{x\!,\!T\!-\!t}\!\bigg\{\!-\!\beta'\!(\!0\!)\tilde{F}''\!(\!X^{v}_{T}\!)\!+\!\int_{T\!-\!t}^{T}\!\Big[\!-\!\beta'\!(\!0\!)H_{xx}\!(\!X^{v}_{r}\!,\!r\!)\!+\!\mu_{xx}\!(\!X^{v}_{r}\!,\!r\!)w(\!X^{v}_{r}\!,\!T\!-\!r\!)\!&\\\!+\!(\sigma(X^{v}_{r},r)\sigma_{xx}(X^{v}_{r},r)\!+\!\sigma_{x}^{2}(X^{v}_{r},r)\!+\!2\mu_{x}(X^{v}_{r},r))f_{sxx}(X^{v}_{r},T\!-\!r,T\!-\!r)\Big]dr\bigg\}.&
\end{align*}
	where $X^{v}$ follows (\ref{Xv}).
By the boundary condition, we have $f_{sxx}(\Gamma(t),T\!-\!t,T\!-\!t)=0$.

We claim that $f_{sxx}(x,T-t,T-t)\ge 0$ in $Q_{T-s}$. Otherwise, for each $t\in(0,T-s]$, there exists
\begin{equation*}
x^{t}_{0}=\inf\{x\le \Gamma(t)|f_{sxx}(x,T-t,T-t)\ge 0,\forall x\in(x^{t}_{0},\Gamma(t)]\}>-\infty.
\end{equation*}
Then $f_{sxx}(x^{t}_{0},T-t,T-t)=0$. On the other hand, using (\ref{c2}),(\ref{c134}) and (\ref{c5}) of Assumption \ref{all}, we have a contradiction that $f_{sxx}(x^{t}_{0},T-t,T-t)>0$.

 Hence, $f_{sxx}(x,T-t,T-t)\ge 0$ for all $(x,t)\in Q_{T-s}$.
 Then
\begin{displaymath}
\!f_{sxx}\!(x\!,\!T\!-\!t\!,\!T\!-\!t)\!\le\!\mathbb{E}_{x\!,\!T\!-\!t}\!\bigg\{\!-\!\beta'\!(\!0\!)\tilde{F}''\!(\!X^{v}_{T}\!)\!+\!\int_{T\!-\!t}^{T}\!\Big[\!-\!\beta'\!(\!0\!)H_{xx}\!(\!X^{v}_{r}\!,\!r\!)\!+\!\mu_{xx}\!(\!X^{v}_{r}\!,\!r\!)w(\!X^{v}_{r}\!,\!T\!-\!r\!)\!\Big]\!dr\!\bigg\}\!.\!
\end{displaymath}
 and 
 \begin{displaymath}
 \lim\limits_{x\rightarrow-\infty}f_{sxx}(x,T-t,T-t)=0,\forall t\in[0,T-s].
 \end{displaymath}
 By (\ref{c8}) of Assumption \ref{all} and the maximum principle, we have 
 \begin{displaymath}
 f_{sxx}(x,T-t,T-t)\le H_{xx}(x,T-t),\forall(x,t)\in Q_{T-s}.
 \end{displaymath}
 Thus, $f\in\mathcal{R}_{0}$.
\end{proof}

\subsection{Existence of solutions to the original problem}\label{fixpoint}
In this subsection, we use the fixed point method to construct a solution to the HJB equations (\ref{HJB}).

Theorems \ref{exist-k}-\ref{exist-f} uniquely define a mapping $\mathcal{Y}_{2}$ that maps any $v\in\mathcal{R}_{1}$ to a solution of {\bf Problem 2} in $\mathcal{R}_{0}$. Let $\mathcal{Y}=\mathcal{Y}_{1}\circ\mathcal{Y}_{2}$, then $\mathcal{Y}$ is a mapping from $\mathcal{R}_{1}$ into itself.

\begin{theorem}
\label{exist-ori}
	The mapping $\mathcal{Y}$ has a fixed point $v$. As a corollary, $V(x,t):=\int_{-\infty}^{x}v(z,T-t)dz$ and $f^{s}(x,t)=(\mathcal{Y}_{2}v)(x,t)$ solve the HJB system (\ref{HJB}).
\end{theorem}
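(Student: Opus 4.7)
The plan is to invoke Schauder's fixed point theorem for the map $\mathcal{Y}:\mathcal{R}_1\to\mathcal{R}_1$. I will view $\mathcal{R}_1$ as a subset of the Fr\'echet space of continuous functions on $\overline{Q_T}$ equipped with the topology of uniform convergence on each bounded cylinder $Q^{(m)}$ introduced in the proof of Theorem~\ref{exist-v}. Convexity of $\mathcal{R}_1$ is clear: the pointwise bounds $0\le v\le C\kappa$ on $(-\infty,-1]\times[0,T]$, $v\le c(T-\cdot)$ on $Q_T$, and the monotonicity $v_x\ge 0$ are all preserved by convex combinations, while $W^{2,1}_p(Q_T)\cap C^{1,0}(\overline{Q_T})$ is a vector space. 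Non-emptiness is guaranteed by Theorem~\ref{exist-v}.

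The compactness step uses the a priori estimates built up through Subsections~\ref{subproblem1}--\ref{subproblem2}. Given $v\in\mathcal{R}_1$, Theorem~\ref{exist-f} produces $f^s:=\mathcal{Y}_2(v)\in\mathcal{R}_0$ with bounds on $f_{sx}$ and $f_{sxx}$ depending only on the data through Assumption~\ref{all}; in particular $H_x-f_{sx}$ is pointwise bounded independently of $v$. Feeding this into the interior $L^p$ theory for (\ref{eqv}) on each $Q^{(m+1)}$, exactly as in the proof of Theorem~\ref{exist-v}, yields a $v$-independent bound on $\|\mathcal{Y}(v)\|_{W^{2,1}_p(Q^{(m)})}$. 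Sobolev embedding then produces a uniform $C^{1+\alpha,(1+\alpha)/2}(\overline{Q^{(m)}})$ bound, and Arzel\`a--Ascoli combined with a diagonal extraction gives precompactness of $\mathcal{Y}(\mathcal{R}_1)$. Taking $K$ to be the closed convex hull of $\mathcal{Y}(\mathcal{R}_1)$ inside $\mathcal{R}_1$ produces a convex compact set with $\mathcal{Y}(K)\subset K$.

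The main obstacle is continuity of $\mathcal{Y}$. Suppose $v_n\to v$ in our topology. I must track how the perturbation propagates through (i) the free boundary $\Gamma_n(t)=\inf\{x:c(T-t)=v_n(x,t)\}$, (ii) the solution $k^{s}_n$ of the shifted boundary-value problem (\ref{eqk}) posed on the $\Gamma_n$-dependent strip, and (iii) the recomposition producing $f_n=\mathcal{Y}_2(v_n)$ and $\mathcal{Y}(v_n)=\mathcal{Y}_1(f_n)$. Convergence $\Gamma_n\to\Gamma$ locally uniformly is delicate because it relies on non-degeneracy of $v_x$ at the free boundary; I would argue it using $v_x\ge 0$ together with the $C^1$-boundary approximation already exploited between (\ref{eqh}) and (\ref{eqk}). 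Once this is in hand, the uniform estimates above let me extract a subsequence along which $k^{s}_n$, $f_n$ and $\mathcal{Y}(v_n)$ converge in the appropriate topologies, and the uniqueness statements in Theorem~\ref{uniq-v} and Theorem~\ref{exist-k} identify the limits as $k^{s}$, $\mathcal{Y}_2(v)$ and $\mathcal{Y}(v)$, so that the whole sequence converges and $\mathcal{Y}$ is continuous.

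Once a fixed point $v=\mathcal{Y}(v)$ is produced, the corollary is nearly tautological. Setting $f^s=\mathcal{Y}_2(v)$ we have $v=\mathcal{Y}_1(f^s)$, so by Theorem~\ref{exist-g} and Corollary~\ref{exist-p1} the function $V(x,t):=\int_{-\infty}^{x}v(z,T-t)\,dz$ solves \textbf{Problem~1} with right-hand side generated by this $f^s$. But by construction $f^s$ is a solution of \textbf{Problem~2} with the waiting/purchasing regions $W,P$ read off from $V$ via (\ref{feed-w})--(\ref{feed-p}). Since the five equations composing (\ref{HJB}) are precisely the union of those defining \textbf{Problems~1} and~\textbf{2}, the pair $(V,\{f^s\}_{s\in[0,T)})$ solves the full HJB system.
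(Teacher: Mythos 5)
Your overall architecture coincides with the paper's: Schauder's fixed point theorem on the convex set $\mathcal{R}_1$ in a Fr\'echet-type topology built from the exhausting cylinders $Q^{(m)}$, with compactness of $\mathcal{Y}(\mathcal{R}_1)$ obtained exactly as in the paper, namely from interior $W^{2,1}_p$ estimates whose right-hand sides are $v$-independent because $\mathcal{R}_0$ forces $0\le f_{sx}<H_x$, followed by embedding and a diagonal extraction. (The paper works with the local $C^{1+\alpha,(1+\alpha)/2}$ metric rather than mere local uniform convergence, and applies Schauder directly to $\mathcal{R}_1$ rather than to the closed convex hull of the image; these are cosmetic differences.) The genuine divergence is in the continuity step. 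You propose a soft argument: extract a convergent subsequence of $\mathcal{Y}(v_n)$ using the a priori bounds, pass to the limit in the equations, and identify the limit via the uniqueness statements of Theorems \ref{uniq-v} and \ref{exist-k}. The paper instead proves a quantitative stability estimate, bounding $\Vert v^{(k)}-v^{*}\Vert_{W^{2,1}_p(Q^{(m)})}$ by $\Vert f^{(k)}_{sx}-f^{*}_{sx}\Vert_{L^p}+\Vert v^{(k)}-v^{*}\Vert_{L^p}$ through the penalized problems, and then kills both terms with maximum-principle arguments applied to $w^{(k)}-w^{*}$ and $v^{(k)}-v^{*}$ on the sets where they differ by more than $\epsilon$. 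Your route is conceptually cleaner but buys its simplicity at the cost of the hardest point: the uniqueness in Theorem \ref{exist-k} is stated for a \emph{fixed} boundary $\Gamma$, so identifying the limit of $f^{(k)}=\mathcal{Y}_2 u^{(k)}$ as $\mathcal{Y}_2 u^{*}$ requires stability of Problem (\ref{eqk})--(\ref{eqh}) under perturbation of the free boundary, i.e.\ precisely the content of the paper's $L^\infty$ estimate on $w^{(k)}-w^{*}$. You flag the convergence $\Gamma_n\to\Gamma$ as delicate and defer it to an unproven non-degeneracy of $v_x$ at the free boundary, and you do not carry out the domain-stability step at all; as written, the continuity argument is a plan rather than a proof at exactly the place where the paper expends its effort. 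The final paragraph identifying the fixed point with a solution of (\ref{HJB}) is correct and matches the paper.
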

\begin{proof}
	We apply Schauder's fixed point theorem to find a fixed point of $\mathcal{Y}$ on $\mathcal{R}_{1}$. Equip $\mathcal{R}_{1}$ with the $\Vert\cdot\Vert_{1+\alpha,Q_{T}}$ norm given by
 \begin{equation*}
\Vert v\Vert_{1+\alpha,Q_{T}}:=\sum\limits_{m=1}^{\infty}\frac{1}{2^{m}}\min\{1,\Vert v\Vert_{C^{1+\alpha,\frac{1+\alpha}{2}}(\overline{Q^{(m)}})}\}.
\end{equation*}
where $Q^{(m)}=[a^{(m)},b^{(m)}]\times(t^{(m)},T],\forall m\ge 1$ is the same as in Theorem \ref{exist-v}. By definition, $\mathcal{R}_{1}$ is convex and closed. Hence the remaining proof boils down to showing that $\mathcal{Y}(\mathcal{R}_{1})$ is compact and that $\mathcal{Y}$ is continuous.

{\bf Step 1:} we show that $\mathcal{Y}(\mathcal{R}_{1})$ is compact.\\
Suppose  that $\{v^{(k)}\}_{k\ge 1}$ is a sequence in $\mathcal{R}_{1}$, we only need to show that there is a sub-sequence of $\{\mathcal{Y}v^{(k)}\}_{k\ge 1}$ converging in $\Vert\cdot\Vert_{C^{1+\alpha,\frac{1+\alpha}{2}}(\overline{Q^{(m)}})}$ for any $m\ge 1$.

Using interior $L^{p}$ estimate, we have that for any $v\in\mathcal{Y}_{1}(\mathcal{R}_{0})$,
	\begin{equation*}
	\Vert v\Vert_{W^{2,1}_{p}(Q^{(m)})}
	\le\tilde{C}\Big[\Vert H_{x}(x,T-t)\Vert_{L^{p}(Q^{(m+1)})}+1+\Vert c(T-t)\Vert_{L^{p}(Q^{(m+1)})}\Big],
	\end{equation*}
where the right-hand side is independent of the choice of $v$. Hence for $m=1$, there is a sub-sequence $\{\mathcal{Y}v^{(k_{j}^{(1)})}\}_{j\ge 1}$ of $\{\mathcal{Y}v^{(k)}\}_{k\ge 1}$ that converges to some $v^{\langle 1\rangle}$ weakly in $W^{2,1}_{p}(Q^{(1)})$, strongly in $C^{1+\alpha,\frac{1+\alpha}{2}}(\overline{Q^{(1)}})$. Then for $m=2$, there is a sub-sequence $\{\mathcal{Y}v^{(k_{j}^{(2)})}\}_{j\ge 1}$ of $\{\mathcal{Y}v^{(k_{j}^{(1)})}\}_{j\ge 1}$ that converges to some $v^{\langle 2\rangle}$ weakly in $W^{2,1}_{p}(Q^{(2)})$, strongly in $C^{1+\alpha,\frac{1+\alpha}{2}}(\overline{Q^{(2)}})$. Repeatedly, we can find $\{\mathcal{Y}v^{(k_{j}^{(m)})}\}_{j\ge 1}$ and $v^{\langle m\rangle}$ for any $m\ge 1$ satisfying the corresponding properties. Clearly, $v^{\langle m\rangle)},m\ge 1$ are consistent. Hence, we can define $v(x,t)=v^{\langle m\rangle}(x,t), \forall(x,t)\in Q^{(m)},m\ge 1$ and we have $v\in\mathcal{R}_{1}$. Then the diagonal sequence $\{\mathcal{Y}v^{(k_{j}^{(j)})}\}_{j\ge 1}$ is a sub-sequence of $\{\mathcal{Y}v^{(k)}\}_{k\ge 1}$ converging in $\Vert\cdot\Vert_{C^{1+\alpha,\frac{1+\alpha}{2}}(\overline{Q^{(m)}})}$ for any $m\ge 1$

{\bf Step 2:} We show that $\mathcal{Y}$ is continuous.\\
 Suppose that $u^{(k)}\rightarrow u^{*}$ is a converging sequence in $\mathcal{R}_{1}$, by Embedding theorem, we only need to prove that  $\mathcal{Y}u^{(k)}$ converges to $\mathcal{Y}u^{*}$ in $W^{2,1}_p(Q^{(m)})$ for each $m\ge 1$.

Define
\begin{align*}
&f^{(k)}=\mathcal{Y}_{2}u^{(k)},\ f^{*}=\mathcal{Y}_{2}u^{*},\ v^{(k)}=\mathcal{Y}_{1}f^{(k)},\ v^{*}=\mathcal{Y}_{1}f^{*},\\
& \Gamma^{(k)}(t)=\inf\{x|c(T-t)-u^{(k)}(x,t)=0\},\ \Gamma^{*}(t)=\inf\{x|c(T-t)-u^{*}(x,t)=0\}
\end{align*}
Let $[v^{(k)}]^{\epsilon,n},[v^{*}]^{\epsilon,n}$ be solutions to (\ref{vpb}) that converges to $v^{(k)},v^{*}$ in $W^{2,1}_{p}(Q^{(m)})$ for any $m\ge 1$, see Theorem \ref{exist-v}. Using interior $L^{p}$ estimate, we obtain
\begin{align*}
&\Vert v^{(k)}-v^{*}\Vert_{W^{2,1}_{p}(Q^{(m)})}\\
&\le C\bigg\{\Big\Vert f^{(k)}_{sx}(x,T-t,T-t)-f^{*}_{sx}(x,T-t,T-t)\Big\Vert_{L^{p}(Q^{(m+1)})}\\
&\!+\!\Big\Vert \alpha_{\epsilon,n}\big(c(T\!-\!t)\!-\![v^{(k)}]^{\epsilon,n}(x,T\!-\!t)\big)\!-\!\alpha_{\epsilon,n}\big(c(T\!-\!t)\!-\![v^{*}]^{\epsilon,n}(x,T\!-\!t)\big)\Big\Vert_{L^{p}(Q^{(m\!+\!1)})}\\
&\!+\!\Big\Vert [v^{(k)}]^{\epsilon,n}\!-\![v^{*}]^{\epsilon,n}\Big\Vert_{L^{p}(Q^{(\!m\!+\!1\!)})}\bigg\}\!+\!\Big\Vert v^{(k)}\!-\![v^{(k)}]^{\epsilon,n}\Big\Vert_{W^{2\!,\!1}_{p}(Q^{(\!m\!)})}\!+\!\Big\Vert v^{*}\!-\![v^{*}]^{\epsilon,n}\Big\Vert_{W^{2\!,\!1}_{p}(Q^{(\!m\!)})}.
\end{align*}
Recalling the definition of $\alpha_{\epsilon,n}$ and letting $\epsilon\rightarrow 0,n\rightarrow\infty$, we obtain
\begin{align}
\label{53eq1}
&\Vert v^{(k)}-v^{*}\Vert_{W^{2,1}_{p}(Q^{(m)})}\\
&\le \!C\!\bigg\{\!\Big\Vert f^{(\!k\!)}_{sx}(\!x\!,\!T\!-\!t\!,\!T\!-\!t\!)\!-\!f^{*}_{sx}(\!x\!,\!T\!-\!t\!,\!T\!-\!t\!)\Big\Vert_{L^{p}(Q^{(m\!+\!1)})}\!+\!\Big\Vert v^{(\!k\!)}\!-\!v^{*}\Big\Vert_{L^{p}(Q^{(m\!+\!1)})}\!\bigg\}\!.\notag
\end{align}
Note that $u^{(k)}\rightarrow u^{*}$ in the norm $\Vert \cdot\Vert_{1+\alpha,Q_{T}}$, we obtain that 
\begin{equation*}
\lim\limits_{k\rightarrow\infty}\Gamma^{(k)}(t)=\Gamma^{*}(t),\forall t\in[0,T].
\end{equation*}
Define
\begin{equation*}
w^{(k)}(x,t)=f^{(k)}_{sx}(x,T-t,T-t),\ w^{*}(x,t)=f^{*}_{sx}(x,T-t,T-t).
\end{equation*}
Then, for any $\epsilon>0$, for $k$ sufficiently large, the region $\big\{(x,t)\in Q_{T-s}\big||w^{(k)}(x,t)-w^{*}(x,t)|\ge\epsilon\big\}$ is bounded and belongs to $\big\{(x,t)\in Q_{T}\big|x<\Gamma^{(k)}(t),x<\Gamma^{*}(t)\big\}$. Applying the maximum principle to $\mathcal{L}$ in the region $\big\{(x,t)\in Q_{T}\big||w^{(k)}(x,t)-w^{*}(x,t)|>\epsilon\big\}$, we deduce that $|w^{(k)}(x,t)-w^{*}(x,t)|=\epsilon$ in the above region, and thus
\begin{equation*}
\lim\limits_{k\rightarrow\infty}\Vert w^{(k)}(x,t)-w^{*}(x,t)\Vert_{L^{\infty}(Q_{T})}=0.
\end{equation*}
Then we deduce that the first term in (\ref{53eq1}) vanishes as $k\rightarrow\infty$.

For the second term in (\ref{53eq1}), let $\epsilon>0$ as before, then for $k$ sufficiently large, we have $\Vert w^{(k)}(x,t)-w^{*}(x,t)\Vert_{L^{\infty}(Q_{T})}<\epsilon\min\limits_{(x,t)\in Q^{(m+1)}}\big[-\mu_{x}(x,t)\big]$. Then $\mathcal{L}v^{*}\le \mathcal{L}(v^{(k)}-\epsilon)$ if $v^{(k)}\ge v^{*}+\epsilon$ and $\mathcal{L}v^{(k)}\le \mathcal{L}(v^{*}-\epsilon)$ if $v^{*}\ge v^{(k)}+\epsilon$. Applying the maximum principle to $\mathcal{L}$ in $\big\{(x,t)\in Q^{(m+1)}\big||v^{(k)}(x,t)-v^{*}(x,t)|\ge\epsilon\big\}$, we deduce that $|v^{(k)}(x,t)-v^{*}(x,t)|=\epsilon$ in the above region, and thus
\begin{equation*}
\lim\limits_{k\rightarrow\infty}\Vert v^{(k)}(x,t)-v^{*}(x,t)\Vert_{L^{\infty}(Q^{(m+1)})}=0.
\end{equation*}
Then the second term in (\ref{53eq1}) vanishes as $k\rightarrow\infty$ and thus $\mathcal{Y}$ is continuous because $\Vert v^{(k)}-v^{*}\Vert_{W^{2,1}_{p}(Q^{(m)})}\rightarrow 0$ as $k\rightarrow\infty$ for any $m\ge 1$.
\end{proof}
 
 Finally, we have the existence of equilibria.
\begin{corollary}
	Assuming $\mu, \sigma, F, H, c$ are independent of $y$, $\sigma$ is bounded and Assumption \ref{all}, then there exists an equilibrium.
\end{corollary}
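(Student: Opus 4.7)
The proof is a verification exercise assembling the pieces developed in Section \ref{exi} and applying the verification theorem of Section \ref{suf}. The plan is as follows.

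First, I would invoke Theorem \ref{exist-ori} to obtain a fixed point $v \in \mathcal{R}_1$ of $\mathcal{Y} = \mathcal{Y}_1 \circ \mathcal{Y}_2$. Setting $V(x,t) := \int_{-\infty}^{x} v(z, T-t)\, dz$ and $f^s(x,t) := (\mathcal{Y}_2 v)(x,t)$, Theorem \ref{exist-ori} yields a classical solution to the HJB system (\ref{HJB}), with $f \in \mathcal{R}_0$ by Theorem \ref{exist-f}. The waiting and purchasing regions are then defined by (\ref{feed-w})--(\ref{feed-p}), which under the $y$-independent setting reduce to $W = \{(x,t) : x < \Gamma(T-t)\}$ and $P = \{(x,t) : x \ge \Gamma(T-t)\}$, where $\Gamma \in C^0([0,T])$ by Proposition \ref{gamma}.

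Next I would verify the four hypotheses of Theorem \ref{verif} for $\hat{\Xi} := (W,P)$. Condition (1) on regularity of $V$ and $f^s$ is immediate from Corollary \ref{exist-p1}, Theorem \ref{exist-h}, and Theorem \ref{exist-f}. Condition (2) is precisely the HJB system (\ref{HJB}), which the constructed pair satisfies. For condition (4), using $v(x,t) \le c(T-t)$, the bounds $0 \le f^s_x \le \beta(T-t-s)c(T-t)$ from Theorem \ref{exist-k}, and the assumed boundedness of $\sigma$, the integrand $[\varphi_x(X^\xi_r, r, \xi_r)\sigma(X^\xi_r, r, \xi_r)]^2$ is uniformly bounded for $\varphi = V$ and $\varphi = f^s$, which immediately gives (\ref{v9}).

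The main obstacle is verifying condition (3), namely that $\hat{\Xi}$ is an admissible singular control law in the sense of Definition \ref{adml}. For existence and uniqueness of the strong solution to the Skorohod reflection problem (\ref{dynamic1}), I would exploit the explicit structure of $P$: since $\Gamma$ is continuous and the reflection is along the $\xi$-direction (which corresponds to $x \mapsto x - a$, $y \mapsto y + a$), the problem is a one-dimensional oblique reflection against the moving boundary $\Gamma(T-t)$, for which standard well-posedness results apply once the coefficients $\mu, \sigma$ are assumed sufficiently regular (this is already built into Assumption \ref{all} via the H\"older regularity of $\mu_x, \sigma_x$). The integrability conditions in Definition \ref{adml}(b) follow from the finiteness of $V(x,t) = J(x,t,y;\hat{\xi})$ established through Theorem \ref{verif}'s Step 2 together with the a priori bounds on $V_x$ and $f^s_x$. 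Property (c) of Definition \ref{adml}, that $\sum_{r \in (\tau, \tau + h)} \Delta\xi_r = o(h)$ a.s., follows from the fact that $\xi$ has at most one jump at each time (immediate jumps only occur to bring the state back into $\overline{W}$, after which no further instantaneous jump is needed since $\Gamma$ is continuous); between jumps $\xi$ grows only via its local-time component on the boundary, which is continuous and thus has vanishing increments.

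Having verified all four conditions of Theorem \ref{verif}, I would conclude that $\hat{\Xi}$ is an equilibrium singular control law, completing the proof of the corollary.
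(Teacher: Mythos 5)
Your proposal is correct and follows exactly the paper's route: the paper's own proof is the one-line "follows from Theorem \ref{exist-ori} and Theorem \ref{verif}," and you have simply made explicit the verification of conditions (1)--(4) of Theorem \ref{verif} that the paper leaves implicit, including correctly identifying that the boundedness of $\sigma$ together with the a priori bounds $0\le V_x\le c$ and $0\le f^s_x\le\beta(T-t-s)c(T-t)$ is what delivers the integrability condition (\ref{v9}). The only place you go beyond the paper is the admissibility check (condition (3)), where your sketch via reflection at the continuous moving boundary $\Gamma$ is reasonable and no less rigorous than what the paper provides.
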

\begin{proof}
The proof follows from Theorem \ref{exist-ori} and Theorem \ref{verif}.
\end{proof}

\section{Conclusion}
\label{concl}
This work studies a time-inconsistent singular control problem where time-inconsistency is caused by non-exponential discount. Both the state dynamic and the objective functional are in general forms. We propose a novel definition of equilibrium. The definition in our paper provides a general and workable framework for further studies. Then, we establish both sufficient and necessary conditions to characterize the equilibrium. In particular, the equilibrium corresponds to a coupled PDE system with non-linearity, non-locality and free boundaries. Finally, we give a theoretical existence result of equilibrium solutions, which is new in the context of time-inconsistent singular control. Technically, we decouple the system into two sub-problems and apply Schauder's fixed point theorem. There are still some open problems. Further generalization of the existence result where $y$ is involved may require PDE theory beyond parabolic type equations. Also, the existence result only focuses on ``regular" equilibrium with equilibrium value function being $C^{2,1,1}$. From this perspective, any general non-existence result not restricted to regular equilibrium is beyond the PDE approach of this paper. We leave it as an open problem. Besides, uniqueness results of the equilibrium are also left for future research.

\section*{Acknowledgments}
The authors acknowledge the support from the National Natural Science Foundation of China (Grant No.12271290, and No.11871036). The authors also thank the members of the group of Actuarial Sciences and Mathematical Finance at the Department of Mathematical Sciences, Tsinghua University for their feedback and useful conversations. 

\bibliographystyle{plainnat}
\bibliography{references}
\end{document}